\documentclass{amsart}
\usepackage[english]{babel}
\usepackage[latin1]{inputenc}
\usepackage[dvips,final]{graphics}
\usepackage{amsmath,amsfonts,amssymb,amsthm,amscd,array,
stmaryrd,mathrsfs,mathdots,epigraph}
\usepackage[makeroom]{cancel}
\usepackage{pstricks}
\usepackage[all]{xy}
\usepackage{url}
\usepackage{multirow, blkarray}
\usepackage{booktabs}
\usepackage{caption}
\usepackage{subcaption}

\usepackage{blindtext}
\usepackage{textcomp}
\usepackage[final]{epsfig}
\usepackage{color}
\usepackage{mathabx}

\vfuzz2pt 
\hfuzz2pt 
\setlength{\textwidth}{15truecm}
\setlength{\hoffset}{-1truecm}
\setlength{\textheight}{20.8truecm} 
\setlength{\voffset}{-.1truecm}


\theoremstyle{plain}
\newtheorem{thm}{Theorem}[section]
\newtheorem{lem}[thm]{Lemma}
\newtheorem{prop}[thm]{Proposition}
\newtheorem{cor}[thm]{Corollary}

\theoremstyle{definition}
\newtheorem*{defn}{Definition}
\newtheorem*{rem}{Remark}
\newtheorem*{rems}{Remarks}
\newtheorem*{ex}{Example}
\newtheorem*{exs}{Examples}
\newtheorem*{pb}{Problem}

\numberwithin{figure}{section}

\let\ssection=\section
\renewcommand{\section}{\setcounter{equation}{0}\ssection}



\newcommand{\R}{\mathbb{R}}
\newcommand{\Z}{\mathbb{Z}}

\newcommand{\N}{\mathbb{N}}
\newcommand{\bF}{\mathbb{F}}

\newcommand{\bP}{\mathbb{P}}
\newcommand{\Q}{\mathbb{Q}}



\newcommand{\cC}{\mathcal{C}}
\newcommand{\cD}{\mathcal{D}}



\newcommand{\Id}{\mathrm{Id}}
\newcommand{\SL}{\mathrm{SL}}
\newcommand{\PSL}{\mathrm{PSL}}



\def\thup{\mathop{\rm th}\nolimits}
\def\vecm{\bar{m}}
\def\vecv{\bar{v}}
\def\vecw{\bar{w}}

\begin{document}

\title[Quiddities of dissections]
{Quiddities of polygon dissections\\
and the Conway-Coxeter frieze equation}

\author{Charles H.\ Conley}
\address{
Charles H.\ Conley,
Department of Mathematics 
\\University of North Texas 
\\Denton TX 76203, USA} 
\email{conley@unt.edu}

\author{Valentin Ovsienko}
\address{
Valentin Ovsienko,
Centre National de la Recherche Scientifique,
Laboratoire de Math\'ematiques de Reims, UMR9008 CNRS,
Universit\'e de Reims Champagne-Ardenne,
U.F.R. Sciences Exactes et Naturelles,
Moulin de la Housse - BP 1039,
51687 Reims cedex 2,
France}
\email{valentin.ovsienko@univ-reims.fr}

\keywords{Kirkman-Cayley, Catalan, modular group, frieze patterns, toric surfaces}

\subjclass{Primary 05A15; Secondary 05E16, 05A16}

\begin{abstract}
We study a $2\times2$ matrix equation arising naturally
in the theory of Coxeter frieze patterns.
It is formulated in terms of the generators
of the group $\mathrm{PSL}(2,\mathbb{Z})$ and
is closely related to continued fractions.
It appears in a number of different areas,
for example, toric varieties.
We count its positive solutions,
obtaining a series of integer sequences,
some known and some new.
This extends classical work of Conway and Coxeter proving
that the first of these sequences is the Catalan numbers.
\end{abstract}

\thanks{\noindent
C.H.C.\ was partially supported by a Simons Foundation Collaboration Grant, 519533.\\
\indent V.O.\ was partially supported by the ANR project PhyMath, ANR-19-CE40-0021.
}

\maketitle

\hfill \textit{To the memory of John Conway}

\thispagestyle{empty}

\tableofcontents

\section{Introduction}

Consider the $2\times2$ matrix equation
\begin{equation}
\label{SLMatEqInt}
\begin{pmatrix}
a_1 & -1 \\ 1 & \phantom{-}0
\end{pmatrix}
\begin{pmatrix}
a_2 & -1 \\ 1 & \phantom{-}0
\end{pmatrix}
\cdots
\begin{pmatrix}
a_N & -1 \\ 1 & \phantom{-}0
\end{pmatrix}
= \pm \Id,
\end{equation}
where the indeterminates $(a_1,a_2,\ldots,a_N)$ are integers.

\begin{defn}
\begin{itemize}
\item
We shall refer to \eqref{SLMatEqInt} as the \textit{Conway-Coxeter equation.}
\smallbreak \item
We shall refer to a solution of \eqref{SLMatEqInt} as \textit{positive} if all of the integers $a_i$ are positive.
\end{itemize}
\end{defn}

The Conway-Coxeter equation arises in the theory of Coxeter frieze patterns~\cite{Cox}
and has been studied in several articles, such as \cite{BR, CoOv2, Sop, SVRS, Val}.
It is also relevant for several other fields in algebra, geometry, and combinatorics,
including the theory of two-dimensional toric varieties; see~\cite{Ful}, Section~2.5.
The project of enumerating all positive solutions
of~\eqref{SLMatEqInt} was begun in~\cite{Val}.
The main purpose of this article is to complete it.

Let us write $T$ for the sum of the integers $a_i$:
\begin{equation} \label{MatEq Total Sum}
T(a_1, \ldots, a_N) := a_1+a_2+\cdots+a_N.
\end{equation}
This quantity is an important characteristic of the collection of
matrix factors in~\eqref{SLMatEqInt}.  As discussed in~\cite{Val},
it has several combinatorial and dynamical interpretations.
We will refer to it as the \textit{total sum.}
In addition to enumerating all positive solutions of~\eqref{SLMatEqInt},
we also enumerate those with any given fixed value of $T$.

It turns out that for any positive solution of~\eqref{SLMatEqInt}, 
the total sum necessarily satisfies
\begin{equation}
\label{AnyA}
T = 3(N - 2) - 6k
\end{equation}
for some non-negative integer $k \leq \frac{1}{3}N - 1$.
It can be shown that the right side
of~\eqref{SLMatEqInt} is $(-1)^{k+1} \Id$.
The Conway-Coxeter solutions arising from Coxeter's
frieze patterns correspond to $k=0$ (see Section~\ref{CoCoS}).
The case $k=1$ is also of particular significance (see Section~\ref{ProjPlane}).

The Conway-Coxeter equation has a certain ubiquity.
Various combinatorial and geometric problems can be formulated
in terms of it, with differing conditions on the indeterminates $a_i$.
The positivity condition leads to interesting combinatorics,
bearing out the general principle that any
naturally occurring sequence of positive integers
must enumerate some concrete set of objects.

\subsection{Hirzebruch-Jung continued fractions}
There is a close relation between \eqref{SLMatEqInt} and
the Hirzebruch-Jung continued fraction \cite{Ful, Hir},
\begin{equation*}
\llbracket{}a_1,\ldots,a_N\rrbracket{}\;:=\;
a_1 - \cfrac{1}{a_2 - \cfrac{1}{\ddots - \cfrac{1}{a_N} } } \ \ ,
\end{equation*}
sometimes called in the literature the ``negative'', ``minus'',
or ``reversal'' continued fraction~\cite{Kat}.
It is classical that the matrix product in \eqref{SLMatEqInt}
encodes this continued fraction; see for example~\cite{MGO}.
To be precise, consider the tridiagonal determinant
\begin{equation*}
K_N(a_1,\ldots,a_N):=
\det\left(
\begin{array}{cccccc}
a_1&1&&&\\[4pt]
1&a_{2}&1&&\\[4pt]
&\ddots&\ddots&\!\!\ddots&\\[4pt]
&&1&a_{N-1}&\!\!\!\!\!1\\[6pt]
&&&\!\!\!\!\!1&\!\!\!\!a_N
\end{array}
\right),
\end{equation*}
known as Euler's \textit{continuant.}
The Hirzebruch-Jung continued fraction
is a quotient of two continuants,
$$
\llbracket{} a_1, \ldots, a_N \rrbracket =
\frac{K_N(a_1,\ldots,a_N)}{K_{N-1}(a_2,\ldots,a_N)},
$$
while the matrix product in \eqref{SLMatEqInt} is
\begin{equation*}
\begin{pmatrix}
K_N(a_1,\ldots,a_N)&-K_{N-1}(a_1,\ldots,a_{N-1})\\[6pt]
K_{N-1}(a_2,\ldots,a_N)&-K_{N-2}(a_2,\ldots,a_{N-1})
\end{pmatrix}.
\end{equation*}

In fact, \eqref{SLMatEqInt} is equivalent to the system of integer equations
\begin{eqnarray*}
K_{N-1}(a_2,\ldots,a_N) = 0, \qquad
K_{N-1}(a_1,\ldots,a_{N-1}) = 0.
\end{eqnarray*}
These conditions imply
$K_N(a_1,\ldots,a_N) = K_{N-2}(a_2,\ldots,a_{N-1}) = \pm1$,
as the determinant of \eqref{SLMatEqInt} is necessarily~$1$.
In this situation many authors speak of
$\llbracket{}a_1,\ldots,a_{N-1}\rrbracket$
and $\llbracket{}a_2,\ldots,a_N\rrbracket$
as ``continued fractions representing zero''; see \cite{Chr, HTU, Ste}.
From this point of view, the question we answer in this article
may be formulated as follows:
in how many ways can zero be represented
by a Hirzebruch-Jung continued fraction
$\llbracket{} a_1, \ldots, a_N \rrbracket$
such that $a_1,\ldots,a_N$ are positive integers?

\subsection{Triangulations and the Conway-Coxeter theorem} \label{CoCoS}
A theorem of Conway and Coxeter identifies
a class of solutions of \eqref{SLMatEqInt} which correspond
to triangulations of convex $N$-gons by non-crossing diagonals.
In order to state it they introduce the notion of \textit{quiddity:}
the quiddity of a triangulation is the cyclically ordered $N$-tuple
$(a_1,\ldots,a_N)$, where $a_i$ is the number of triangles
contacting the $i^{\thup}$ vertex of the $N$-gon.

Positive solutions of \eqref{SLMatEqInt} of total sum
$T = 3N-6$, the maximal value of~$T$,
are said to be \textit{totally positive}
(the reasons for this terminology are explained in~\cite{Val}).
Totally positive solutions give $-\Id$ in~\eqref{SLMatEqInt}.

\begin{thm}[Conway and Coxeter~\cite{CoCo}]
\label{CCthm}
The set of all totally positive solutions of \eqref{SLMatEqInt}
is equal to the set of all quiddities of triangulations of $N$-gons.
\end{thm}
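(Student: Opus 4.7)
The plan is to induct on $N$ using the reduction identity
\[
M(a)\,M(1)\,M(b) \;=\; M(a-1)\,M(b-1),
\]
where $M(c):=\bigl(\begin{smallmatrix} c & -1\\ 1 & \phantom{-}0\end{smallmatrix}\bigr)$ denotes the factors appearing in \eqref{SLMatEqInt}. This identity, which I would verify by direct matrix multiplication, encodes the combinatorial operation of removing an \emph{ear} of a triangulation — a triangle two of whose edges lie on the polygon boundary. The apex vertex of such an ear has quiddity $1$, and deleting it produces an $(N-1)$-gon whose quiddity agrees with the original except that the two neighbors of the apex have each decreased by $1$.

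For the direction ``quiddity $\Rightarrow$ totally positive solution'' I would induct on $N$. The base case $N=3$ is the unique triangulation with quiddity $(1,1,1)$ and the direct check $M(1)^3=-\Id$. For $N\geq 4$ every triangulation contains an ear, because the dual tree has at least two leaves; removing the ear, invoking the inductive hypothesis for the resulting $(N-1)$-gon quiddity, and applying the reduction identity yields $\prod_i M(a_i)=-\Id$. The total sum condition $T=3N-6$ then follows by double-counting triangle-vertex incidences, each of the $N-2$ triangles contributing~$3$.

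For the converse direction I would again induct on $N$ with the same base case, reading the reduction identity in reverse. Given an index $k$ with $a_k=1$ and $a_{k-1},a_{k+1}\geq 2$, the $(N-1)$-tuple $(\ldots,a_{k-1}-1,a_{k+1}-1,\ldots)$ is a totally positive solution of length $N-1$, since its total sum drops by $3$ to $3(N-1)-6$ and its matrix product is still $-\Id$. By induction it is the quiddity of a triangulated $(N-1)$-gon, and gluing on an ear at position $k$ produces the desired triangulated $N$-gon with the prescribed quiddity.

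The main obstacle is the combinatorial lemma underlying the converse: \emph{every totally positive solution of length $N\geq 4$ admits some index $k$ with $a_k=1$ and $a_{k-1},a_{k+1}\geq 2$}. The weaker assertion that some $a_i=1$ follows from an easy monotonicity argument on continuants showing $K_N(a_1,\ldots,a_N)\geq N+1$ whenever all $a_i\geq 2$, which contradicts $K_N=-1$. The stronger isolated-$1$ requirement is more delicate: I would argue by contradiction, cyclically rotating so that $a_1=a_2=1$, and exploiting the resulting identity $\prod_{i=3}^N M(a_i)=-\Id\cdot[M(1)^2]^{-1}$ to extract continuant conditions such as $K_{N-4}(a_4,\ldots,a_{N-1})=0$ alongside sign constraints on lower continuants; a secondary induction on $N$ should show these are jointly incompatible with all entries being positive integers.
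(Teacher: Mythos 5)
The paper does not prove this theorem; it cites Conway and Coxeter's original article \cite{CoCo} and relies on it (and on Theorem~\ref{ValRMSthm2}) as black boxes. There is therefore no paper proof to compare against. Your approach is the standard classical one, and it is worth noting that your reduction identity $M(a)M(1)M(b)=M(a-1)M(b-1)$ is precisely the inverse of the blow-up operation \eqref{BUNewEq} that the paper itself uses in Section~\ref{BUBA}, so your argument sits comfortably within the paper's framework.

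Your forward direction (quiddity $\Rightarrow$ totally positive solution) is correct. The converse, however, has a genuine gap, and it is larger than you indicate. The isolated-$1$ lemma --- every totally positive solution of length $N\ge 4$ has some $a_k=1$ with $a_{k\pm1}\ge 2$ --- cannot be proved by the continuant route you sketch, because that route never uses the hypothesis $T=3N-6$, and the lemma is \emph{false} without it. For instance, $(1,1,1,1,1,1)$ is a positive solution of \eqref{SLMatEqInt} for $N=6$ (with $M(1)^6=\Id$), and it has two adjacent $1$'s with no isolated $1$ at all; its total sum is $6=3N-12$, not $3N-6$. Consequently, the constraints you extract from $\prod_{i=3}^N M(a_i)=\pm M(1)$ are consistent with all-positive entries in general, and the proposed ``secondary induction'' on sign conditions of lower continuants cannot close unless it somehow threads the totality condition through. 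Concretely, the single condition $K_{N-4}(a_4,\ldots,a_{N-1})=0$ that you highlight is certainly not a contradiction by itself (e.g.\ $K_2(1,1)=0$). A workable route is instead: if $a_k=a_{k+1}=1$, the identity $M(a_{k-1})M(1)M(1)M(a_{k+2})=-M(a_{k-1}+a_{k+2}-1)$ collapses the sequence to a positive solution of length $N-3$ with total sum $3N-9$, which exceeds the bound $3(N-3)-6$; but proving that $T\le 3(N-2)-6$ for every positive solution is itself a nontrivial a~priori fact (essentially equation~\eqref{AnyA}) that you have not supplied. This is the missing ingredient, and without it the converse direction does not go through.
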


To give a simple example, take $N=5$.
The triangulations of the pentagon are all rotations
of the one depicted here with its quiddity.
It is not difficult to show that for $N=5$ there exist exactly $5$ positive solutions of \eqref{SLMatEqInt}: the cyclic permutations of the quiddity $(1, 3, 1, 2, 2)$.
\begin{figure}[h]
\footnotesize
$$
\xymatrix @!0 @R=0.50cm @C=0.5cm
{
&& 3 \ar@{-}[rrd]\ar@{-}[lld]\ar@{-}[lddd]\ar@{-}[rddd]&
\\
1 \ar@{-}[rdd]&&&& 1 \ar@{-}[ldd]\\
\\
& 2 \ar@{-}[rr]&& 2
}
$$
\normalsize
\end{figure}

It is classical that the number of triangulations of an $(n+2)$-gon is the \textit{Catalan number,}
\begin{equation} \label{C_n}
   C_n := \frac{1}{n+1}\binom{2n}{n}.
\end{equation}
Therefore the number of totally positive solutions of \eqref{SLMatEqInt} is $C_{N-2}$.

\subsection{Quiddities and 3-periodic dissections}
Recall that a \textit{dissection} of a convex $N$-gon is a partition
thereof into sub-polygons by non-crossing diagonals.
We will refer to these diagonals as the \textit{chords} of the dissection.
Just as for triangulations, the \textit{quiddity} of a dissection is the
cyclically ordered $N$-tuple $(a_1, \ldots, a_N)$, where $a_i$
is the number of sub-polygons contacting
the $i^{\thup}$ vertex of the $N$-gon.

We will rely on a combinatorial description of positive solutions
of \eqref{SLMatEqInt} which is one of the main results of~\cite{Val}.
It generalizes Theorem~\ref{CCthm} to what are referred to
in \cite{Val} as \textit{$3d$-dissections:}
dissections such that the number of vertices
of every sub-polygon is a multiple of $3$.
We shall modify this term to \textit{3-periodic dissections.}
The result is as follows.

\begin{thm}[\cite{Val}, Theorem~1.1(i)]
\label{ValRMSthm}
The set of all positive solutions of \eqref{SLMatEqInt} is equal to
the set of all quiddities of 3-periodic dissections of $N$-gons.
\end{thm}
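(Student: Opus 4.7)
I would prove the equivalence by a single induction on $N$, built around a matrix identity that matches the combinatorial operation of removing a sub-polygon. Writing $M(a) := \begin{pmatrix} a & -1 \\ 1 & \phantom{-}0 \end{pmatrix}$, a direct check gives $M(p)M(1)M(q) = M(p-1)M(q-1)$ and $M(1)^3 = -\Id$; combining these yields, for every integer $m \geq 1$,
\begin{equation*}
M(p)\,M(1)^{3m-2}\,M(q) \;=\; (-1)^{m+1}\,M(p-1)\,M(q-1).
\end{equation*}
This is the ``ear-removal'' identity in the 3-periodic setting, and it is what carries the equation \eqref{SLMatEqInt} between the original and contracted quiddities.

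For the direction \emph{quiddity of a 3-periodic dissection $\Rightarrow$ positive solution}, I would induct on the number of sub-polygons. When there is a single sub-polygon, necessarily $N = 3m$ and the quiddity is $(1,\ldots,1)$; the computation $M(1)^{3m} = (-1)^m \Id$ confirms the equation. For the inductive step, the dual graph of any multi-sub-polygon 3-periodic dissection is a tree, hence has a leaf---an \emph{ear} sub-polygon joined to the rest by exactly one chord. If that ear is a $3m$-gon, its $3m-2$ interior vertices contribute $3m-2$ consecutive $1$'s to the quiddity, flanked by entries $p$ and $q$ at the chord endpoints. Removing the ear produces a 3-periodic dissection of an $(N-3m+2)$-gon whose quiddity is obtained by the local replacement $(p, 1^{3m-2}, q) \mapsto (p-1, q-1)$, to which the inductive hypothesis applies; the displayed identity transports the matrix equation back to the original quiddity, up to the sign $(-1)^{m+1}$ correctly predicted by \eqref{AnyA}.

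The converse \emph{positive solution $\Rightarrow$ quiddity of a 3-periodic dissection} proceeds by reverse induction on $N$ but rests on a genuine combinatorial lemma, which I expect to be the main obstacle:

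\emph{Reduction Lemma: every positive solution $(a_1,\ldots,a_N) \neq (1,\ldots,1)$ contains a maximal cyclic run of $1$'s of length $\ell \equiv 1 \pmod 3$, flanked by entries $p,q \geq 2$.}

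Granted this, the reverse reduction $(p, 1^\ell, q) \mapsto (p-1, q-1)$ yields a shorter positive solution that by induction is the quiddity of a 3-periodic dissection of an $(N - \ell)$-gon; grafting an $(\ell + 2)$-gon back along the reinstated chord gives the required 3-periodic dissection of the $N$-gon. The hard part is the Reduction Lemma itself. My attack would be to suppose, for contradiction, that every maximal run of $1$'s has length $\equiv 0$ or $2 \pmod 3$; then after normalizing each such run modulo $3$ using $M(1)^3 = -\Id$, the remaining ``skeleton'' consists of entries $\geq 2$ separated at most by a single isolated $1$ or a pair $(1,1)$. A direct sign-and-growth analysis of partial products on the positive cone of $\R^2$ should show such a product cannot equal $\pm \Id$; equivalently, using the continuant formulation from the introduction, the vanishing conditions $K_{N-1}(a_1,\ldots,a_{N-1}) = K_{N-1}(a_2,\ldots,a_N) = 0$ are incompatible with a skeleton of this shape, since continuants of sequences whose entries are bounded below by $2$ grow strictly. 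Combining either argument with the normalization should deliver the contradiction and close the lemma.
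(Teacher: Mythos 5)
Your proposal takes a genuinely different route from the one the paper summarizes (Section~\ref{BUBA}): the paper invokes~\cite{Val}, where Theorem~\ref{ValRMSthm2} shows that every positive solution is generated from $(1,1,1)$ by blow-ups and expansions, and Lemma~\ref{operations} transports this to quiddities. You instead propose a reduction by ``ear removal'' via the identity $M(p)M(1)^{3m-2}M(q) = (-1)^{m+1}M(p-1)M(q-1)$. That identity is correct, and the direction \emph{quiddity $\Rightarrow$ positive solution} is sound: the dual tree always has a leaf cell, its interior vertices contribute $3m-2$ consecutive $1$'s flanked by entries $\ge 2$, and the induction closes with the right sign.

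The converse, however, rests entirely on your Reduction Lemma, which you correctly flag as the obstacle but leave unproven, and I see a real gap in the sketch. First a small slip: if every maximal run of $1$'s has length $\equiv 0$ or $2 \pmod 3$, then after normalizing each run modulo $3$ via $M(1)^3 = -\Id$ the remaining sequence has runs of $1$'s of length exactly $0$ or $2$ --- there are \emph{no} isolated $1$'s (those would come from a run $\equiv 1$, which you have excluded). More seriously, precisely because pairs of $1$'s can survive, both of the arguments you gesture at fail to apply directly: the ``continuants of sequences with entries bounded below by $2$ grow strictly'' fact, and the positive-cone monotonicity, both require that no $1$'s remain, which is not the case here. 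What actually remains after normalization is a cyclic word in blocks $M(a)$ ($a\ge 2$) and $M(a)M(1)^2 = -T^{a-1}$, i.e.\ a mix of ``shear'' letters $T^{a-1}$ and genuine $T^a S$ letters, and showing that such a word with at least one $S$-letter cannot equal $\pm\Id$ is a nontrivial statement about normal forms in $\PSL(2,\Z)$ (or a winding-number argument) that you have not supplied. Until that lemma is established by a non-circular argument, the hard direction remains open, so the proposal as written does not constitute a proof.
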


For example, it is easy to verify directly that \eqref{SLMatEqInt}
has no positive solutions for $N=1$ or~$2$,
and a unique positive solution for $N=3$,
given by $(a_1, a_2, a_3) = (1, 1, 1)$, the quiddity
of the trivial dissection of the triangle.
The figure shows some simple examples of 3-periodic dissections
which are not triangulations, with their quiddities.
\begin{figure}[!htb]
\footnotesize
\begin{subfigure}{1.3in}
$$
\xymatrix @!0 @R=0.35cm @C=0.45cm
{
&1\ar@{-}[ldd]\ar@{-}[rr]&& 1\ar@{-}[rdd]&\\
\\
1\ar@{-}[rdd]&&&& 1\ar@{-}[ldd]\\
\\
&1\ar@{-}[rr]&&1
}
$$
\end{subfigure}
\begin{subfigure}{1.5in}
$$
\xymatrix @!0 @R=0.35cm @C=0.35cm
{
&&&1\ar@{-}[rrd]\ar@{-}[lld]
\\
&2\ar@{-}[ldd]\ar@{-}[rrrr]&&&& 2\ar@{-}[rdd]&\\
\\
1\ar@{-}[rdd]&&&&&& 1\ar@{-}[ldd]\\
\\
&1\ar@{-}[rrrr]&&&&1
}
$$
\end{subfigure}
\begin{subfigure}{1.6in}
$$
\xymatrix @!0 @R=0.32cm @C=0.45cm
 {
&&&{2}\ar@{-}[dddddddd]\ar@{-}[lld]\ar@{-}[rrd]&
\\
&{1}\ar@{-}[ldd]&&&& {1}\ar@{-}[rdd]\\
\\
{1}\ar@{-}[dd]&&&&&&{1}\ar@{-}[dd]\\
\\
1\ar@{-}[rdd]&&&&&&1\ar@{-}[ldd]\\
\\
&1\ar@{-}[rrd]&&&& 1\ar@{-}[lld]\\
&&&2&
}
$$
\end{subfigure}
\addtocounter{figure}{-1}
\normalsize
\end{figure}

By Theorem~\ref{ValRMSthm}, the number of 3-periodic dissections is
an upper bound for the number of positive solutions of~\eqref{SLMatEqInt}.
In fact, enumerating 3-periodic dissections is not difficult
and may be accomplished via standard combinatorial methods;
we present the result in Section~\ref{3dSec}.
There are many good sources for the techniques involved,
for example, the book \cite{Com} and the article \cite{FN}.

However, the upper bound thus obtained is not strict,
because, in contrast with triangulations,
3-periodic dissections are not determined by their quiddities.
The first occurrence of distinct 3-periodic dissections with the same quiddity
is the octagonal case shown.
\begin{figure}[!htb]
\footnotesize
$$
\xymatrix @!0 @R=0.30cm @C=0.5cm
 {
&1\ar@{-}[ldd]\ar@{-}[rr]&& 2\ar@{-}[rdd]\\
\\
2\ar@{-}[dd]\ar@{-}[rrruu]&&&&1\ar@{-}[dd]\\
\\
1\ar@{-}[rdd]&&&&2\ar@{-}[ldd]\ar@{-}[llldd]\\
\\
&2\ar@{-}[rr]&& 1
}
\qquad\qquad\qquad
\xymatrix @!0 @R=0.30cm @C=0.5cm
 {
&1\ar@{-}[ldd]\ar@{-}[rr]&& 2\ar@{-}[rdd]\\
\\
2\ar@{-}[dd]\ar@{-}[rdddd]&&&&1\ar@{-}[dd]\\
\\
1\ar@{-}[rdd]&&&&2\ar@{-}[ldd]\ar@{-}[luuuu]\\
\\
&2\ar@{-}[rr]&& 1
}
$$
\normalsize
\end{figure}

Our main result is an exact count of the set of positive solutions of~\eqref{SLMatEqInt}.
In order to obtain it, we must enumerate the set of quiddities of 3-periodic dissections,
or in other words, the set of classes of 3-periodic dissections with the same quiddity.
Our approach is to construct a canonical representative of each class;
see Section~\ref{MaxOpenSec}.

\subsection{Remarks and a general problem} \label{RGP}
Although we succeed in counting the quiddities of 3-periodic dissections,
our method relies heavily on 3-periodicity.
It does not seem to adapt to arbitrary dissections,
and so we formulate the following general problem.
As far as we know it is open
and has not been considered in the literature.

\begin{pb}
\label{ThePb}
Count the number of quiddities of dissections.  More precisely, enumerate the distinct quiddities of the set of dissections of an $N$-gon into $m$~sub-polygons.
\end{pb}

We also mention some connections with other fields.
In Section~\ref{ProjPlane} we will see that certain
solutions of~\eqref{SLMatEqInt} correspond to
rational fans in~$\R^2$, relating the topic to the theory of toric surfaces.
Theorem~\ref{CCthm} was rediscovered in~\cite{Chr, Ste}
in this context, where it has become an important tool;
see~\cite{HTU} and references therein.
Another combinatorial model
was recently suggested in~\cite{CT},
encoding arbitrary solutions of~\eqref{SLMatEqInt}.

\subsection{Organization}
In Section~\ref{GenFunSec} we state our main results, characterizing the generating functions of the 3-periodic quiddities.  We give functional equations and formulas for the coefficients, solving the problem of counting positive solutions of~\eqref{SLMatEqInt}.

In Section~\ref{3dSec} we discuss the generating functions of the 3-periodic dissections themselves, as well as those of a more general family of classes of dissections.  This serves both as a review of the relevant techniques and as a source of information needed in the proofs of the main results.

Sections~\ref{LBi} and~\ref{Puiseux} complete the proofs of all results stated in Sections~\ref{GenFunSec} and~\ref{3dSec} except for Theorem~\ref{DiffQThm}, the functional equation satisfied by the bivariate generating function of the 3-periodic quiddities.  In Section~\ref{LBi}, Lagrange-B\"urmann inversion is used to determine the coefficients of the generating functions from their functional equations, and in Section~\ref{Puiseux}, asymptotic estimates are deduced from singularity analysis.  Both of these sections follow \cite{FN} closely.

Sections~\ref{MaxOpenSec} and~\ref{PrThmFMSec} are devoted to the proof of Theorem~\ref{DiffQThm}.  In Section~\ref{MaxOpenSec} we define the class of ``maximally open'' 3-periodic dissections and prove that it is in bijection with the set of 3-periodic quiddities.  This renders the enumeration of 3-periodic quiddities amenable to classical techniques, which we apply in Section~\ref{PrThmFMSec}.

We conclude in Section~\ref{ProjPlane} with an application of a special case of our main result to the enumeration of a certain class of toric varieties.  This special case was previously proven in~\cite{Gui}.

\subsection*{Acknowledgements}
We are grateful to Michel Brion, Michael Cuntz, Sophie Morier-Genoud,
Sergei Tabachnikov, and Sasha Voronov for enlightening discussions,
and we dedicate this paper to the memory of John Horton Conway.
Our discussions with John in October of 2013
were a crucial motivation for this work.
In particular, one of us asked him a na\"ive question:
``What is the reason for the connection between the
Conway-Coxeter equation and triangulations of an $N$-gon?'' 
His answer was ``There is no reason, it's a miracle!''
This ``miracle'' has intrigued and guided us for years.

\section{Quiddity generating functions}
\label{GenFunSec}

In this section we state our main results, descriptions of the \textit{univariate generating function} (UGF) and \textit{bivariate generating function} (BGF) enumerating the quiddities of 3-periodic dissections.  By Theorem~\ref{ValRMSthm}, these are also the generating functions enumerating the positive solutions of~\eqref{SLMatEqInt}.  We give functional equations and explicit formulas for the coefficients, and we give an asymptotic estimate related to work of V.~Kotesovec.  The proofs are developed in subsequent sections.

\subsection{The univariate generating function}
As noted in~\eqref{C_n}, the Catalan number $C_n$ is the
number of triangulations of an $(n+2)$-gon.
We will maintain this shift by~$2$ throughout the article.
Thus to translate between the integer $N$ of the
introduction and the integer $n$ below, set
\begin{equation*}
   N = n+2.
\end{equation*}

The Catalan generating function and its functional equation are
\begin{equation*}
   C(z) := \sum_{n=0}^\infty C_n z^n, \qquad
   C = 1 + z C^2.
\end{equation*}
The functional equation encodes the recursive formula for the coefficients, and Stirling's formula applied to~\eqref{C_n} gives an asymptotic estimate:
\begin{equation*}
   C_n = \sum_{i=0}^{n-1} C_i C_{n-1-i}, \qquad
   C_n = \frac{4^n}{\sqrt{\pi}\, n^{3/2}} \Big( 1 + O \Big(\frac{1}{n}\Big) \Big).
\end{equation*}
In this section we state the analogous results for 3-periodic quiddities.

\begin{defn}
Let $Q_n$ be the number of quiddities of 3-periodic dissections of $(n+2)$-gons, where by convention, $Q_0 := 1$.  The UGF of the 3-periodic quiddities is the formal power series
\begin{equation*}
   Q(z) := \sum_{n=0}^\infty Q_n z^n.
\end{equation*}
\end{defn}

In order to give the functional equation satisfied by $Q(z)$ we must introduce an auxiliary generating function $P(z)$.  Its combinatorial significance will be elucidated in Section~\ref{PrThmFMSec}.

\begin{defn}
Let $P(z) := \sum_{n=0}^\infty P_n z^n$ be the formal power series defined recursively by the equation
\begin{equation} \label{P(z) defn}
   P(z)
   = 1 + z P^2 + z^4 P^4 + z^7 P^6 + \cdots
   = 1 + \frac {z P^2}{1 - z^3 P^2} \,.
\end{equation}
\end{defn}

This formula determines the coefficients $P_n$: they are monotonically increasing positive integers which exceed the Catalan numbers for $n \ge 4$.  For $0 \le n \le 10$ they are
\begin{equation*}
   1,\, 1,\, 2,\, 5,\, 15,\, 48,\, 160,\, 550,\, 1937,\, 6954,\, 25355.
\end{equation*}
This sequence is known: it is a shift of A218251 in the Online Encyclopedia of Integer Sequences (OEIS) \cite{OEIS}, which was authored by P.~Hanna in 2012.  In 2013 V.~Kotesovec added the asymptotic estimate we restate below, as well as a degree~7 recurrence relation.

Our main results concerning $Q(z)$ and $P(z)$ are Theorems~\ref{TotQThm} and~\ref{Q_n}.  They are corollaries of their bivariate analogs, Theorems~\ref{DiffQThm} and~\ref{TheMainCountThm}.

\begin{thm} \label{TotQThm}
$Q(z)$ may be expressed as a rational function of $z$ and $P(z)$:
\begin{equation} \label{Q(z) defn}
   Q(z)
   = 1 + z P^2 + z^4 P^5 + z^7 P^8 + \cdots
   = 1 + \frac {z P^2}{1 - z^3 P^3} \,.
\end{equation}
\end{thm}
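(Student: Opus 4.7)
Plan. The theorem is flagged as a corollary of the bivariate refinement Theorem~\ref{DiffQThm}, so the most direct plan is to derive it by specialization. Assuming Theorem~\ref{DiffQThm} supplies a functional equation that expresses the bivariate generating function $Q(z,t)$ rationally in terms of $z$, $t$, and a companion bivariate series $P(z,t)$, the univariate identity~(\ref{Q(z) defn}) follows by setting $t = 1$, using $Q(z,1) = Q(z)$ and $P(z,1) = P(z)$; the latter series should simultaneously be identified, via the specialization at $t=1$ of the defining equation of $P(z,t)$, with the series defined by~(\ref{P(z) defn}). A routine geometric-series expansion then rewrites $1 + zP^2/(1-z^3P^3)$ as $1 + zP^2 + z^4 P^5 + z^7 P^8 + \cdots$, matching the stated form.

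For a more intuitive combinatorial picture, one can think of $Q(z)$ as the UGF of rooted maximally open 3-periodic dissections, via the bijection to be established in Section~\ref{MaxOpenSec}. Decomposing such a rooted dissection at the unique sub-polygon $R$ containing the base edge: by 3-periodicity $R$ has $3(k+1)$ vertices for some $k \ge 0$, contributing the factor $z^{3k+1}$, and each of its $3k+2$ non-base edges should carry a smaller rooted sub-structure in the auxiliary class $\Pc$ with UGF $P(z)$. Summing $z^{3k+1} P^{3k+2}$ over $k \ge 0$ and adding $1$ for the trivial case gives $Q(z) = 1 + \sum_{k \ge 0} z^{3k+1} P^{3k+2}$, which is exactly~(\ref{Q(z) defn}).

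The main obstacle is not Theorem~\ref{TotQThm} itself but the combinatorial content underneath. One must construct the class of maximally open 3-periodic dissections, prove it is in bijection with the set of 3-periodic quiddities, and identify the companion class $\Pc$ together with its own decomposition~(\ref{P(z) defn}). The subtle point is the discrepancy between the exponents of $P$ in~(\ref{Q(z) defn}) and~(\ref{P(z) defn}): a root $(3k+3)$-gon in the $Q$-decomposition carries $3k+2$ $\Pc$-children, whereas in the $\Pc$-decomposition it carries only $2k+2$ — a reduction by $k$ that must encode the \emph{maximally open} constraint at the interface with the parent sub-polygon. Once this asymmetry and the bivariate Theorem~\ref{DiffQThm} are in hand (Sections~\ref{MaxOpenSec} and~\ref{PrThmFMSec}), the present corollary is a one-line specialization.
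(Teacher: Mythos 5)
Your proposal is correct and matches the paper exactly: Theorem~\ref{TotQThm} is obtained from Theorem~\ref{DiffQThm} by the specialization $w=1$ (your~$t$), using $Q(z,1)=Q(z)$ and $P(z,1)=P(z)$, and the paper indeed defers all the combinatorial work to the bivariate theorem proved in Sections~\ref{MaxOpenSec}--\ref{PrThmFMSec}. Your combinatorial sketch of the root-cell decomposition, including the role of the ``maximally open / base-open'' distinction in explaining the $P^{3k+2}$ versus $P^{2k+2}$ exponent discrepancy, accurately previews the argument given there.
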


\begin{thm} \label{Q_n}
For $n > 0$, the coefficients of $P(z)$ and $Q(z)$ are
\begin{align*}
   & P_n = \sum_{0 \le k < n/3}
   \frac{1}{n-k+1}\, \binom{n - 2k - 1}{k} \binom{2n - 4k}{n - 3k},
   \\[6pt]
   & Q_n = \sum_{\substack{0 \le s \le k, \\ 0 \le k < n/3}}
   \frac{3(k-s)+2}{n-s+1}\, \binom{n-3k+s-2}{s} \binom{2n-3k-s -1}{n-3k-1}.
\end{align*}
\end{thm}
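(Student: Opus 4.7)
The plan is to apply Lagrange-Bürmann inversion to the functional equations of Section~\ref{GenFunSec}, as announced for Section~\ref{LBi}. I would handle $P_n$ first and then reduce $Q_n$ to sums of coefficients of powers of $P(z)$.

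To put~\eqref{P(z) defn} into a form amenable to Lagrange-Bürmann, I would clear the denominator: setting $W = P - 1$ gives the fixed-point equation $W = z(1+W)^2(1 + z^2 W)$. The obstruction to applying the classical Lagrange-Bürmann formula directly is that the factor multiplying $z$ still depends on $z$. I would remove this by introducing an auxiliary indeterminate $v$ and defining the bivariate series $\widetilde{W}(z,v)$ by $\widetilde{W} = z\,\phi(\widetilde{W})$ with $\phi(Y) := (1+Y)^2(1+vY)$, in which $\phi$ is independent of $z$. By uniqueness of the formal solution, $W(z) = \widetilde{W}(z, z^2)$. This substitution trick is in the style of \cite{FN}, which the authors announce following.

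I would then apply Lagrange-Bürmann inversion in the form $[z^n]\,H(\widetilde{W}) = \tfrac{1}{n}[Y^{n-1}]\,H'(Y)\,\phi(Y)^n$ for $n \ge 1$, with test function $H(Y) = (1+Y)^r$. The expansion
\[
   H'(Y)\,\phi(Y)^n \;=\; r\,(1+Y)^{2n+r-1}\sum_{k\ge 0}\binom{n}{k}\,v^k\,Y^k
\]
immediately gives
\[
   [z^n]\,(1+\widetilde{W})^r \;=\; \frac{r}{n}\sum_{k\ge 0}\binom{n}{k}\binom{2n+r-1}{n-1-k}\,v^k.
\]
Substituting $v = z^2$ replaces $v^k$ by $z^{2k}$, so the coefficient of $z^m$ becomes a single sum over pairs $(n,k)$ with $n+2k = m$. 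Taking $r = 1$ yields the formula for $P_m$. For $Q_m$ I would use Theorem~\ref{TotQThm} to write $Q(z)-1 = \sum_{k\ge 0} z^{3k+1}\,P(z)^{3k+2}$, extract $[z^m]$ term by term, and apply the displayed formula with $r = 3k+2$, producing $Q_m$ as a double sum.

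The main obstacle is the algebraic step of rewriting these sums in the indexing stated in the theorem. The Lagrange-Bürmann summation index (what I called $n$ above) has no direct combinatorial significance for the dissections and must be eliminated via the constraint imposed by the exponent of $z$; the surviving indices then have to be relabeled to match the prescribed ranges $0 \le k < n/3$ and $0 \le s \le k$, and in particular the denominator $n$ coming from Lagrange-Bürmann must be transformed into $n-k+1$ (respectively $n-s+1$) by a reciprocal-binomial identity. This bookkeeping is routine but must be carried out carefully, after which the equality of the two representations of the same sum reduces to elementary binomial manipulations.
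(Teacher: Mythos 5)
Your proposal is correct, and the overall strategy is the same as the paper's: apply Lagrange-B\"urmann inversion to the functional equation for $P$, then write $Q - 1 = \sum_{k \ge 0} z^{3k+1} P^{3k+2}$ (which is Theorem~\ref{TotQThm}) and extract coefficients term by term using the $r = 3k+2$ case. The device you use for getting into Lagrange-B\"urmann form differs from the paper's, however. You set $W = P-1$, note $W = z(1+W)^2(1+z^2 W)$, then decouple the $z$ in the last factor by introducing an auxiliary variable $v$ so that $\phi(Y) = (1+Y)^2(1+vY)$ is $z$-independent, solve by LB, and finally substitute $v = z^2$. The paper instead proves the identity $P(z,w) = D[2](z^{3/2},\, w z^{-1/2})$ (Lemma~\ref{tilde D2}), i.e.\ a fractional-power substitution relating $P$ to the $2$-periodic dissection BGF, and then reuses the uniform LB computation it has already carried out for every $D[\ell]^e$ (Proposition~\ref{Gen Per KC}). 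Your route is more self-contained and avoids fractional exponents; the paper's route is more unified with the rest of its framework and delivers the bivariate Theorem~\ref{TheMainCountThm} with no additional effort, after which the univariate formulas follow by setting $w=1$. The reindexing you flag at the end is indeed the only remaining step, and it goes through: after substituting $v = z^2$ the surviving index $k$ already matches the paper's, and the denominator conversion reduces to the single identity $\frac{1}{a}\binom{b}{a-1} = \frac{1}{b-a+1}\binom{b}{a}$ with $a = n-3k$, $b = 2n-4k+r-1$ (so that $b - a + 1 = n - k + r$), which recovers $\frac{1}{n-k+1}$ when $r=1$ and $\frac{1}{n-s+1}$ after the shift $s = k - j$ in the $Q$-sum.
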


The sequence $Q_n$ is not yet in the OEIS.  Its initial terms are shown in the table.

\begin{table}[htbp]
\footnotesize
\begin{tabular}{|c||c|c|c|c|c|c|c|c|c|c|c|c|c|c|c|}
\hline
$n$ &0 & 1 & 2 & 3 & 4 & 5 & 6 & 7 & 8 & 9 & 10 &11 & 12 &13&14\\
\hline\hline
$Q_n$ & 1 & 1 & 2 & 5 & 15 & 49 & 166 & 577 & 2050 & 7414 & 27201 & 100984 & 378651 & 1431901 & 5454718 \\ 
\hline
\end{tabular}
\bigbreak
\caption*{The coefficients $Q_n$ of $Q(z)$: the number of positive solutions of~\eqref{SLMatEqInt}}
\normalsize
\end{table}

The final theorem of this section gives asymptotic estimates for $Q_n$ and $P_n$.  It will be proven in Section~\ref{Puiseux}, and for $P_n$ it is due to Kotesovec.  It involves the following positive algebraic numbers:
\begin{itemize}
\item
Let $\rho$ be the least positive root of the irreducible polynomial
\begin{equation} \label{Res_y F_P}
   4z^7 - 12z^5 - 8z^4 + 12z^3 - 20z^2 + 1.
\end{equation}
\smallbreak \item
Let $\nu$ be the least positive root of the irreducible polynomial
\begin{equation} \label{Res_z F_P}
   y^7 - 2y^6 - 4y^4 + 4y^3 + 4y -2.
\end{equation}
\bigbreak \item
Define\ 
$\displaystyle \gamma_P := \frac{1}{2}
\sqrt{\frac{\nu^3 - 2 \rho \nu^2 + 1}{\rho (3\rho \nu - \rho^2 + 1)}}$
\ and\ 
$\displaystyle \gamma_Q := \frac{\nu (\nu^3 + 2)}{(\nu^3 - 1)^2} \gamma_P$.
\end{itemize}
\medbreak \noindent
The approximate values of these numbers are
\begin{equation} \label{rngg}
   \rho \approx 0.237287, \quad
   \nu \approx 0.452578, \quad
   \gamma_P \approx 0.910244, \quad
   \gamma_Q \approx 1.047266.
\end{equation}

\begin{thm}[Kotesovec, A218251, \cite{OEIS}] \label{TotQasymp} \
\begin{enumerate}
\item[(i)]
$P(z)$ and $Q(z)$ both have radius of convergence $\rho$.
\smallbreak \item[(ii)]
At $\rho$, $P$ and $Q$ have infinite first derivatives but finite values:
\begin{equation*}
   P(\rho) = \nu / \rho, \qquad
   Q(\rho) = 1 + \nu^2 / \rho (1 - \nu^3).
\end{equation*}
\smallbreak \item[(iii)]
Asymptotically,
\begin{equation*}
   P_n = \frac{\gamma_P \rho^{-n}}{\sqrt{\pi}\, n^{3/2}}
   \Big( 1 + O \Big(\frac{1}{n}\Big) \Big),
   \qquad
   Q_n = \frac{\gamma_Q \rho^{-n}}{\sqrt{\pi}\, n^{3/2}}
   \Big( 1 + O \Big(\frac{1}{n}\Big) \Big).
\end{equation*}
\end{enumerate}
\end{thm}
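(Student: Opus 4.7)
The plan is to apply Flajolet--Odlyzko singularity analysis of algebraic generating functions (following \cite{FN}) to both $P(z)$ and $Q(z)$. I would begin by rewriting the defining identity $(1 - z^3 P^2)(P - 1) = zP^2$ as the polynomial equation
\[
F(z, P) := z^3 P^3 + (z - z^3)P^2 - P + 1 = 0,
\]
or equivalently, after the substitution $y(z) := z P(z)$,
\[
G(z, y) := z y^3 + (1 - z^2) y^2 - y + z = 0.
\]
The $y$-form is convenient because the numbers in the statement should correspond to $y(\rho) = \nu$. Since $P_n \ge 0$, Pringsheim's theorem places the dominant singularity on the positive real axis, and for this algebraic branch it must be the smallest positive $z_0$ at which $G$ and $G_y$ vanish simultaneously. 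Computing $\Res_y(G, G_y)$ and $\Res_z(G, G_y)$ and extracting the factor cutting out the correct branch then produces~\eqref{Res_y F_P} and~\eqref{Res_z F_P}, identifying this $z_0$ as $\rho$ and the corresponding $y_0$ as $\nu$. One checks directly that $G_{yy}(\rho, \nu) \ne 0$ (so the singularity is genuinely of square-root type) and that $\rho$ is the unique singularity on $|z| = \rho$ (which follows from positivity and aperiodicity of the sequence $P_n$).

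At this point the smooth implicit-function schema supplies the Puiseux expansion
\[
y(z) = \nu - c_y \sqrt{1 - z/\rho} + O(1 - z/\rho), \qquad c_y = \sqrt{\frac{2\rho\, G_z(\rho, \nu)}{G_{yy}(\rho, \nu)}},
\]
and $P(z) = y(z)/z$ inherits the same singular profile with leading coefficient $c_P = c_y/\rho$. Using the computations $G_z(\rho, \nu) = \nu^3 - 2\rho\nu^2 + 1$ and $G_{yy}(\rho, \nu) = 2(3\rho\nu - \rho^2 + 1)$, one sees that $c_P/2$ is exactly the $\gamma_P$ of the statement. Part (ii) for $P$ follows immediately: $P(\rho) = \nu/\rho$ is finite, while termwise differentiation of $\sqrt{1 - z/\rho}$ produces $(1 - z/\rho)^{-1/2}$, so $P'(\rho) = \infty$. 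The Flajolet--Odlyzko transfer $[z^n]\sqrt{1 - z/\rho} \sim -\rho^{-n}/(2\sqrt{\pi}\, n^{3/2})$ then yields the asymptotic estimate for $P_n$.

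Turning to $Q$, Theorem~\ref{TotQThm} writes $Q = h(z, P)$ with $h(z, P) := 1 + zP^2/(1 - z^3 P^3)$. The key observation is that $1 - \rho^3(\nu/\rho)^3 = 1 - \nu^3 > 0$, so $h$ is analytic at $(\rho, \nu/\rho)$ and substitution gives the stated value $Q(\rho) = 1 + \nu^2/(\rho(1 - \nu^3))$. A single application of the chain rule produces
\[
Q(z) - Q(\rho) \sim h_P(\rho, \nu/\rho) \cdot (P(z) - \nu/\rho) \sim -h_P(\rho, \nu/\rho)\, c_P \sqrt{1 - z/\rho},
\]
and a short calculation gives $h_P(\rho, \nu/\rho) = \nu(\nu^3 + 2)/(\nu^3 - 1)^2$. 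A second application of the transfer theorem delivers the asymptotic for $Q_n$ with $\gamma_Q = h_P(\rho, \nu/\rho)\, \gamma_P$, as stated. The main obstacle in executing this plan is the resultant computation: one must verify that the irreducible factors picking out the correct branches of $\Res_y(G, G_y)$ and $\Res_z(G, G_y)$ coincide with~\eqref{Res_y F_P} and~\eqref{Res_z F_P}, and simultaneously rule out other singularities on the circle $|z| = \rho$. Once this algebraic bookkeeping is carried out, the remaining steps are a direct application of the singularity-analysis machinery.
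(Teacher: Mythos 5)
Your plan is essentially the paper's own proof, modulo small repackagings. The paper also passes to $y_P = zP$ (your $y$), writes the algebraic equation $F_P(z,y)=zy^3+(1-z^2)y^2-y+z=0$ (your $G$), locates $\rho$ among the roots of the discriminant of $F_P$ in $y$ (which is the resultant $\Res_y(F_P,\partial_y F_P)$ you propose computing), finds $\nu$ as the extremum of the inverse function $z_P(y)$ (equivalently, among the roots of $\Res_z(F_P,\partial_y F_P)/y$, which the paper flags in a remark), applies a Darboux-type singularity-analysis theorem with $\gamma^2 = \rho\,\partial_z F_P/2\partial_y^2 F_P$, and for $Q$ composes with an analytic function of $P$ (the paper uses $y_Q = z + g(y_P)$ with $g(y)=y^2/(1-y^3)$, which is your $h(z,P)$ re-expressed in $y$; both yield $\gamma_Q = g'(\nu)\gamma_P = \nu(\nu^3+2)/(\nu^3-1)^2\cdot\gamma_P$). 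Your constants $c_y$, $c_P$, and $\gamma_P = c_P/2$ all reconcile with the paper's normalization.

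One small point where you are slightly too glib: you attribute uniqueness of the dominant singularity on $|z|=\rho$ to "positivity and aperiodicity of $P_n$." That is not how the paper argues, and it is not automatic. The cleaner route, which is what the paper actually does, is to observe that all singularities of the algebraic branch must occur at roots of $\Res_y(F_P,\partial_y F_P)$, and then to inspect the magnitudes of those seven roots: besides $\rho\approx0.237$, the remaining roots have magnitudes $\approx 0.21, 0.94, 1.70, 1.97$. The one of magnitude $<\rho$ (near $-0.21$) is inside the disk of convergence, and the paper observes in a remark why the relevant branch is in fact analytic there. The upshot is that no other root lies on $|z|=\rho$, which is what one needs; you should replace the aperiodicity appeal with this explicit check.
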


\subsection{The bivariate generating function} \label{BGF Sec}
In this section we determine the number of positive solutions
of \eqref{SLMatEqInt} with a given value of the total sum $T$.

In any dissection of an $(n+2)$-gon into sub-polygons,
we refer to the sub-polygons as \textit{cells.}
Subpolygons with $R$ vertices are called $R$-cells.
We denote the total number of cells in the dissection by $m$.

Given a dissection of an $(n+2)$-gon with quiddity $(a_1, \ldots, a_{n+2})$, let us define the \textit{total sum} of the quiddity to be
\begin{equation*}
   T := a_1 + \cdots + a_{n+2}.
\end{equation*}
In light of Theorem~\ref{ValRMSthm}, this definition is compatible with~\eqref{MatEq Total Sum}.  The following lemma shows that for a given value of~$n$, fixing $m$ is equivalent to fixing~$T$.

\begin{lem} \label{T(m)}
Consider a dissection of an $(n+2)$-gon with $m$ cells.
\begin{enumerate}
\item[(i)]
The dissection has $m-1$ chords, and $T = n + 2m$.
\smallbreak \item[(ii)]
If the dissection is 3-periodic, then $m = n - 3k$ and $T = 3 (n - 2k)$ for some non-negative integer $k < \frac{1}{3} n$, the same $k$ appearing in~\eqref{AnyA}.
\end{enumerate}
\end{lem}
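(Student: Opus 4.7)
The plan is to prove part~(i) first by a double-counting argument on vertex-cell incidences, then deduce part~(ii) as a direct corollary using the divisibility constraint imposed by 3-periodicity.

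For part~(i), the count of $m - 1$ chords is standard: adding a non-crossing diagonal to any dissection splits exactly one cell into two, so the number of cells equals one plus the number of chords. (One could alternatively argue by induction on $m$.) For the identity $T = n + 2m$, the key observation is that $T$ admits two interpretations. By definition, $T = \sum_{i=1}^{n+2} a_i$ counts the number of (vertex, cell) pairs where the vertex lies on the cell. Grouping instead by cell, this equals $\sum_c R_c$, where the sum runs over the $m$ cells and $R_c$ denotes the number of vertices (equivalently, edges) of cell $c$. Now count the edge-sides: each of the $m - 1$ chords contributes to two cells, while each of the $n + 2$ boundary edges of the $(n+2)$-gon contributes to exactly one cell. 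Therefore
\begin{equation*}
   T \;=\; \sum_c R_c \;=\; 2(m-1) + (n+2) \;=\; n + 2m,
\end{equation*}
as claimed.

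For part~(ii), 3-periodicity means $R_c \equiv 0 \pmod 3$ for every cell, so $T = \sum_c R_c \equiv 0 \pmod 3$. Combining with $T = n + 2m$ from part~(i) yields $n + 2m \equiv 0 \pmod 3$, which rearranges to $n \equiv m \pmod 3$. Writing $n - m = 3k$ then gives $m = n - 3k$ and $T = n + 2(n - 3k) = 3(n - 2k)$, matching the expression in~\eqref{AnyA}. The bounds on $k$ follow from $1 \le m \le n$: the lower bound $m \ge 1$ (the polygon itself is a cell) yields $k < n/3$, and the upper bound $m \le n$ (achieved by triangulations) yields $k \ge 0$.

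The argument is almost entirely routine; the only point requiring any thought is the edge double-count in part~(i), and even there the bookkeeping is immediate once one recognizes that $\sum_c R_c$ equals the total number of cell-edge incidences. No step appears to present a genuine obstacle.
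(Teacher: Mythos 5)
Your proof is correct. For part~(i) your argument is essentially dual to the paper's: you double-count vertex-cell incidences ($T = \sum_c R_c$) and then count each side according to whether it is a chord (shared by two cells) or a boundary edge (one cell), whereas the paper double-counts chord-vertex incidences via the observation that exactly $a_i - 1$ chords contact the $i^{\thup}$ vertex; both reduce to the same bookkeeping. For part~(ii) your route is genuinely different: you observe directly that $3 \mid R_c$ for every cell forces $3 \mid T$, and then the identity $T = n + 2m$ yields $n \equiv m \pmod 3$ in one step. The paper instead completes the dissection to a triangulation, noting that triangulating an $R$-cell adds $R - 3$ chords (a multiple of $3$ in the 3-periodic case), so the chord count $m - 1$ differs from the triangulation's chord count $n - 1$ by a multiple of~$3$. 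Your version is a little more self-contained since it reuses part~(i) rather than importing the chord count of a triangulation, while the paper's version has the advantage of making the connection to triangulations explicit, which is in keeping with how these dissections are studied elsewhere in the article. The bounds $0 \le k < n/3$ are handled correctly: $m \ge 1$ gives $k < n/3$ and $m \le n$ gives $k \ge 0$.
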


\begin{proof}
For~(i), use the fact that the number of chords contacting the $i^{\thup}$ vertex is $a_i - 1$.  For~(ii), note that any dissection can be made into a triangulation by triangulating each $R$-cell.  Because triangulating an $R$-cell adds $R - 3$ chords, the total number of new chords needed to convert a 3-periodic dissection to a triangulation is a multiple of~$3$.
\end{proof}

\begin{defn}
Let $Q_{n, m}$ be the number of quiddities of 3-periodic dissections of $(n+2)$-gons with $m$ cells, where by convention, $Q_{0, 0} := 1$ and $Q_{0, m} := 0$ for $m > 0$.  The BGF of the 3-periodic quiddities is the formal power series
\begin{equation} \label{Q defn}
   Q(z, w) := \sum_{n = 0}^\infty \sum_{m = 0}^\infty Q_{n, m} z^n w^m.
\end{equation}
\end{defn}

The following corollary of Theorem~\ref{ValRMSthm} and Lemma~\ref{T(m)} is immediate.

\begin{cor}
\begin{enumerate}
\item[(i)]
In any positive solution of \eqref{SLMatEqInt}, the total sum $T$ is $3 (n - 2k)$
for some non-negative integer $k < n/3$ (where $n = N - 2$).
\smallbreak \item[(ii)]
For $k < n/3$, the number of positive solutions of \eqref{SLMatEqInt}
with $T = 3 (n - 2k)$ is $Q_{n, n-3k}$.
\end{enumerate}
\end{cor}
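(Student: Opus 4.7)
The plan is to simply chain Theorem~\ref{ValRMSthm} with Lemma~\ref{T(m)}(ii), noting that the ``total sum'' defined in~\eqref{MatEq Total Sum} for a tuple $(a_1,\ldots,a_N)$ agrees tautologically with the total sum of the quiddity $(a_1,\ldots,a_{n+2})$ of a dissection defined in Section~\ref{BGF Sec}, since both are just $\sum a_i$ and the bijection of Theorem~\ref{ValRMSthm} assigns the same $N$-tuple on both sides (under the shift $N = n+2$).

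For part~(i), take any positive solution of~\eqref{SLMatEqInt} with $N = n+2$. By Theorem~\ref{ValRMSthm}, it is the quiddity of some 3-periodic dissection $D$ of an $(n+2)$-gon. Applying Lemma~\ref{T(m)}(ii) to $D$ yields a non-negative integer $k < n/3$ such that $D$ has $m = n - 3k$ cells and its quiddity has total sum $T = 3(n - 2k)$; by the observation above, this is also the total sum of the solution.

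For part~(ii), fix $k < n/3$ and consider the bijection of Theorem~\ref{ValRMSthm} restricted to positive solutions with $T = 3(n-2k)$. On the dissection side, Lemma~\ref{T(m)}(i) gives $T = n + 2m$, so the equation $T = 3(n-2k)$ is equivalent to $m = n - 3k$. Thus the positive solutions with $T = 3(n-2k)$ are in bijection with the quiddities of 3-periodic dissections of $(n+2)$-gons having exactly $m = n - 3k$ cells, and by definition this count is $Q_{n,\,n-3k}$.

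No genuine obstacle arises: the content of the corollary is a direct translation of the two preceding results via the matching of the two notions of total sum. The only point worth spelling out in the written proof is the compatibility of the definitions of $T$, which was already flagged immediately after the definition of total sum in Section~\ref{BGF Sec}.
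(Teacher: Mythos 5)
Your proposal is correct and is exactly the argument the paper intends: the corollary is stated in the paper as an immediate consequence of Theorem~\ref{ValRMSthm} and Lemma~\ref{T(m)}, and you have simply spelled out the chaining (identify positive solutions with quiddities via Theorem~\ref{ValRMSthm}, then use Lemma~\ref{T(m)}(ii) for part~(i) and Lemma~\ref{T(m)}(i) for the equivalence $T = 3(n-2k) \Leftrightarrow m = n-3k$ in part~(ii), which reduces the count to $Q_{n,n-3k}$ by definition). Your explicit note that the two notions of total sum coincide, and that $m$ is determined by the quiddity through $T = n + 2m$, is exactly the (admittedly trivial) compatibility one should verify, so no gap.
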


As in the univariate case, in order to give the functional equation satisfied by $Q(z, w)$ we must introduce an auxiliary generating function $P(z, w)$, whose combinatorial significance will be given in Section~\ref{PrThmFMSec}.

\begin{defn}
Let $P(z, w) := \sum_{n=0}^\infty \sum_{m=0}^\infty P_{n, m} z^n w^m$ be the formal power series defined recursively by
\begin{equation} \label{P recurrence}
   P(z, w)
   = 1 + w z P^2 + w z^4 P^4 + w z^7 P^6 + \cdots
   = 1 + \frac {w z P^2}{1 - z^3 P^2} \,.
\end{equation}
\end{defn}

It is an abuse of notation to use the symbols $Q$ and $P$ for both the UGFs and the BGFs, but the arguments resolve the ambiguity.  Observe that evaluating the BGFs at $w = 1$ gives the UGFs:
\begin{equation} \label{QP U from B}
   Q(z) = Q(z, 1), \qquad P(z) = P(z, 1).
\end{equation}

We are now prepared to state the bivariate versions of Theorems~\ref{TotQThm} and~\ref{Q_n}: Theorems~\ref{DiffQThm} and~\ref{TheMainCountThm}, respectively.  They immediately imply their univariate counterparts: in light of~\eqref{QP U from B}, Theorem~\ref{TotQThm} is Theorem~\ref{DiffQThm} evaluated at $w = 1$ and Theorem~\ref{Q_n} is Theorem~\ref{TheMainCountThm} summed over~$k$.  

As discussed in the introduction, the proof of Theorem~\ref{DiffQThm} is of a different nature from the proofs of our other results.  It occupies Sections~\ref{MaxOpenSec} and~\ref{PrThmFMSec}.  Theorem~\ref{TheMainCountThm} follows from an application of Lagrange-B\"urmann inversion to \eqref{P recurrence} and~\eqref{Q recurrence}; the details are given in Section~\ref{LBi}.

\begin{thm} \label{DiffQThm}
$Q(z, w)$ is a rational function of $z$, $w$, and $P(z, w)$:
\begin{equation} \label{Q recurrence}
   Q(z, w)
   = 1 + w z P^2 + w z^4 P^5 + w z^7 P^8 + \cdots
   = 1 + \frac {w z P^2}{1 - z^3 P^3} \,.
\end{equation}
\end{thm}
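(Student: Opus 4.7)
The plan is to follow the two-phase strategy of Sections~\ref{MaxOpenSec} and~\ref{PrThmFMSec}: first select a canonical representative of each 3-periodic quiddity class, and then count those representatives by a classical root decomposition.

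For the first phase, the obstruction to reading $Q(z,w)$ off directly from the BGF of all 3-periodic dissections is that distinct dissections can share a quiddity, as in the octagonal example of the introduction. I would therefore single out the subclass of \emph{maximally open} 3-periodic dissections introduced in Section~\ref{MaxOpenSec}, defined by a local chord-rearrangement rule that preserves the quiddity but is always applied in the direction that makes cells as large as possible. The key claim is that the assignment "take the quiddity" restricts to a bijection between this subclass and the set of 3-periodic quiddities; equivalently, the rearrangement procedure is both confluent and terminating. This is the step in which 3-periodicity is used essentially, and it is the combinatorial heart of the argument.

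For the second phase, with maximally open dissections in hand as canonical representatives, I would root each $(n+2)$-gon at a distinguished edge and decompose the dissection by the cell $\Delta$ containing that root edge. Since the dissection is 3-periodic, $\Delta$ is a $(3k+3)$-gon for some $k\ge 0$, introducing $3k+1$ new vertices beyond the root edge and contributing the factor $w z^{3k+1}$ to the BGF. Each of its $3k+2$ non-root edges carries an independent sub-dissection on a strictly smaller rooted polygon. Identifying the auxiliary class whose BGF is $P$ as exactly those sub-dissections that may legitimately appear on a non-root edge of $\Delta$ without violating maximal openness, the sum over cell sizes collapses to a geometric series
\begin{equation*}
   Q = 1 + \sum_{k \ge 0} w z^{3k+1} P^{3k+2} = 1 + \frac{w z P^2}{1 - z^3 P^3},
\end{equation*}
which is the desired identity. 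The parallel decomposition applied to a $P$-object recovers~\eqref{P recurrence}: the maximally open condition at the root edge of a $P$-object forces $k$ of the $3k+2$ non-root edges of the root cell to be "rigid", so that only $2k+2$ of them carry arbitrary $P$-sub-objects, producing the exponent $P^{2k+2}$ in place of $P^{3k+2}$.

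The main obstacle is squarely in the first phase: formulating the right notion of maximally open, proving termination and confluence of the opening procedure, and deducing injectivity of the quiddity map on the canonical class. Once that combinatorial fact is in place, the generating-function identity is purely formal — it is the sum of a geometric series, and the functional relationship between $Q$ and $P$ is forced by the root-cell combinatorics recorded above.
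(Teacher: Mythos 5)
Your proposal follows essentially the same two-phase strategy as the paper: Section~\ref{MaxOpenSec} constructs the maximally open canonical representatives (Theorem~\ref{MainTechThm}), and Section~\ref{PrThmFMSec} decomposes by the base cell, with $P$ playing exactly the role you describe as the BGF of sub-dissections admissible on non-base sides (there called ``maximally open base-open''), yielding $Q = 1 + \sum_{k\ge 0} wz^{3k+1}P^{3k+2}$ and $P = 1 + \sum_{k\ge 0} wz^{3k+1}P^{2k+2}$. The only difference in emphasis is that the paper proves uniqueness of the maximally open representative by induction via inverse blow-up and inverse expansion rather than by confluence of the opening moves, but the architecture and the bookkeeping of the geometric series are the same.
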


\begin{thm} \label{TheMainCountThm}
For $n > 0$, the coefficients $P_{n, m}$ and $Q_{n, m}$ of $P(z, w)$ and $Q(z, w)$ are~$0$ unless $m = n - 3k$ for some non-negative integer $k < n/3$, in which case they are given by
\begin{align}
   & P_{n, n-3k} = \frac{1}{n-k+1}\, \binom{n-2k-1}{k} \binom{2n-4k}{n-3k},
   \label{Pnm formula}\\[6pt]
   & Q_{n, n-3k} = \sum_{0 \le s \le k}
   \frac{3(k-s)+2}{n-s+1}\, \binom{n-3k+s-2}{s} \binom{2n-3k-s -1}{n-3k-1}.
   \label{Qnm formula}
\end{align}
\end{thm}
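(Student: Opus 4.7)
The plan is to apply univariate Lagrange--B\"urmann inversion to \eqref{P recurrence}, treating $z$ as a parameter. Setting $T := P - 1$, equation \eqref{P recurrence} rewrites as $T = w\,\phi(T, z)$ with
$$\phi(T, z) \;=\; \frac{z(1+T)^2}{1 - z^3(1+T)^2}.$$
Since $\phi(0, z) = z/(1 - z^3)$ is a nonzero formal power series, Lagrange--B\"urmann inversion in $w$ applies and gives, for any formal series $H$,
$$[w^m]\, H(T) \;=\; \tfrac{1}{m}\, [T^{m-1}]\, H'(T)\, \phi(T, z)^m \qquad (m \ge 1).$$
The key structural feature is that $\phi$ depends on $T$ only through $(1+T)^2$, so every intermediate coefficient extraction reduces to the binomial theorem.

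To prove \eqref{Pnm formula}, take $H(T) = T$ to get
$$[z^n w^m]\, P \;=\; \tfrac{1}{m}\, [T^{m-1}\, z^n]\, \frac{z^m (1+T)^{2m}}{\bigl(1 - z^3(1+T)^2\bigr)^m}.$$
Expanding $(1 - z^3(1+T)^2)^{-m} = \sum_j \binom{m-1+j}{j}\, z^{3j}(1+T)^{2j}$ and extracting $[z^n]$ forces $m + 3j = n$; with $m = n - 3k$ only the term $j = k$ survives, leaving $\tfrac{1}{n-3k} \binom{n-2k-1}{k}\, [T^{n-3k-1}](1+T)^{2n-4k}$. Applying the binomial theorem and the identity $\binom{2n-4k}{n-3k-1} = \tfrac{n-3k}{n-k+1}\binom{2n-4k}{n-3k}$ yields exactly \eqref{Pnm formula}.

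For \eqref{Qnm formula}, first expand the geometric series in \eqref{Q recurrence}:
$$Q - 1 \;=\; \sum_{j \ge 0}\, w\, z^{3j+1}\, P^{3j+2}, \qquad \text{so} \qquad Q_{n, m} \;=\; \sum_{j \ge 0}\, [z^{n-3j-1}\, w^{m-1}]\, P^{3j+2}.$$
Applying Lagrange--B\"urmann with $H(T) = (1+T)^r$ and $r := 3j+2$ gives
$$[w^{m-1}]\, (1+T)^r \;=\; \tfrac{r}{m-1}\, [T^{m-2}]\, \frac{z^{m-1}(1+T)^{2(m-1)+r-1}}{\bigl(1 - z^3(1+T)^2\bigr)^{m-1}}.$$
Expanding the denominator and matching $z$-exponents fixes $s := k - j$ with $0 \le s \le k$; a binomial extraction in $T$ and reindexing from $j$ to $s$ yields the sum $\sum_s \tfrac{3(k-s)+2}{n-3k-1}\binom{n-3k+s-2}{s}\binom{2n-3k-1-s}{n-3k-2}$, which matches \eqref{Qnm formula} after the identity $\binom{2n-3k-1-s}{n-3k-2} = \tfrac{n-3k-1}{n-s+1}\binom{2n-3k-s-1}{n-3k-1}$.

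The approach is essentially dictated by the shape of \eqref{P recurrence}, and the only real difficulty is the careful bookkeeping of indices and the final binomial rewriting needed to arrive at the exact form of the stated formulas. The edge case $m = 1$, in which $m - 1 = 0$ and the Lagrange formula is inapplicable, is handled directly via the observation $[w^0]\, H(T) = H(0)$; this reproduces the correct value $Q_{3k+1, 1} = 1$ corresponding to the trivial dissection of a $(3k+3)$-gon.
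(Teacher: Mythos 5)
Your proof is correct, and it takes a genuinely different route from the paper. The paper applies Lagrange--B\"urmann inversion in the variable $z$: it sets $y_\ell = zD[\ell]$ to derive Proposition~\ref{Gen Per KC} (a formula for $D[\ell]^e$), then observes via Lemma~\ref{tilde D2} that $P(z,w)$ coincides with the shifted series $D[2](z^{3/2}, wz^{-1/2})$, and imports the $D[2]$ coefficients through that substitution; the $Q$-formula then follows from Theorem~\ref{DiffQThm} exactly as in your last step. You instead treat $z$ as a parameter and invert in $w$, setting $T = P - 1$ so that $T = w\,\phi(T,z)$ with $\phi(T,z) = z(1+T)^2/(1 - z^3(1+T)^2)$, after which the coefficient extractions collapse to binomial expansions because $\phi$ depends on $T$ only through $(1+T)^2$. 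Your route is more self-contained and arguably shorter for this particular theorem, since it bypasses the $D[2]$ detour entirely; what the paper's route buys is the general-$\ell$ formula of Proposition~\ref{Gen Per KC} and Theorem~\ref{Periodic KC}, which it needs anyway in Sections~\ref{3dSec} and~\ref{Puiseux}, so the $D[2]$ substitution comes essentially for free in context. Both arguments invoke Theorem~\ref{DiffQThm} as a black box, so neither is logically independent of Sections~\ref{MaxOpenSec}--\ref{PrThmFMSec}. One small point worth noting: your $m=1$ edge case for $Q$ is handled correctly, and in fact the proposed closed formula~\eqref{Qnm formula} also returns the right value $Q_{3k+1,1}=1$ on its own (only $s=0$ contributes, since $\binom{s-1}{s}=0$ for $s\ge 1$), so the formula requires no caveat in its statement.
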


Let us write $Q_{n, n-3k}$ explicitly at $k = 0$, $1$, and~$2$.
At $k = 0$, both formulas reduce to the Catalan numbers:
\begin{equation*}
   Q_{n, n} = P_{n, n} = C_n.
\end{equation*}
In fact, this may be seen without computation by letting $z$ go to~$0$ while holding $wz$ constant in \eqref{P recurrence} and~\eqref{Q recurrence}.

At $k = 1$ and~$2$ we obtain sequences not currently in the OEIS:
\begin{align}
\label{Qn3Eq}
   & Q_{n,n-3} =
   \binom{2n-4}{n-4} + \frac{6}{n+1} \binom{2n-5}{n-5} =
   \binom{2n-3}{n-4} - 2\binom{2n-5}{n-6}, \\[6pt]
\label{Qn6Eq}
   & Q_{n,n-6} =
   \frac{n-5}{2} \binom{2n-6}{n-7} - (n+2) \binom{2n-8}{n-9} - (n-2) \binom{2n-9}{n-10}.
\end{align}
The $k = 1$ sequence $Q_{n, n-3}$ plays a central role in our enumeration of blow-ups of the projective plane; see Section~\ref{ProjPlane}.  Although it is not in the OEIS, it is a sum of OEIS entries: the differential sequence $Q_{n, n-3} - P_{n, n-3}$ is A003517, and the sequence $P_{n, n-3}$ is A002694, a sequence of binomial coefficients with a number of combinatorial interpretations:
\begin{equation*}
    P_{n,n-3}=\binom{2n-4}{n-4}.
\end{equation*}

\begin{rem}
The $k=1$ sequence~\eqref{Qn3Eq} was calculated in Theorem~V.30 of \cite{Gui} using a different approach: the coefficient of $X^{k+6}$ in the formula for $G_{K_1}(X)$ given there is equal to $Q_{k+4, k+1}$.
\end{rem}

The tables here give the initial coefficients $P_{n, n-3k}$ and $Q_{n, n-3k}$, with $k$ fixed along rows.  They are accompanied by further remarks on $Q_{n, n-3k}$.

\begin{table}[htbp]
\footnotesize
\begin{tabular}{|c||c|c|c|c|c|c|c|c|c|c|c|c|c|c|c|}
\hline
$\;\;{}_{\textstyle k}\backslash{}{\textstyle n}$
& 0 & 1 & 2 & 3 & 4 & 5 & 6 & 7 & 8 & 9 & 10 &11 & 12 &13&14
\\
\hline\hline
\!0 & 1 & 1 & 2 & 5 & 14 & 42 & 132 & 429 & 1430 & 4862 
& 16796 & 58786 & 208012 & 742900 & 2674440
\\ 
\hline
\!1&  &&&& 1 & 6 & 28 & 120 & 495 & 2002 & 8008 & 31824 & 125970 & 497420 & 1961256
\\ 
\hline
\!2&  & &&&&&& 1& 12 & 90 & 550 & 3003 & 15288 & 74256 & 348840
\\ 
\hline
\!3&&&&&&&&&&& 1& 20 & 220 & 1820 & 12740
\\ 
\hline
\!4&&&&&&&&&&&&&& 1 & 30
\\ 
\hline
\end{tabular}
\bigbreak
\caption*{The coefficients $P_{n, n-3k}$ of $P(z,w)$}
\normalsize
\end{table}

\begin{table}[htbp]
\footnotesize
\begin{tabular}{|c||c|c|c|c|c|c|c|c|c|c|c|c|c|c|c|} 
\hline
$\;\;{}_{\textstyle k}\backslash{}{\textstyle n}$ &0 & 1 & 2 & 3 & 4 & 5 & 6 & 7 & 8 & 9 & 10 &11 & 12 &13&14\\
\hline\hline
\!0 & 1 & 1 & 2 & 5 & 14 & 42 & 132 & 429 & 1430 & 4862 & 16796 & 58786 & 208012 & 742900 & 2674440 \\ 
\hline
\!1&  &&&& 1 & 7 & 34 & 147 & 605 & 2431 & 9646 & 38012 & 149226 & 584630 & 2288132\\ 
\hline
\!2&  & &&&&&& 1& 15 & 121 & 758 & 4160 & 21098 & 101660 & 472872\\ 
\hline
\!3&&&&&&&&&&& 1& 26 & 315 & 2710 & 19234\\ 
\hline
\!4&&&&&&&&&&&&&& 1 & 40\\ 
\hline
\end{tabular}

\bigskip

\begin{tabular}{|c||c|c|c|c|c|c|c|c|c|c|c|} 
\hline
$\;\;{}_{\textstyle k}\backslash{}{\textstyle n}$ &15 & 16 & 17 & 18 & 19 & 20 &21 \\
\hline\hline
\!0 & 9694845 & 35357670 & 129644790 & 477638700 & 1767263190 & 6564120420 & 24466267020  \\ 
\hline
\!1& 8951945 & 35023365 & 137058495 & 536568150 & 2101610280 & 8235855870 &32292718290 \\ 
\hline
\!2& 2144397 & 9541895 & 41844935 & 181418250 & 779349480 & 3323000670 & 14081037000 \\ 
\hline
\!3&120887 & 699447 & 200720 & 19892125 & 100274020 & 492017955 & 2362240530\\ 
\hline
\!4& 680 & 7707 & 68875 & 527002 & 3617264 & 22924330 & 136717635 \\ 
\hline
\!5&  & 1 & 57 & 1295 & 18718 & 205953 & 1888162\\ 
\hline
\!6&  &&&& 1 & 77 & 2254\\ 
\hline
\end{tabular}
\bigbreak
\caption*{The coefficients $Q_{n, n-3k}$ of $Q(z,w)$}
\normalsize
\end{table}

\begin{itemize}

\item
The ``diagonal'' $Q_{3k+2, 2}$, beginning $2, 7, 15, 26, 40, 57, 77, 100, 126, \ldots$, is the second pentagonal number sequence, $\frac{1}{2} k' (3k' + 1)$, where $k' = k+1$.  It goes back to Euler; see OEIS~A005449.

\smallbreak\item
Each row $Q_{n, n-3k}$ grows faster than the previous one.
For instance, $Q_{n, n-3}$ dominates $Q_{n, n}$ starting from $n=17$.
We have the following bounds:
\begin{equation*}
 \frac{1}{k}\,\binom{2n-4k}{n-3k-1}\binom{n-2k-1}{k-1}\,
<\,Q_{n, n-3k}\, < \,
 \frac{1}{k}\,\binom{2n-3k}{n-3k-1}\binom{n-2k-1}{k-1}.
 \end{equation*}

\end{itemize}

\section{Dissection generating functions} \label{3dSec}

In preparation for the proofs of our main theorems we collect some results on dissections.  Section~\ref{MGF sec} concerns the \textit{multivariate generating function} (MGF) of the arbitrary dissections.  It provides a convenient tool for Section~\ref{Per sec}, which treats 3-periodic dissections, and more generally, \textit{$\ell$-periodic dissections.}  The results here are all either known or follow easily from well-known techniques, and in Section~\ref{History sec} we point out some of the relevant references.

\subsection{Arbitrary dissections} \label{MGF sec}
We employ the usual multi-index notation.  Write $\N$ for the non-negative integers and $\N^\omega$ for sequences $(m_1, m_2, \ldots)$ in $\N$ which are eventually zero.  Define
\begin{equation*}
   \vecm := (m_1, m_2, m_3, \ldots), \quad
   \vecw := (w_1, w_2, w_3, \ldots), \quad
   \vecw^{\vecm} := w_1^{m_1} w_2^{m_2} w_3^{m_3} \cdots.
\end{equation*}
Of course, for $\vecm \in \N^\omega$, $\vecw^{\vecm}$ is a finite product.  Set
\begin{equation*}   
   | \vecm | := \sum_{r = 1}^\infty m_r, \qquad
   \| \vecm \| := \sum_{r = 1}^\infty r m_r,
\end{equation*}
and use the multinomial coefficient expression
\begin{equation*}
   \binom{j + |\vecm |}{j,\ \vecm} :=
   \binom{j + |\vecm |}{j,\, m_1,\, m_2,\, m_3, \ldots}.
\end{equation*}

\begin{defn}
For $\vecm \in \N^\omega$, an \textit{$\vecm$-dissection} is a dissection such that for all positive integers~$r$, the number of $(r+2)$-cells is $m_r$.
\end{defn}

The reader may check that an $\vecm$-dissection is necessarily a dissection of an $(n+2)$-gon with $m$ cells, where
\begin{equation} \label{vecm nm}
   n = \|\vecm\|, \qquad m = |\vecm|.
\end{equation}

\begin{defn}
Let $D_{\vecm}$ be the number of $\vecm$-dissections, and set $D_{0, 0, 0, \ldots} := 1$.  The MGF of the dissections is the formal power series
\begin{equation} \label{Dissection MGF defn}
   D(\vecw) := \sum_{\vecm \in \N^\omega} D_{\vecm} \vecw^{\vecm}.
\end{equation}
\end{defn}

\begin{prop} \label{Dissection MGF}
The dissection MGF satisfies the recursive functional equation
\begin{equation} \label{DMFE}
   D(\vecw) = 1 + w_1 D(\vecw)^2 + w_2 D(\vecw)^3 + w_3 D(\vecw)^4 + \cdots.
\end{equation}
\end{prop}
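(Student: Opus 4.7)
The plan is to prove~\eqref{DMFE} via the classical root-cell decomposition. In each $(n+2)$-gon with $n \geq 1$, I fix a distinguished boundary edge, called the \emph{root edge} (say the edge between the first and last labelled vertices). Because the vertices of the polygon are already labelled, this choice introduces no overcounting factor: an $\vecm$-dissection is the same combinatorial object as a rooted $\vecm$-dissection. The strategy is then to classify each rooted dissection according to the unique cell containing the root edge.

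Let $C$ denote this root cell, and suppose $C$ is an $(r+2)$-cell for some $r \geq 1$. The $r+1$ remaining edges of $C$ subdivide the complement of $C$ in the big polygon into $r+1$ convex sub-regions, each bounded on one side by a distinct non-root edge of~$C$. Taking that edge as the root edge endows each sub-region with its own rooted dissection, of some type $\vecm^{(i)} \in \N^\omega$, where a sub-region consisting of a single edge (i.e.\ a non-root edge of $C$ that also lies on the boundary of the big polygon) corresponds to the empty dissection of a degenerate ``$2$-gon''. Conversely, given any ordered $(r+1)$-tuple of rooted dissections of (possibly degenerate) smaller polygons, one reconstructs a rooted dissection of an $(n+2)$-gon by gluing them along the non-root edges of a fresh $(r+2)$-cell. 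This establishes a bijection between rooted dissections whose root cell is an $(r+2)$-cell and ordered $(r+1)$-tuples of arbitrary rooted dissections; moreover the multi-index of the combined dissection is $\sum_i \vecm^{(i)}$ plus an extra count of~$1$ at position~$r$, accounting for~$C$ itself.

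Translating this bijection into generating function language, the root cell contributes a marker $w_r$, the ordered $(r+1)$-tuple of sub-dissections contributes $D(\vecw)^{r+1}$, and the degenerate base case $\vecm = 0$ contributes the constant term~$1$. Summing over $r \geq 1$ yields
\begin{equation*}
   D(\vecw) = 1 + \sum_{r \geq 1} w_r D(\vecw)^{r+1},
\end{equation*}
which is exactly~\eqref{DMFE}. The only point requiring care is ensuring that every dissection is recovered exactly once under the inverse construction, which is guaranteed by the uniqueness of the cell containing the root edge. There is no serious obstacle here; this is a standard application of the symbolic method for rooted planar subdivisions, in the spirit of~\cite{FN}.
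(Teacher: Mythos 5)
Your argument is correct and is essentially the same as the paper's: both decompose a dissection by the unique cell containing a distinguished base (root) edge of the labelled $(n+2)$-gon, classify by the size $r+2$ of that cell, and identify the dissection with the ordered $(r+1)$-tuple of sub-dissections attached along the other sides of that cell, yielding $D = 1 + \sum_{r \ge 1} w_r D^{r+1}$. The only cosmetic difference is terminology (``root'' versus ``base'') and that the paper spells out the coefficient bookkeeping in terms of $\vecm(s)$ with $\sum_{s} \vecm(s) = \vecm - e_r$, while you express it as a bijection and then translate to generating functions.
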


\begin{proof}
This formula may be understood via a standard method; see Section~\ref{History sec}.  However, we have been unable to locate it in the literature, and in the course of the proof of our main results we will need a variation of the method.  For these reasons, we include a proof.

Fix some non-zero $\vecm$ and set $n := \|\vecm\|$.  Label the vertices of the $(n+2)$-gon by $0$ to $n+1$, in cyclic order.  Refer to the edge $(n+1, 0)$ as the \textit{base edge,} and in any dissection, refer to the cell containing the base edge as the \textit{base cell.}  The result will follow if we prove that the number of $\vecm$-dissections in which the base cell is an $(r+2)$-cell is equal to the coefficient of $\vecw^{\vecm}$ in $w_r D^{r+1}$.

Given such a dissection, label the vertices of the base cell by $v_0, \ldots, v_{r+1}$, where $0 = v_0 < v_1 < \cdots < v_r < v_{r+1} = n+1$, as in the figure.  For $0 \le s \le r$, consider the sub-dissection induced on the sub-polygon with vertices $v_s, v_s +1, \ldots, v_{s+1} -1, v_{s+1}$,
\begin{figure}[htbp]
\footnotesize
$$
\xymatrix @!0 @R=0.50cm @C=0.73cm
{&&&&&&&&& v_3
\\
&&&&&& \ar@{--}[llld] \circ \ar@{--}[rrr] &&& \bullet \ar@{--}[rrrd]
\\
&&& \circ \ar@{--}[lldd] &&&&&&&&& \circ \ar@{--}[rrdd]
\\
&&&&&&&&&&& \vecm(2)
\\
& \circ \ar@{--}[ldd] &&&&&&&&&&&&& \bullet
\ar@{--}[rdd] \ar@{-}[llllluuu] \ar@{-}[rdddd] & v_2
\\
\!\!\!\! v_{r-1}
\\
\bullet \ar@{--}[dd] \ar@{-}[rdddd] &&&&&&&
\mbox{\ base cell}
&&&&&&&
\mkern-9mu \vecm(1)
& \circ \ar@{--}[dd]
\\
\\
\circ \ar@{--}[rdd] &
\qquad\ \vecm(r-1)
&&&&&&&&&&&&&& \bullet \ar@{--}[ldd] \ar@{-}[llllllddddd]
\\
&&&&&&&&&&&&&&& v_1
\\
& \bullet \ar@{--}[rrdd] \ar@{-}[rrrrrddd] &&&&&&&&&&&
\mkern-27mu \vecm(0)
&& \bullet \ar@{-}[lldd]
\\
& v_r &&&
\vecm(r)
&&&&&&&&&& 2
\\
&&& \bullet \ar@{-}[rrrd] &&&&&&&&& \bullet \ar@{-}[llld]
\\
&&& n &&& \bullet\ar@{-}[rrr] \ar@<0.2mm>@{-}[rrr]
\ar@<-0.2mm>@{-}[rrr] &&& \bullet &&& 1
\\
&&&&&& v_{r+1} = n+1 &&& v_0=0
}
$$
\caption*{Counting dissections recursively
(hollow dots represent sub-dissections)}
\normalsize
\end{figure}
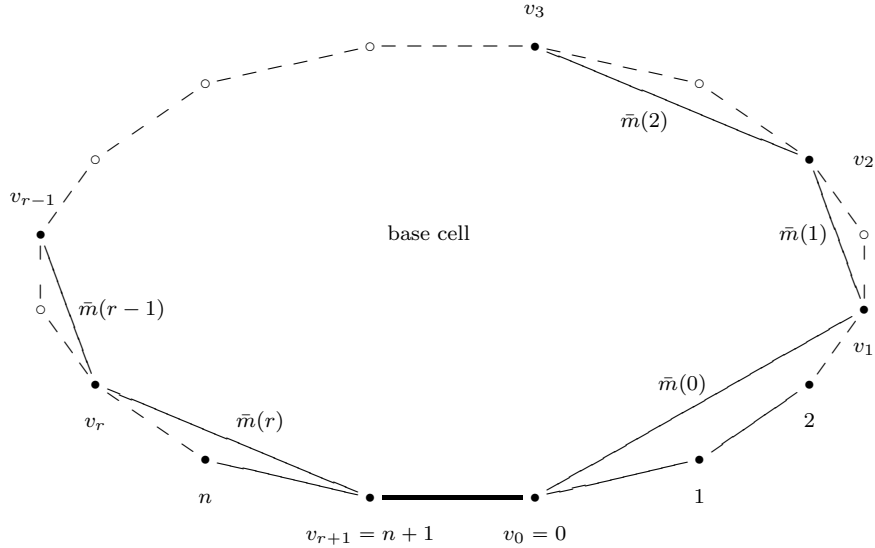
which is attached to the base cell along the chord $(v_s, v_{s+1})$.  It is an $\vecm(s)$-dissection, for some $\vecm(s)$ such that $\|\vecm(s)\| = v_{s+1} - v_s - 1$.  Note that if $v_{s+1} = v_s + 1$, then this sub-dissection is empty and $\vecm(s) = \bar 0$.

Because the entire dissection is an $\vecm$-dissection and the base cell accounts for one $r$-cell, we must have $\sum_0^r \vecm(s) = \vecm - e_r$, where $e_r$ denotes the standard basis vector in $\N^\omega$ with a $1$ in the $r^{\thup}$ spot and $0$'s elsewhere.  Observe that the coefficient of $\vecm - e_r$ in $D^{r+1}$ is the sum of all products $\prod_0^r D_{\vecm(s)}$, taken over all choices of $v_1, \ldots, v_r$, and for each such choice, over all choices of the $\vecm(s)$ such that
\begin{equation} \label{summation set}
   \|\vecm(s)\| = v_{s+1} - v_s - 1
   \quad \mbox{\rm and} \quad
   \sum_0^r \vecm(s) = \vecm - e_r.
\end{equation}
The proposition follows.
\end{proof}

The following result is well-known; see Section~\ref{History sec} for references.  We have included a brief proof in Section~\ref{LBi}, applying Lagrange-B\"urmann inversion to~\eqref{DMFE}.

\begin{thm} \label{D coefficients}
The coefficients of the MGF $D(\vecw)$ are
\begin{equation} \label{Dvm}
   D_{\vecm} = \frac{1}{\|\vecm\| + 1} \binom{\|\vecm\| + |\vecm|}{\|\vecm\|,\ \vecm}.
\end{equation}
\end{thm}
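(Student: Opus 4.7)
The plan is to apply a single univariate Lagrange-Bürmann inversion in an auxiliary tracking variable, in the standard way. Since the MGF $D(\vecw)$ is homogeneous if one assigns the weight $r$ to the variable $w_r$ (recall $\|\vecm\| = \sum r m_r$), the natural first move is to introduce a variable $x$ that tracks $\|\vecm\|$. Define
\begin{equation*}
   E(x, \vecw) := D(w_1 x,\, w_2 x^2,\, w_3 x^3, \ldots)
   = \sum_{\vecm} D_{\vecm} \vecw^{\vecm} x^{\|\vecm\|}.
\end{equation*}
Substituting $w_r \mapsto w_r x^r$ in the functional equation \eqref{DMFE} gives
$E = 1 + \sum_{r \ge 1} w_r x^r E^{r+1}$.

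The second step is to cast this into the standard Lagrange form. Multiplying by $x$ and setting $U := xE$, the relation $x^{r+1} E^{r+1} = U^{r+1}$ yields
\begin{equation*}
   U = x + U \sum_{r \ge 1} w_r U^{r},
   \qquad \text{i.e.,} \qquad
   U = x \phi(U), \quad \phi(U) := \frac{1}{1 - \sum_{r \ge 1} w_r U^r}.
\end{equation*}
Since $\phi(0) = 1$, the univariate Lagrange-Bürmann formula applies (with $\vecw$ treated as parameters), giving
\begin{equation*}
   [x^n]\, U \;=\; \frac{1}{n}\, [U^{n-1}]\, \phi(U)^n.
\end{equation*}

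The final step is to extract coefficients. On the left, $[x^n] U = [x^{n-1}] E = \sum_{\|\vecm\| = n-1} D_{\vecm} \vecw^{\vecm}$. On the right, expanding by the negative binomial and multinomial theorems,
\begin{equation*}
   \phi(U)^n = \sum_{k \ge 0} \binom{n+k-1}{k} \Big(\sum_{r \ge 1} w_r U^r\Big)^k
   = \sum_{\vecm} \binom{n + |\vecm| - 1}{|\vecm|} \binom{|\vecm|}{\vecm} \vecw^{\vecm} U^{\|\vecm\|},
\end{equation*}
so that extracting $U^{n-1}$ selects the $\vecm$ with $\|\vecm\| = n-1$. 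Equating the coefficients of $\vecw^{\vecm}$ on both sides, with $n = \|\vecm\|+1$, produces
\begin{equation*}
   D_{\vecm} = \frac{1}{\|\vecm\|+1} \binom{\|\vecm\| + |\vecm|}{|\vecm|}\binom{|\vecm|}{\vecm},
\end{equation*}
which rearranges into the multinomial form \eqref{Dvm}. I do not foresee a real obstacle: the key insight is simply the change of variable $U = xE$ that absorbs the $x$-dependence on the right-hand side of the functional equation into a pure function of $U$; once that is in place the rest is a routine negative-binomial expansion.
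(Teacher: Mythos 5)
Your proof is correct and follows essentially the same route as the paper: the substitution $w_r \mapsto w_r x^r$ and the change of variable $U = xE$ correspond exactly to the paper's substitution $w_r := v_r z^r$ and $y_D := zD$, after which both arguments apply Lagrange--Bürmann with $\phi(u) = (1 - \sum_r w_r u^r)^{-1}$ and finish with the same negative-binomial/multinomial expansion.
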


\subsection{Periodic dissections} \label{Per sec}
In addition to the generating functions of the 3-periodic dissections, we will need the generating functions of the ``odd dissections'', in which all cells have an odd number of vertices.  In order to present a unified treatment we consider a family of classes of dissections including both types.

\begin{defn}
Let $\ell$ be a positive integer.  A dissection is \textit{$\ell$-periodic} if each of its cells is a $(3 + \ell d)$-cell for some $d \in \N$.
\end{defn}

In particular, the 2-periodic dissections are the odd dissections, and 1-periodic dissections are simply arbitrary dissections.  Denote the UGF and BGF of the $\ell$-periodic dissections, respectively, by $D[\ell](z)$ and $D[\ell](z, w)$.  Thus
\begin{equation*}
   D[\ell](z) := \sum_{n=0}^\infty D[\ell]_n z^n, \qquad
   D[\ell](z, w) := \sum_{n, m = 0}^\infty D[\ell]_{n, m} z^n w^m,
\end{equation*}
where $D[\ell]_n$ is the number of $\ell$-periodic dissections of $(n+2)$-gons, and $D[\ell]_{n, m}$ is the number of such dissections which have $m$ cells.

\begin{prop} \label{Periodic BGF}
The BGF $D[\ell](z, w)$ satisfies
\begin{equation} \label{DlBFE}
   D[\ell](z, w)
   = 1 + wz D[\ell]^2 + wz^{1+\ell} D[\ell]^{2+\ell} + \cdots
   = 1 + \frac{wz D[\ell]^2}{1 - z^\ell D[\ell]^\ell} \,.
\end{equation}
\end{prop}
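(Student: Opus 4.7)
The plan is to deduce Proposition~\ref{Periodic BGF} directly from Proposition~\ref{Dissection MGF} by specializing the multivariate generating function $D(\vecw)$ to the $\ell$-periodic setting, rather than redoing the base-cell decomposition from scratch. An $\ell$-periodic dissection is, by definition, one whose only cells are $(3 + \ell d)$-cells; in the indexing of Section~\ref{MGF sec}, these correspond to the indices $r = 1 + \ell d$ with $d \in \N$. Hence the cell-type vector $\vecm$ of an $\ell$-periodic dissection is supported on the sparse set $\{1,\,1 + \ell,\,1 + 2\ell,\ldots\}$, while all other $\vecm$ are inadmissible.

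The specific substitution I would perform is
\begin{equation*}
   w_r \longmapsto
   \begin{cases} w\,z^r & \text{if } r = 1 + \ell d \text{ for some } d \in \N, \\ 0 & \text{otherwise.} \end{cases}
\end{equation*}
Under this substitution, an admissible monomial $\vecw^{\vecm}$ becomes $w^{|\vecm|} z^{\|\vecm\|} = w^m z^n$ by~\eqref{vecm nm}, and inadmissible monomials vanish. I would first check that the substitution is well defined as a map $\Z[[\vecw]] \to \Z[[z, w]]$: this follows at once from the fact that, for each fixed $n$, only finitely many multi-indices $\vecm$ satisfy $\|\vecm\| = n$, so each coefficient of $z^n w^m$ on the right side is a finite sum. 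With this verified, the substitution sends $D(\vecw)$ to $D[\ell](z, w)$, as desired.

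Applying the substitution to the recursive functional equation~\eqref{DMFE} kills every term except those indexed by $r = 1 + \ell d$, producing
\begin{equation*}
   D[\ell](z, w) \;=\; 1 + \sum_{d \ge 0} w\,z^{1+\ell d}\,D[\ell]^{2+\ell d}
   \;=\; 1 + w z\, D[\ell]^2 \sum_{d \ge 0} \bigl(z^\ell D[\ell]^\ell\bigr)^{d}.
\end{equation*}
Since $z^\ell D[\ell]^\ell$ has $(z)$-adic valuation at least $\ell \ge 1$, the geometric series converges formally, and summing it yields the closed form $1 + wz D[\ell]^2 /(1 - z^\ell D[\ell]^\ell)$ stated in the proposition.

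There is really no conceptual obstacle; the only point requiring a moment's care is the bookkeeping around the infinitely-many-variable substitution, addressed above. As an alternative route, one could instead mimic the base-cell decomposition used in the proof of Proposition~\ref{Dissection MGF}: given an $\ell$-periodic dissection, the base cell must be a $(3 + \ell d)$-cell for some $d \ge 0$, and the $2 + \ell d$ sub-dissections hanging off its non-base edges are themselves $\ell$-periodic; this reproduces the series expansion directly, without invoking Proposition~\ref{Dissection MGF}. Either route occupies only a few lines, and I would favor the specialization argument for its brevity.
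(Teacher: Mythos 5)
Your proposal is correct and takes essentially the same approach as the paper: both specialize the multivariate generating function $D(\vecw)$ via the substitution $w_r \mapsto wz^r$ for $r \equiv 1 \pmod{\ell}$ and $w_r \mapsto 0$ otherwise, then read off the functional equation from~\eqref{DMFE}. Your added remarks on well-definedness and the alternative base-cell route are fine but not needed; the paper simply notes the substitution sends $\vecw^{\vecm}$ to $z^{\|\vecm\|} w^{|\vecm|}$ and concludes.
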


\begin{proof}
Say that a multi-index $\vecm$ is \textit{$\ell$-periodic} if $m_r = 0$ for all $r \not\equiv 1$ mod~$\ell$.  Clearly an $\vecm$-dissection is $\ell$-periodic if and only if $\vecm$ is $\ell$-periodic.    Combine this fact with \eqref{vecm nm} and~\eqref{Dissection MGF defn} to deduce that
\begin{equation} \label{Dlnm from Dvm}
   D[\ell]_{n, m} = \sum \big\{D_{\vecm}: \|\vecm\| = n,\ |\vecm| = m,\
   \vecm \mbox{\rm\ is $\ell$-periodic} \big\}.
\end{equation}

Substituting $w z^r$ for each factor $w_r$ in $\vecw^{\vecm}$ gives $z^{\|\vecm\|} w^{|\vecm|}$.  Therefore $D[\ell](z, w)$ is the series obtained from $D(\vecw)$ by substituting $w z^r$ for $w_r$ when $r \equiv 1$ mod~$\ell$, and $0$ for $w_r$ otherwise.  Hence the proposition follows from~\eqref{DMFE}.
\end{proof}

As in~\eqref{QP U from B}, the UGF is the BGF at $w=1$, and so we obtain:

\begin{cor}
The UGF $D[\ell](z)$ satisfies
\begin{equation} \label{DlUFE}
   D[\ell](z)
   = 1 + z D[\ell]^2 + z^{1+\ell} D[\ell]^{2+\ell} + \cdots
   = 1 + \frac{z D[\ell]^2}{1 - z^\ell D[\ell]^\ell} \,.
\end{equation}
\end{cor}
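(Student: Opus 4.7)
The plan is to derive the corollary directly from Proposition~\ref{Periodic BGF} by specializing the bivariate generating function at $w = 1$, exactly as the remark preceding the statement suggests.

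First, I would note that the UGF is obtained from the BGF via the substitution $w = 1$, in complete analogy with~\eqref{QP U from B}. Indeed, by the definitions of $D[\ell]_n$ and $D[\ell]_{n,m}$ as the number of $\ell$-periodic dissections of $(n+2)$-gons and of those with exactly $m$ cells, respectively, we have
\begin{equation*}
   D[\ell]_n \;=\; \sum_{m=0}^{\infty} D[\ell]_{n, m},
\end{equation*}
so that $D[\ell](z) = D[\ell](z, 1)$. This specialization is well defined on the level of formal power series because for each fixed $n$ only finitely many $D[\ell]_{n, m}$ are nonzero (a dissection of an $(n+2)$-gon can have at most $n$ cells).

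Second, I would substitute $w = 1$ into the identity~\eqref{DlBFE} of Proposition~\ref{Periodic BGF}. Each summand $w z^{1 + \ell d} D[\ell]^{2 + \ell d}$ becomes $z^{1 + \ell d} D[\ell]^{2 + \ell d}$, yielding the first equality in~\eqref{DlUFE}. The second equality follows from the geometric series identity
\begin{equation*}
   \sum_{d \ge 0} \bigl(z^{\ell} D[\ell]^{\ell}\bigr)^{d} \;=\; \frac{1}{1 - z^{\ell} D[\ell]^{\ell}},
\end{equation*}
which is valid in the ring of formal power series in $z$ because $z^{\ell} D[\ell](z)^{\ell}$ has zero constant term (the factor $z^{\ell}$ supplies positive order in $z$, while $D[\ell]$ has constant term~$1$).

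No genuine obstacle arises here: the combinatorial and algebraic content lies entirely in Proposition~\ref{Periodic BGF}, and the corollary is a mechanical specialization at $w = 1$. The only point deserving a moment's thought is the legitimacy of that specialization, which is ensured by the finite-support observation above.
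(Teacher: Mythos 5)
Your proof is correct and follows exactly the paper's own (one-line) argument: the corollary is obtained by setting $w = 1$ in Proposition~\ref{Periodic BGF}, just as in~\eqref{QP U from B}. The brief justification of why this specialization is legitimate (finitely many cells for a fixed $n$) is a harmless addition that the paper leaves implicit.
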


\begin{rems}
\begin{itemize}

\item
The quadratic formula gives a closed form of $D[1](z, w)$:
\begin{equation*}
   D[1](z, w) = \frac{ z + 1 - \sqrt{ z^2 - 2 (2w + 1) z + 1 }}{2 (w + 1) z}.
\end{equation*}

\smallbreak \item
The 3-periodic UGF $D[3](z)$ coincides with OEIS~A301832 up to order~$8$.

\end{itemize}
\end{rems}

\begin{thm} \label{Periodic KC}
For $0 < m \le n$, the coefficient $D[\ell]_{n, m}$ of $D[\ell](z, w)$ is~$0$ unless $m \equiv n$ mod~$\ell$, in which case it is
\begin{equation} \label{Dlnm}
   D[\ell]_{n, m} = \frac{1}{n+1} \binom{m-1 + (n-m)/\ell}{m-1} \binom{n+m}{m}.
\end{equation}
\end{thm}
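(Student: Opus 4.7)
The plan is to combine equation~\eqref{Dlnm from Dvm} with the closed form for $D_{\vecm}$ from Theorem~\ref{D coefficients} and evaluate the resulting sum by a standard generating-function identity.

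First I would handle the vanishing statement.  An $\vecm$-dissection with $\|\vecm\| = n$ and $|\vecm| = m$ satisfies $n - m = \sum_{r \ge 1} (r-1) m_r$.  If $\vecm$ is $\ell$-periodic, only indices $r = 1 + (s-1)\ell$ contribute, so $n - m$ is a non-negative multiple of $\ell$.  Hence $D[\ell]_{n,m} = 0$ unless $m \equiv n$ mod~$\ell$ and $m \le n$.

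Now assume $m \equiv n$ mod~$\ell$ and write $k := (n-m)/\ell \in \N$.  Reindex an $\ell$-periodic $\vecm$ by its (only possibly) nonzero entries: set $n_s := m_{1+(s-1)\ell}$ for $s \ge 1$.  Under this bijection the conditions $\|\vecm\|=n$ and $|\vecm|=m$ become
\begin{equation*}
   \sum_{s \ge 1} n_s = m, \qquad \sum_{s \ge 1} (s-1)\, n_s = k,
\end{equation*}
while Theorem~\ref{D coefficients} gives
\begin{equation*}
   D_{\vecm} \;=\; \frac{1}{n+1}\, \frac{(n+m)!}{n!\,\prod_{s \ge 1} n_s!}\,.
\end{equation*}
Plugging into \eqref{Dlnm from Dvm} and factoring out the quantities that do not depend on $\vecm$ yields
\begin{equation*}
   D[\ell]_{n,m} \;=\; \frac{1}{n+1}\,\binom{n+m}{m}\,\cdot\, m! \sum_{(n_s)} \frac{1}{\prod_{s \ge 1} n_s!}\,,
\end{equation*}
where the sum is over eventually zero sequences $(n_1, n_2, \ldots) \in \N^{\omega}$ satisfying the two constraints above.

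The final step is the combinatorial identity
\begin{equation*}
   m! \sum_{\substack{\sum n_s = m\\ \sum (s-1)n_s = k}} \frac{1}{\prod_{s \ge 1} n_s!}
   \;=\; [t^k]\, (1 + t + t^2 + \cdots)^m
   \;=\; [t^k]\, (1-t)^{-m}
   \;=\; \binom{m-1+k}{m-1}.
\end{equation*}
The first equality is the multinomial theorem applied with $x_s := t^{s-1}$, tracking the exponent $\sum (s-1) n_s$ by $t$ and collapsing via $\sum n_s = m$.  Substituting $k = (n-m)/\ell$ gives the stated formula
\begin{equation*}
   D[\ell]_{n,m} \;=\; \frac{1}{n+1}\,\binom{m-1 + (n-m)/\ell}{m-1}\binom{n+m}{m}.
\end{equation*}
There is no real obstacle here; the only point demanding care is the reindexing from $\vecm$ to $(n_s)$ and the corresponding translation of the two constraints $\|\vecm\|=n$ and $|\vecm|=m$, after which the generating-function identity for $(1-t)^{-m}$ closes the argument.
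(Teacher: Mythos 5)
Your proof is correct, and it is genuinely different from the paper's. The paper proves Theorem~\ref{Periodic KC} by applying Lagrange--B\"urmann inversion directly to the functional equation~\eqref{DlBFE} (this is Proposition~\ref{Gen Per KC} at $e=1$), never touching the MGF coefficients $D_{\vecm}$. You instead feed the Kirkman--Cayley closed form~\eqref{Dvm} into the reduction~\eqref{Dlnm from Dvm} and collapse the resulting sum by the multinomial theorem read as $[t^k]\,(1-t)^{-m}$. What makes this notable is that the paper explicitly remarks, just below the theorem statement, that ``Combining \eqref{Dvm} and~\eqref{Dlnm from Dvm} gives $D[\ell]_{n,m}$ as a sum, but this sum does not imply \eqref{Dlnm} in any obvious way.'' Your computation shows that it does: the reindexing $n_s := m_{1+(s-1)\ell}$ converts the constraints $\|\vecm\|=n$, $|\vecm|=m$ into $\sum n_s = m$, $\sum (s-1)n_s = (n-m)/\ell$, and the multinomial sum becomes exactly the coefficient extraction $[t^{(n-m)/\ell}](1-t)^{-m} = \binom{m-1+(n-m)/\ell}{m-1}$. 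Your route is more elementary in that it avoids Lagrange--B\"urmann at the bivariate level (the only inversion needed is the one already buried in the proof of the classical formula~\eqref{Dvm}, which you are entitled to cite), but it does not generalize to arbitrary powers $D[\ell]^e$ the way Proposition~\ref{Gen Per KC} does, which the paper needs later for Lemma~\ref{Pe}.
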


This result will be proven in Section~\ref{LBi}.  However, let us make two remarks:

\begin{itemize}

\item
It is easy to see why $D[\ell](z, w)$ is~$0$ for $m \not\equiv n$ mod~$\ell$: it follows from the fact that if $\vecm$ is $\ell$-periodic, then $\|\vecm\| \equiv |\vecm|$ mod~$\ell$, because
\begin{equation*}
   \|\vecm\| - |\vecm| = \sum_r (r-1)m_r.
\end{equation*}
(Note that this generalizes Lemma~\ref{T(m)}(ii) from $D[3]$ to $D[\ell]$.)

\smallbreak \item
Combining \eqref{Dvm} and~\eqref{Dlnm from Dvm} gives $D[\ell]_{n, m}$ as a sum, but this sum does not imply \eqref{Dlnm} in any obvious way.

\end{itemize}

It is often convenient to reformulate \eqref{Dlnm} as follows: for $n > 0$, $D[\ell]_{n, m} = 0$ unless $m = n - \ell k$ for some non-negative integer $k < n/\ell$, and
\begin{equation} \label{alt periodic KC}
   D[\ell]_{n, n - \ell k} = \frac{1}{n+1} \binom{n - (\ell-1)k - 1}{k} \binom{2n - \ell k}{n - \ell k}.
\end{equation}

Applying $D[\ell](z) = D[\ell](z, 1)$ gives a formula for $D[\ell]_n$:

\begin{cor}
For $n > 0$, the coefficients $D[\ell]_n$ of the UGF $D[\ell](z)$ are
\begin{equation*}
   D[\ell]_n = \frac{1}{n+1} \sum_{0 \le k < n/\ell}
   \binom{n - (\ell-1)k - 1}{k} \binom{2n - \ell k}{n - \ell k}.
\end{equation*}
\end{cor}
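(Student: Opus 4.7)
The plan is to derive the corollary as an immediate consequence of Theorem~\ref{Periodic KC}, specifically using its reformulated form~\eqref{alt periodic KC}, together with the relation between the UGF and BGF obtained by evaluating at $w = 1$.

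First I would observe that setting $w = 1$ in the BGF definition $D[\ell](z, w) = \sum_{n, m} D[\ell]_{n, m} z^n w^m$ gives $D[\ell](z) = D[\ell](z, 1)$, and hence
\begin{equation*}
   D[\ell]_n = \sum_{m = 0}^n D[\ell]_{n, m}.
\end{equation*}
This is the same UGF-from-BGF specialization used in~\eqref{QP U from B} and already invoked in the proof of the corollary giving the functional equation \eqref{DlUFE}.

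Next I would invoke the reformulation~\eqref{alt periodic KC} of Theorem~\ref{Periodic KC}, which states that for $n > 0$, the coefficient $D[\ell]_{n, m}$ vanishes unless $m = n - \ell k$ for some integer $k$ with $0 \le k < n/\ell$, in which case
\begin{equation*}
   D[\ell]_{n, n - \ell k} = \frac{1}{n+1} \binom{n - (\ell-1)k - 1}{k} \binom{2n - \ell k}{n - \ell k}.
\end{equation*}
Reindexing the sum over $m$ as a sum over $k$ via $m = n - \ell k$, and noting that only values of $k$ in $[0, n/\ell)$ contribute, yields
\begin{equation*}
   D[\ell]_n = \sum_{0 \le k < n/\ell} D[\ell]_{n, n - \ell k}
   = \frac{1}{n+1} \sum_{0 \le k < n/\ell}
   \binom{n - (\ell-1)k - 1}{k} \binom{2n - \ell k}{n - \ell k},
\end{equation*}
which is precisely the asserted formula.

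There is no real obstacle here: the corollary is a purely formal consequence of the BGF result, and the only content is the change of summation variable from $m$ to $k$. Accordingly, the proof will be a single short paragraph, with the only caveat being that Theorem~\ref{Periodic KC} itself is proven later (in Section~\ref{LBi}) via Lagrange-B\"urmann inversion, so strictly speaking this corollary should be regarded as a consequence of that later proof.
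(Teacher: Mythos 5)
Your proposal is correct and matches the paper's proof exactly: the paper simply notes ``Applying $D[\ell](z) = D[\ell](z,1)$ gives a formula for $D[\ell]_n$'' and cites the reformulation~\eqref{alt periodic KC} of Theorem~\ref{Periodic KC}, which is precisely the one-line computation you carry out.
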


We will discuss $D[\ell](z)$ from an analytic standpoint in Section~\ref{Puiseux}.

\begin{rem}
Because $Q$ counts 3-periodic quiddities and $D[3]$ counts 3-periodic dissections, it is clear that the coefficients of $Q$ are majorized by those of $D[3]$.  We will see in Section~\ref{PrThmFMSec} that $P$ counts only certain 3-periodic quiddities, so its coefficients are majorized by those of $Q$.  An examination of \eqref{alt periodic KC} shows that $D[\ell]_{n, n - \ell k}$ decreases as $\ell$ increases, and comparison with \eqref{Pnm formula} shows that $P_{n, n-3k}$ majorizes $D[4]_{n, n-4k}$.  Thus we have
\begin{equation*}
   D[1]_{n, n-k} \ge D[2]_{n, n-2k} \ge D[3]_{n, n-3k} 
   \ge Q_{n, n-3k} \ge P_{n, n-3k} \ge D[4]_{n, n-4k} \ge \cdots,
\end{equation*}
\begin{equation} \label{coefficient comparison}
   D[1]_n \ge D[2]_n \ge D[3]_n 
   \ge Q_n \ge P_n \ge D[4]_n \ge \cdots \ge C_n.
\end{equation}
\end{rem}

\subsection{Historical remarks} \label{History sec}

The pictorial argument we have used to prove \eqref{DMFE} is a special case of the \textit{symbolic enumeration method;} see for example Section~0.1 of \cite{FN}.  In Section~3.1 of the same paper the authors use it to give a derivation of \eqref{DlBFE} for $\ell = 1$; our argument for \eqref{DMFE} is essentially the same.  See Section~7.1 of \cite{Ben} for another relevant example, giving the UGF of the dissections such that the number of sides of each cell lies in any prescribed subset of $\{3, 4, 5, \ldots\}$.

A proof of \eqref{Dvm} by bijection may be found in Corollary~4.2 of \cite{Gai}.  The formula also appears in \cite{GJ}, as Exercise~2.7.14, and in \cite{DR}.

For $\ell = 1$, \eqref{Dlnm} is known as the \textit{Kirkman-Cayley formula,} as it was conjectured in~\cite{Kir} and proven in~\cite{Cay}.  (It was also stated as a question in~\cite{Pro}.)  Proofs using generating functions may be found in \cite{FN, Rea}, and proofs by bijection are given in \cite{Gai, PS, Sta96}.  Our proof for arbitrary~$\ell$ is a straightforward generalization of the argument given in \cite{FN}.

\section{Lagrange-B\"urmann inversion} \label{LBi}

We now apply Lagrange-B\"urmann inversion to prove Theorems~\ref{TheMainCountThm}, \ref{D coefficients}, and~\ref{Periodic KC} (in reverse order).  Suppose that $\phi(u)$ is a formal power series in $u$ with a non-zero constant term.  Then there is clearly a unique formal series solution $y(z)$ of the functional equation $y(z) = z \big(\phi \circ y(z)\big)$.  Lagrange-B\"urmann inversion gives the coefficients of $y(z)$.  More generally, if $\psi(u)$ is any formal series, it gives the coefficients of $\psi \circ y(z)$.  The result is
\begin{equation} \label{LB}
   (n+1) [z^{n+1}] (\psi \circ y) = [u^n] (\psi' \phi^{n+1}),
\end{equation}
where $[x^i]f$ denotes the coefficient of $x^i$ in a formal series $f(x)$.

This is a well-known classical theorem; for further discussion and references, see \cite{Com, FN}.  Let us briefly outline the proof.  Because $[z^{n+1}] (\psi \circ y)$ depends only on the initial terms of $\phi$ and $\psi$, we may take them to be polynomial.  This gives
\begin{equation*}
   2 \pi i (n+1) [z^{n+1}] (\psi \circ y) =
   2 \pi i [z^n] (\psi \circ y)' =
   {\oint}_{\!\!0} \frac{\psi' \circ y}{z^{n+1}} y' dz.
\end{equation*}
The lowest non-zero term of the series $y(z)$ is linear, so we may apply the change of variables $u = y(z)$.  Combine this with the fact that $z = y / (\phi \circ y)$ and continue the above equation as follows to complete the proof:
\begin{equation*}
   {\oint}_{\!\!0} \frac{\psi'(u)}{z^{n+1}} du =
   {\oint}_{\!\!0} \frac{\psi' \phi^{n+1}}{u^{n+1}} du =
   2 \pi i [u^n] (\psi' \phi^{n+1}).
\end{equation*}

We will frequently need a special case of~\eqref{LB}: take $\psi(u) = u^e$ and substitute $n+e-1$ for $n$ to obtain
\begin{equation} \label{LBe}
   (n+e) [z^{n+e}] y^e = e [u^n] \phi^{n+e}.
\end{equation}

\begin{prop} \label{Gen Per KC}
For any positive integer~$e$,
\begin{equation*}
   D[\ell](z, w)^e = 1 + \sum_{\substack{n,\, k: \\ 0 \le k < n/\ell}}
   \frac{e}{n+e} \binom{n - (\ell-1)k - 1}{k} \binom{2n - \ell k + e - 1}{n - \ell k}
   z^n w^{n-\ell k}.
\end{equation*}
\end{prop}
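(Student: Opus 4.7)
The plan is to apply Lagrange-B\"urmann inversion~\eqref{LBe} to~\eqref{DlBFE}. First I would recast~\eqref{DlBFE} in the required form $y = z\phi(y)$. The natural substitution is $y := z D[\ell]$; multiplying~\eqref{DlBFE} by $z$ and clearing the denominator $1 - z^\ell D[\ell]^\ell = 1 - y^\ell$, a short algebraic manipulation yields
$$y \,=\, z\,\phi(y), \qquad \phi(u) \,:=\, \frac{1-u^\ell}{1-u^\ell - wu},$$
with $w$ treated as a parameter. Since $\phi(0) = 1 \ne 0$, Lagrange-B\"urmann inversion applies.

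Because $D[\ell]^e = y^e / z^e$, coefficient extraction gives $[z^n w^m] D[\ell]^e = [z^{n+e} w^m] y^e$, and~\eqref{LBe} with $\psi(u) = u^e$ reduces the problem to
$$[z^n w^m]\, D[\ell]^e \,=\, \frac{e}{n+e}\,[u^n w^m]\,\phi^{n+e}.$$

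The remaining step is to evaluate the bivariate coefficient $[u^n w^m]\phi^{n+e}$. Rewriting $\phi^{n+e} = \bigl(1 - wu/(1-u^\ell)\bigr)^{-(n+e)}$ and expanding by the negative binomial theorem yields
$$\phi^{n+e} \,=\, \sum_{j \ge 0}\,\binom{n+e+j-1}{j}\,(wu)^j\,(1-u^\ell)^{-j}.$$
Picking off the coefficient of $w^m$ selects the single term $j = m$, leaving $\binom{n+e+m-1}{m}\,u^m\,(1-u^\ell)^{-m}$. A second application of the negative binomial theorem identifies $[u^{n-m}](1-u^\ell)^{-m}$ as $\binom{m+k-1}{k} = \binom{n-(\ell-1)k-1}{k}$ exactly when $n-m = \ell k$ for some $k \ge 0$, and $0$ otherwise.

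Assembling the pieces, the non-vanishing coefficients occur precisely at $m = n - \ell k$ and match
$$\frac{e}{n+e}\,\binom{n-(\ell-1)k-1}{k}\,\binom{2n-\ell k + e - 1}{n - \ell k},$$
which is the claimed formula. The range $0 \le k < n/\ell$ encodes $k \ge 0$ together with $m \ge 1$ (a dissection of a proper polygon has at least one cell). The only genuinely non-mechanical step is spotting that $y := zD[\ell]$ is the right substitution to put~\eqref{DlBFE} into Lagrange-B\"urmann form; every subsequent step is a direct coefficient expansion.
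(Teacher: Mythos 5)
Your proof is correct and follows essentially the same route as the paper: the same substitution $y = zD[\ell]$, the same $\phi(u) = (1 - wu/(1-u^\ell))^{-1}$, and the same two-stage negative binomial expansion after applying Lagrange--B\"urmann inversion in the form~\eqref{LBe}. The only cosmetic difference is that you extract $[z^n w^m]$ simultaneously whereas the paper extracts $[z^n]$ while carrying $w$ as a parameter.
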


\begin{proof}
It suffices to prove the following generalization of~\eqref{alt periodic KC}: for $n > 0$,
\begin{equation*}
   (n+e) [z^n] D[\ell]^e(z, w) = e \sum_{0 \le k < n/\ell}
   \binom{n - (\ell-1)k - 1}{k} \binom{2n - \ell k + e - 1}{n - \ell k} w^{n - \ell k}.
\end{equation*}

Let $y_\ell(z, w)$ be the shifted BGF $z D[\ell](z, w)$.  Regarding $w$ as a parameter, multiply \eqref{DlBFE} by $z$ and rearrange to obtain $y_\ell = z (\phi_\ell \circ y_\ell)$, where
\begin{equation*}
   \phi_\ell(u) := \left( 1 - \frac{w u}{1 - u^\ell} \right)^{-1}.
\end{equation*}
Applying~\eqref{LBe}, we have
\begin{equation*}
   (n+e) [z^n] D[\ell]^e(z, w) =
   (n+e) [z^{n+e}] y_\ell^e(z, w) =
   e [u^n] \phi_\ell^{n+e}(u, w).
\end{equation*}
Use $(1-x)^{-(n+e)} = \sum_{i=0}^\infty \binom{n+e+i-1}{i} x^i$ and the assumption $n > 0$ to obtain
\begin{align*}
   & [u^n] \left( 1 - \frac{w u}{1 - u^\ell} \right)^{-(n+e)} =\
   [u^n] \sum_{i=1}^\infty \binom{n+e+i-1}{i} \left( \frac{w u}{1 - u^\ell} \right)^i \\[6pt]
   & =\ \sum_{i=1}^\infty \binom{n+e+i-1}{i} w^i [u^{n-i}] (1 - u^\ell)^{-i} \\[6pt]
   & =\ \sum_{i=1}^\infty \sum_{k=0}^\infty \binom{n+e+i-1}{i} \binom{i+k-1}{k} w^i [u^{n-i}] u^{\ell k}.
\end{align*}
The sum over $i$ contributes only at $i = n - \ell k$, completing the proof.
\end{proof}

\medbreak \noindent
\textit{Proof of Theorem~\ref{Periodic KC}.}
Apply Proposition~\ref{Gen Per KC} at $e = 1$.

\medbreak \noindent
\textit{Proof of Theorem~\ref{D coefficients}.}
In order to use~\eqref{LB}, we substitute $w_r := v_r z^r$ in $D(\vecw)$.  Then~\eqref{DMFE} becomes
\begin{equation*}
   D(\vecw) = 1 + v_1 z D^2 + v_2 z^2 D^3 + \cdots,
\end{equation*}
and~\eqref{Dvm} is equivalent to
\begin{equation*}
   (n+1) [z^n] D(\vecw) = \sum_{\{\vecm:\, \|\vecm\| = n\}}
   \binom{n + |\vecm|}{n,\ \vecm} \vecv^{\vecm}.
\end{equation*}

Define $y_D(z, \vecv) := z D(\vecw)$ and check that
\begin{equation*}
   y_D(z, \vecv) = z (1 - v_1 y_D - v_2 y_D^2 - v_3 y_D^3 - \cdots)^{-1}.
\end{equation*}
Thus $y_D(z, \vecv) = z \big(\phi_D \circ y_D(z, \vecv)\big)$, where $\phi_D(u, \vecv) := (1 - v_1 u - v_2 u^2 - \cdots)^{-1}$, a well-defined formal series in~$u$.

Apply~\eqref{LBe} with $e=1$:
\begin{equation*}
   (n+1) [z^n] D(\vecw) =
   (n+1) [z^{n+1}] y_D(z, \vecv) =
   [u^n] \phi_D^{n+1}(u, \vecv).
\end{equation*}
Following the argument used for Proposition~\ref{Gen Per KC}, this becomes
\begin{equation*}
   [u^n] (1 - v_1 u - v_2 u^2 - \cdots)^{-(n+1)} =
   \sum_{i=0}^\infty \binom{n+i}{i} [u^n] (v_1 u + v_2 u^2 + \cdots)^i.
\end{equation*}
To complete the proof, note that
\begin{equation*}
   [u^n] (v_1 u + v_2 u^2 + \cdots)^i =
   \sum_{\{\vecm:\, \|\vecm\| = n,\, |\vecm| = i\}}
   \binom{i}{\vecm} \vecv^{\vecm}
\end{equation*}
and $\binom{n+i}{i} \binom{i}{\vecm} = \binom{n+i}{n,\, \vecm}$.
\hfill $\Box$

\medbreak
The properties of $P(z, w)$ needed to prove Theorem~\ref{TheMainCountThm} do not follow directly from~\eqref{LB}, but rather from Proposition~\ref{Gen Per KC} for $D[2]$ combined with a certain relationship between $P$ and $D[2]$.  Define $\tilde D[2](z, w)$ by
\begin{equation*}
   \tilde D[2](z, w) := D[2](z^{3/2}, wz^{-1/2}),
\end{equation*}
and use Proposition~\ref{Gen Per KC} to see that it is a formal series in $z$ and $w$ with non-negative integral exponents.  We begin with two preparatory lemmas.

\begin{lem} \label{tilde D2}
$P(z, w) = \tilde D[2](z, w)$.
\end{lem}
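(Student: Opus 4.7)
The plan is a direct substitution argument. Starting from the functional equation \eqref{DlBFE} at $\ell = 2$,
\begin{equation*}
   D[2](z, w) = 1 + \frac{wz\, D[2](z,w)^2}{1 - z^{2} D[2](z,w)^{2}},
\end{equation*}
I will substitute $z \mapsto z^{3/2}$ and $w \mapsto w z^{-1/2}$. A quick check shows that the factor $wz^{-1/2} \cdot z^{3/2} = wz$ appears in the numerator and the factor $(z^{3/2})^{2} = z^{3}$ appears in the denominator, so the substituted equation reads
\begin{equation*}
   \tilde D[2](z, w) = 1 + \frac{w z\, \tilde D[2](z,w)^{2}}{1 - z^{3}\, \tilde D[2](z,w)^{2}},
\end{equation*}
which is exactly \eqref{P recurrence}.

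Before concluding, I would verify two points. First, that $\tilde D[2](z, w)$ is genuinely a formal power series in $z$ and $w$ with non-negative integer exponents, so that the substitution makes sense. By Proposition~\ref{Gen Per KC} with $\ell = 2$ (equivalently Theorem~\ref{Periodic KC}), the nonzero coefficients $D[2]_{n, m}$ occur only for $m \equiv n$ mod~$2$, that is, $m = n - 2k$; under the substitution, the monomial $z^{n} w^{n-2k}$ becomes
\begin{equation*}
   z^{3n/2} (wz^{-1/2})^{n-2k} = z^{n+k}\, w^{n-2k},
\end{equation*}
which has integer exponents. Second, that the functional equation \eqref{P recurrence} has a unique solution in the ring of formal power series with constant term~$1$: writing $P = 1 + \sum_{n \ge 1} P_{n}(w) z^{n}$ and expanding, the coefficient of $z^{n}$ in the right-hand side depends only on the coefficients $P_{j}(w)$ with $j < n$, so the recursion determines $P$ uniquely from $P_{0}(w) = 1$.

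These two facts together force $P(z, w) = \tilde D[2](z, w)$.

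I do not anticipate a serious obstacle here: once the substitution is carried out mechanically, the only delicate point is tracking fractional exponents long enough to see that they cancel against the parity constraint $m \equiv n$ mod~$2$ built into $D[2]$. The real content is the elegant coincidence that the $\ell = 2$ periodic dissection generating function, under a half-integer rescaling, satisfies exactly the recurrence defining~$P$; interpreting this bijectively is presumably what is postponed to Section~\ref{PrThmFMSec}.
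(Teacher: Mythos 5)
Your argument is the same as the paper's: substitute $z \mapsto z^{3/2}$, $w \mapsto wz^{-1/2}$ into the $\ell=2$ case of \eqref{DlBFE} and observe that the result is exactly \eqref{P recurrence}. The two verification points you add (integrality of exponents via the parity constraint on $D[2]$, and uniqueness of the formal solution of \eqref{P recurrence}) are correct and are exactly the implicit checks the paper relies on, with the first noted just before the lemma.
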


\begin{proof}
By~\eqref{DlBFE}, $D[2](z, w) = 1 + wzD[2]^2 / (1 - z^2 D[2]^2)$, and so $\tilde D[2]$ satisfies
\begin{equation*}
   \tilde D[2](z, w) = 1 + wz \tilde D[2]^2 / (1 - z^3 \tilde D[2]^2).
\end{equation*}
This is the same recursive functional equation \eqref{P recurrence} defining $P(z, w)$.
\end{proof}

\begin{lem} \label{Pe}
For any positive integer~$e$,
\begin{equation*}
   P^e(z, w) = 1 + \sum_{\substack{n,\, k: \\ 0 \le k < n/3}}
   \frac{e}{n-k+e} \binom{n - 2k - 1}{k} \binom{2n - 4k + e - 1}{n - 3k}
   z^n w^{n-3k}.
\end{equation*}
\end{lem}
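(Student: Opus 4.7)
The plan is to deduce Lemma~\ref{Pe} directly from Proposition~\ref{Gen Per KC} (specialized to $\ell=2$) together with Lemma~\ref{tilde D2}, via the substitution of variables that relates $P(z,w)$ to $D[2](z,w)$.

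First, by Lemma~\ref{tilde D2} we have $P(z,w) = \tilde D[2](z,w) = D[2](z^{3/2}, w z^{-1/2})$, hence $P^e(z,w) = D[2]^e(z^{3/2}, w z^{-1/2})$ for every positive integer~$e$. Next, I would write out Proposition~\ref{Gen Per KC} at $\ell = 2$: the general term in $D[2]^e(z,w)$ is
\begin{equation*}
   \frac{e}{n' + e}\binom{n' - k' - 1}{k'}\binom{2n' - 2k' + e - 1}{n' - 2k'} z^{n'} w^{n' - 2k'},
\end{equation*}
indexed by $0 \le k' < n'/2$.

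Then I would perform the substitution $z \mapsto z^{3/2}$, $w \mapsto w z^{-1/2}$. The resulting monomial becomes $z^{3n'/2 - (n' - 2k')/2} w^{n' - 2k'} = z^{n' + k'} w^{n' - 2k'}$, so the exponents of $z$ and $w$ are automatically non-negative integers (which is the consistency check underlying Lemma~\ref{tilde D2}). Reindexing by $n := n' + k'$ and $k := k'$ — equivalently $n' = n - k$ — the condition $0 \le k' < n'/2$ becomes $2k < n - k$, i.e.\ $0 \le k < n/3$, and the exponent of $w$ becomes $n - 3k$. Substituting $n' = n - k$ into the coefficient yields exactly
\begin{equation*}
   \frac{e}{n - k + e}\binom{n - 2k - 1}{k}\binom{2n - 4k + e - 1}{n - 3k},
\end{equation*}
which is the stated formula, with the constant term~$1$ coming from the constant term of $D[2]^e$.

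The main step is really just to verify that the exponent $3n'/2 + (-1/2)(n' - 2k') = n' + k'$ is an integer and that the reindexing $n = n' + k'$ is a bijection between $\{(n', k'): 0 \le k' < n'/2\}$ and $\{(n, k): 0 \le k < n/3\}$; everything else is mechanical substitution. There is no genuine obstacle, since the hard analytic content (Lagrange--B\"urmann inversion) has already been used in establishing Proposition~\ref{Gen Per KC}.
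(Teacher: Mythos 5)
Your proof is correct and follows essentially the same route as the paper: invoke Proposition~\ref{Gen Per KC} at $\ell=2$, apply Lemma~\ref{tilde D2} (i.e.\ the substitution $z \mapsto z^{3/2}$, $w \mapsto w z^{-1/2}$), and reindex via $n = n' + k'$, $k = k'$. The only difference is notational (primes instead of tildes).
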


\begin{proof}
By Proposition~\ref{Gen Per KC} and the definition of $\tilde D[2](z, w)$,
\begin{equation*}
   \tilde D[2]^e = 1 +
   \sum_{\substack{\tilde n,\, \tilde k: \\ 0 \le \tilde k < \tilde n/2}}
   \frac{e}{\tilde n+e} \binom{\tilde n - \tilde k - 1}{\tilde k}
   \binom{2 \tilde n - 2 \tilde k + e - 1}{\tilde n - 2 \tilde k}
   (z^{3/2})^{\tilde n} (w z^{-1/2})^{\tilde n - 2 \tilde k}.
\end{equation*}
Hence the lemma follows from Lemma~\ref{tilde D2} and the substitution $n = \tilde n + \tilde k$, $k = \tilde k$.
\end{proof}

\medbreak \noindent
\textit{Proof of Theorem~\ref{TheMainCountThm}.}
For~\eqref{Pnm formula}, apply Lemma~\ref{Pe} at $e=1$.  For~\eqref{Qnm formula}, restate Lemma~\ref{Pe} as follows: for $0 \le 3k' < n'$, the coefficient of $z^{n'} w^{n'-3k'}$ in $P^{e'}$ is
\begin{equation*}
   (P^{e'})_{n', n'-3k'} = \frac{e'}{n'-k'+e'}
   \binom{n' - 2k' - 1}{k'} \binom{2n' - 4k' + e' - 1}{n' - 3k'}.
\end{equation*}

Now apply Theorem~\ref{DiffQThm} to obtain
\begin{equation*}
   Q_{n, n-3k} = \sum_{j=0}^k (P^{3j+2})_{n-3j-1, n-3k-1}
\end{equation*}
for $n > 0$.  Substituting $n' = n-3j-1$, $k' = k-j$, and $e' = 3j+2$ in the formula for $(P^{e'})_{n', n'-3k'}$ and then replacing $k-j$ by $s$ completes the proof.
\hfill $\Box$

\section{Asymptotic estimates} \label{Puiseux}

Here we use a classical strategy presented in Section~4 of \cite{FN} to prove Theorem~\ref{TotQasymp} and give a conjectural asymptotic estimate of the coefficients of the periodic dissection UGF $D[\ell](z)$.  The conjecture depends on the distribution of the roots of a certain polynomial of degree~$2\ell$ and may be checked with software for any particular~$\ell$; we have verified it for $\ell \le 16$.

Suppose that $F(z, y)$ is a real polynomial such that
\begin{equation*}
   F(0, 0) = 0, \qquad \partial_y F(0, 0) \not= 0.
\end{equation*}
Let $y(z)$ be the branch of the graph of $F(z, y) = 0$ passing through the origin, i.e., the analytic function such that $y(0) = 0$ and $F(z, y(z)) = 0$.  Recall that
\begin{equation} \label{y prime}
   y'(z) = -(\partial_z F / \partial_y F)|_{(z, y(z))}.
\end{equation}

\begin{thm} \label{Darboux}
Let $\sum_{n=1}^\infty b_n z^n$ be the Maclaurin series of $y(z)$.  Make the following assumptions concerning it:
\begin{enumerate}

\item[(i)]
The coefficients $b_n$ are non-negative real numbers.

\smallbreak \item[(ii)]
The radius of convergence of the series is $\rho < \infty$.

\smallbreak \item[(iii)]
$\rho$ is the unique singularity of $y(z)$ of magnitude $\rho$.

\smallbreak \item[(iv)]
$\lim_{z \to \rho^-} y = \nu < \infty$.

\smallbreak \item[(v)]
$\partial_z F(\rho, \nu) \not= 0$ and $\partial_y^2 F(\rho, \nu) \not= 0$.

\end{enumerate}

Then asymptotically,
$\displaystyle b_n = \frac{\gamma \rho^{-n}}{\sqrt{\pi}\, n^{3/2}}
  \Big( 1 + O \Big(\frac{1}{n}\Big) \Big)$,
where
$\displaystyle \gamma = \sqrt{ \frac{\rho\, \partial_z F}
  {2\, \partial_y^2 F}} \,\Bigg|_{(\rho, \nu)} > 0$.
\end{thm}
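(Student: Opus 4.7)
The plan is to show that $y(z)$ has a square-root singularity at $z = \rho$ via local analysis of $F(z, y) = 0$ near $(\rho, \nu)$, and then to extract coefficient asymptotics by the transfer theorem for algebraic singularities (see \cite{FN}, Section~4). As a preliminary I would show that $\partial_y F(\rho, \nu) = 0$. By (i) and (ii), Pringsheim's theorem guarantees that $\rho$ is a genuine singularity of $y$. If instead $\partial_y F(\rho, \nu) \neq 0$, the analytic implicit function theorem would produce $\widetilde y$ analytic near $\rho$ with $\widetilde y(\rho) = \nu$ and $F(z, \widetilde y(z)) = 0$; by (iv), $y$ extends continuously to $\rho$ with value $\nu$, and the unique-branch statement of the implicit function theorem then forces $y = \widetilde y$ on a left-neighborhood of $\rho$, extending $y$ analytically through $\rho$ and contradicting that $\rho$ is a singularity.

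Next I would carry out the local expansion of the curve $F(z, y) = 0$ at $(\rho, \nu)$. Writing $A := \partial_z F(\rho, \nu)$ and $B := \partial_y^2 F(\rho, \nu)$, both non-zero by (v), we have
\begin{equation*}
   F(z, y) = A\,(z - \rho) + \tfrac{1}{2} B\,(y - \nu)^2 + R(z, y),
\end{equation*}
where $R(z, y) = O\bigl((z-\rho)^2\bigr) + O\bigl((z-\rho)(y-\nu)\bigr) + O\bigl((y-\nu)^3\bigr)$. Solving $F(z, y(z)) = 0$ to leading order gives
\begin{equation*}
   (y(z) - \nu)^2 = \frac{2 \rho A}{B} \left(1 - \frac{z}{\rho}\right) \bigl(1 + o(1)\bigr).
\end{equation*}
The non-negativity of the $b_n$ forces $y$ to be strictly increasing on $[0, \rho)$ with limit $\nu$, so $y(z) < \nu$ for $z < \rho$. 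This simultaneously selects the negative square root and forces $A/B > 0$, so that $\gamma := \sqrt{\rho A / (2B)}$ is a positive real matching the one in the statement. A Newton-Puiseux argument at the polygon edge joining the monomials $A(z - \rho)$ and $\tfrac{1}{2} B(y - \nu)^2$ then upgrades the above to a convergent local representation
\begin{equation*}
   y(z) = G(z) + H(z)\,\sqrt{1 - z/\rho},
\end{equation*}
valid in a slit neighborhood of $\rho$, where $G$ and $H$ are analytic at $\rho$ with $G(\rho) = \nu$ and $H(\rho) = -2\gamma$.

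Finally I would invoke the Flajolet-Odlyzko transfer theorem (\cite{FN}, Section~4), which converts this singular expansion into
\begin{equation*}
   b_n = [z^n]\,y(z) \sim H(\rho) \cdot \left( -\frac{\rho^{-n}}{2\sqrt{\pi}\, n^{3/2}} \right) = \frac{\gamma\, \rho^{-n}}{\sqrt{\pi}\, n^{3/2}},
\end{equation*}
with relative error $O(1/n)$ coming from the next singular Puiseux term $(1 - z/\rho)^{3/2}$; the analytic part $G(z)$ contributes an exponentially smaller term since it extends analytically across $\rho$. The main technical obstacle, and the reason for hypothesis (iii), is the analytic continuation of $y$ into a Delta-domain at $\rho$: because $F$ is a polynomial, $y(z)$ is an algebraic function with only finitely many branch points, and (iii) guarantees that $\rho$ is the only such point on $|z| = \rho$; this is precisely the input required to deform the Cauchy contour $b_n = \frac{1}{2\pi i} \oint y(z)\, z^{-n-1}\, dz$ into a Hankel-type path skirting $\rho$, which is what makes the transfer theorem applicable.
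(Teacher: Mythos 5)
Your proposal is correct and reaches the same conclusion by the same overall logic: first establish $\partial_y F(\rho,\nu)=0$ via Pringsheim and the implicit function theorem, then extract a square-root singular expansion of $y$ at $\rho$, and finally apply Flajolet--Odlyzko singularity transfer (with hypothesis~(iii) supplying the $\Delta$-domain). The difference is technical and lies in how the square-root expansion is produced. The paper passes to the inverse function $z(y)$: since $\partial_z F(\rho,\nu)\neq0$ makes $z$ an honest analytic function of $y$ near $\nu$ with $z'(\nu)=0$, one computes $z''(\nu)=-\partial_y^2F/\partial_zF$, then writes the Taylor series of $z$ about $\nu$, factors out the double root, and inverts the resulting square root by an elementary power-series argument to recover the Puiseux series of $y$. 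You instead Taylor-expand $F(z,y)$ directly at $(\rho,\nu)$, read off the leading relation $\tfrac12 B(y-\nu)^2 \approx -A(z-\rho)$, and cite the Newton--Puiseux theorem to upgrade this to the full local representation $y=G(z)+H(z)\sqrt{1-z/\rho}$. Both give $H(\rho)=-2\gamma$, $G(\rho)=\nu$; your sign-positivity argument for $A/B>0$ (via $y<\nu$ on $(0,\rho)$) parallels the paper's concavity remark that $z''(\nu)\le0$. The paper's route has the merit of requiring nothing beyond termwise series manipulation, which is why the authors chose it for a self-contained outline; your route is shorter but leans on Newton--Puiseux as a black box. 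Either is a valid proof.

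One small point worth tightening: your sentence about selecting the negative square root should more explicitly tie $A/B>0$ to the reality of the local parametrization for $z\in(0,\rho)$, namely that $(1-z/\rho)>0$ and $(y-\nu)^2>0$ force $2\rho A/B>0$; you do state this but it is folded into a single clause and reads as an afterthought, while it is in fact where $\gamma>0$ comes from.
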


\begin{proof}
We will only outline the proof; further details and historical references are given in \cite{FN}.  See also Theorem~5 of \cite{Ben} for a similar result.

The idea is to expand the inverse function $z(y)$ at $(\rho, \nu)$.  Because $y(z)$ is a non-negative series with radius of convergence $\rho$, it must be singular at $z = \rho$, which implies that $\partial_y F(\rho, \nu) = 0$.

Because $\partial_z F(\rho, \nu) \not= 0$, the branch of the graph of $F(z, y) = 0$ passing through $(\rho, \nu)$ may be regarded as an analytic function $z(y)$.  Check that
\begin{equation*}
   z''(\nu) = -(\partial_y^2 F / \partial_z F)|_{(\rho, \nu)} = -\rho / 2 \gamma^2.
\end{equation*}
To see that $\gamma$ may be taken real and positive, note that $y(z)$ and all its derivatives are non-negative on $[0, \rho)$, so $z''(y)$ must be non-positive at $\nu$.

Conclude that $z(y)$ may be expressed as $\sum_{m=0}^\infty \beta_m (y - \nu)^m$, where
\begin{equation*}
   \beta_0 = \rho, \qquad \beta_1 = 0, \qquad
   \beta_2 = -\rho / 4 \gamma^2.
\end{equation*}
This can be written as
\begin{equation*}
   4 \gamma^2 (1 - z/\rho) = (y - \nu)^2
   \big(1 + \beta'_3 (y - \nu) + \beta'_4 (y - \nu)^2 + \cdots\big),
\end{equation*}
where $\beta'_3 = -4 \gamma^2 \beta_3 / \rho$, $\beta'_4 = -4 \gamma^2 \beta_4 / \rho$, and so on.

Take the square root.  Because $y < \nu$ for $z < \rho$, we obtain a series of the form
\begin{equation*}
   2 \gamma (1 - z/\rho)^{1/2} = (\nu - y)
   \big(1 + \beta''_3 (\nu - y) + \beta''_4 (\nu - y)^2 + \cdots\big),
\end{equation*}
where $\beta''_3, \beta''_4, \ldots$, are scalars beginning with $\beta''_3 = -\beta'_3/2 = 2 \gamma^2 \beta_3 / \rho$.

Invert this series algebraically to obtain a series
\begin{equation} \label{numy}
   \nu - y = c_1 (1 - z/\rho)^{1/2} + c_2 (1 - z/\rho) + c_3 (1 - z/\rho)^{3/2} + \cdots
\end{equation}
for some scalars $c_i$ beginning with $c_1 = 2 \gamma$.  The theorem now follows from
\begin{equation*}
   [z^n] (1 - z)^{1/2} = \frac{1}{2 \sqrt{\pi}\, n^{3/2}}
  \Big( 1 + O \Big(\frac{1}{n}\Big) \Big),
\end{equation*}
coupled with the classical method of \textit{asymptotic transfer} and the fact that by assumption, $\rho$ is the unique dominant singularity of $y(z)$ with respect to $z = 0$.
\end{proof}

\medbreak \noindent
\textit{Proof of Theorem~\ref{TotQasymp}.}
Define $y_P(z) := z P(z)$, so that \eqref{P(z) defn} becomes
\begin{equation} \label{y_P}
   y_P(z)
   = z + y_P^2 + z y_P^4 + z^2 y_P^6 + \cdots
   = z + \frac{y_P^2}{1 - z y_P^2} \,.
\end{equation}
Thus $F_P \big(z, y_P(z) \big) = 0$ for all $z$ in the domain of $y_P$, where
\begin{equation*}
   F_P(z, y) := z y^3 + (1 - z^2) y^2 - y + z.
\end{equation*}
Because $F_P(0, 0) = 0$ and $\partial_y F_P(0,0) \not= 0$, we may proceed to check the hypotheses (i)-(v) of Theorem~\ref{Darboux}.

Regarding \eqref{y_P} as a recursive functional equation, it is clear that the Maclaurin series of $y_P$ at $z = 0$ has integer coefficients exceeding the Catalan numbers (this also follows from Theorem~\ref{Q_n}).  Therefore the radius of convergence $\rho$ of $y_P$ is at most $1/4$, and so (i) and~(ii) are proven.

Because it has a positive series, $y_P$ must be singular at~$\rho$.  By~\eqref{y prime}, $\partial_y F_P$ is zero at singularities of $y_P$.  As a polynomial in~$y$, $F_P$ has lead coefficient~$z$.  Since $y_P$ is not singular at $z = 0$, it can only be singular at roots of the discriminant of $F_P$ regarded as a cubic in~$y$.  This discriminant is \eqref{Res_y F_P}, which has positive roots near $0.24$ and $1.97$, a negative root near $-0.21$, and two pairs of complex roots, of magnitudes near $0.94$ and $1.70$.  Since $\rho < 1/4$, it must be the least positive root, as given in~\eqref{rngg}.  Moreover, (iii) is proven.

At this point (iv) is also proven, as $\nu$ must be the simultaneous root of $F(\rho, y)$ and $\partial_y F_P(\rho, y)$.  To express it as a root of a rational polynomial, solve $F_P(z, y_P) = 0$ for $z$ to obtain the inverse $z_P(y)$ of $y_P(z)$.  The fact that $y_P(0) = 0$ determines which branch of the square root to take:
\begin{equation*}
   z_P(y) = \frac{(y^3 + 1) - \sqrt{(y^3 - 1)^2 + 4 y^4}}{2 y^2}.
\end{equation*}

Calculus shows that $z_P'$ can only be zero at roots of the polynomial \eqref{Res_z F_P}, which has three real roots, near $0.45$, $1.13$, and $2.37$.  Because $z_P = 0$ at $y = 0, 1$, the root near $0.45$ is its least positive (and in fact only) maximum.  Because $y_P$ increases monotonically on $[0, \rho)$, this root must be $\nu$, as given in~\eqref{rngg}.

For~(v), note that
\begin{equation*}
   \partial_z F_P(\rho, \nu) = \nu^3 - 2 \rho \nu^2 + 1, \qquad
   \partial_y^2 F_P(\rho, \nu) = 2(3 \rho \nu - \rho^2 + 1).
\end{equation*}
The statements of Theorem~\ref{TotQasymp} concerning $P(z)$ now follow from Theorem~\ref{Darboux}.

For the statements concerning $Q$, set $y_Q(z) := z Q(z)$ and $g(y) := y^2 / (1 - y^3)$.  Then \eqref{Q(z) defn} becomes $y_Q = z + g(y_P)$, so $y_Q$ and $Q$ have the same radius of convergence $\rho$ as $y_P$ and $P$, and their values at $\rho$ are as claimed.

For the asymptotic estimate of $Q_n$, note that asymptotically the coefficients of $y_Q$ and $g(y_P)$ are the same.  Combining
\begin{equation*}
   g(\nu) - g(y) = g'(\nu) (\nu - y) + O(\nu - y)^2
\end{equation*}
with \eqref{numy} leads to $\gamma_Q = g'(\nu) \gamma_P$, completing the proof.
\hfill $\Box$

\begin{rems}
\begin{itemize}

\item
The discriminant \eqref{Res_y F_P} of $F_P$ has a real root $\mu$ near $-0.21$, of magnitude less than~$\rho$.  We know that $y_P(z)$ is analytic at $\mu$, so it must be that $F_P(\mu, y) = 0$ has two distinct roots in~$y$, one double and one simple, and $y_P(\mu)$ is the simple one.

\smallbreak \item
The polynomial \eqref{Res_z F_P} is the resultant of $F_P$ and $\partial_y F_P$ regarded as quadratics in~$z$, divided by~$y$.  Not all of its real roots are extrema of $z_P$: the roots near $0.45$ and $2.37$ are, but the root near $1.13$ is the minimum of the conjugate of $z_P$, in which the other branch of the square root is taken.

\end{itemize}
\end{rems}

We conclude this section with some observations on the analytic behaviour of the UGF $D[\ell](z)$.  Consider the shift $y_\ell(z) := z D[\ell](z)$.  By~\eqref{DlUFE},
\begin{equation} \label{y_l}
   y_\ell(z)
   = z + y_\ell^2 + y_\ell^{\ell + 2} + y_\ell^{2\ell + 2} + \cdots
   = z + \frac{y_\ell^2}{1 - y_\ell^\ell} \,.
\end{equation}
Thus $F_\ell \big(z, y_\ell(z) \big) = 0$ for all $z$ in the domain of $y_\ell$, where
\begin{equation} \label{F_l}
   F_\ell(z, y) := y^{\ell + 1} - z y^\ell + y^2 - y + z.
\end{equation}

At $(0, 0)$, $F_\ell = 0$ and $\partial_y F_\ell \not= 0$, so Theorem~\ref{Darboux} will give asymptotic estimates for the coefficients of the Maclaurin series of $y_\ell$ if its hypotheses (i)-(v) hold.  However, for general~$\ell$ we have only been able to verify (i), (ii), (iv), and~(v).  Indeed, by either Corollary~\ref{alt periodic KC} or direct argument from \eqref{y_l}, the series of $y_\ell$ has positive coefficients exceeding the Catalan numbers, so it has radius of convergence $\le 1/4$.  This gives (i) and~(ii), and (iv) and~(v) then follow from~\eqref{F_l}.

In order to describe $y_\ell(z)$ we define the following positive algebraic numbers:

\begin{itemize}
\item
Let $\nu_\ell$ be the least positive root of the polynomial
\begin{equation*}
   R_\ell(y) := y^{2\ell} - (\ell-2) y^{\ell+1} -2y^\ell - 2y + 1.
\end{equation*}
\medbreak \item
Set $\rho_\ell := z_\ell(\nu_\ell)$, where $z_\ell(y)$ is the inverse of $y_\ell(z)$:
\begin{equation*}
   z_\ell(y) := y - \frac{y^2}{1 - y^\ell} =
   \frac{y (y^\ell + y - 1)}{y^\ell - 1} =
   y - y^2 - y^{\ell + 2} - y^{2\ell + 2} - \cdots.
\end{equation*}
\end{itemize}

\begin{prop}
\begin{enumerate}
\item[(i)]
$y_\ell(z)$ and $D[\ell](z)$ have radius of convergence $\rho_\ell$.
\smallbreak \item[(ii)]
At $z = \rho_\ell$, $y_\ell$ and $D[\ell]$ have infinite first derivative but finite value:
\begin{equation*}
   y_\ell(\rho_\ell) = \nu_\ell, \qquad
   D[\ell](\rho_\ell) = \frac{\nu_\ell}{\rho_\ell} =
   \frac{1 - \nu_\ell^\ell}{1 - \nu_\ell - \nu_\ell^\ell}.
\end{equation*}
\end{enumerate}
\end{prop}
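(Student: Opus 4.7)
The strategy is to realize $y_\ell$ as the analytic inverse of the elementary rational function $z_\ell(y) = y - y^2/(1 - y^\ell)$ and to continue this inverse along the positive real axis as far as $z_\ell$ remains monotonically increasing. A direct differentiation gives
\begin{equation*}
   z_\ell'(y) \;=\; \frac{(1 - y^\ell)^2 - 2y - (\ell - 2) y^{\ell + 1}}{(1 - y^\ell)^2},
\end{equation*}
whose numerator is exactly $R_\ell(y)$; hence $\nu_\ell$ is, by construction, the least positive critical point of $z_\ell$. Since $R_\ell(0) = 1 > 0$ and $R_\ell(1) = -\ell < 0$, the intermediate value theorem places $\nu_\ell$ in $(0, 1)$, so $z_\ell$ is complex analytic on a neighborhood of $[0, \nu_\ell]$.

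Because $z_\ell'(0) = 1 > 0$ and $\nu_\ell$ is the \emph{least} positive zero of $z_\ell'$, continuity forces $z_\ell'(y) > 0$ on $[0, \nu_\ell)$; thus $z_\ell$ restricts to a real-analytic order-preserving bijection $[0, \nu_\ell] \to [0, \rho_\ell]$. The complex inverse function theorem, applied at each $y \in [0, \nu_\ell)$, then produces an analytic inverse $w_\ell(z)$ on an open complex neighborhood of $[0, \rho_\ell)$. The formal power series $y_\ell(z)$ is characterized by the identity $z_\ell \big( y_\ell(z) \big) = z$ (which is \eqref{y_l} rewritten), and its Maclaurin coefficients are positive and bounded by those of $z D[1](z)$, whose closed form in Section~\ref{Per sec} has positive radius of convergence; hence $y_\ell$ converges on some disk $|z| < \rho$ with $\rho > 0$. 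Near the origin $y_\ell$ and $w_\ell$ both invert $z_\ell$ formally, so they coincide there, and by the identity theorem they agree throughout the connected component of their common domain of analyticity containing $0$. If $\rho > \rho_\ell$, then $y_\ell$ would be analytic at $\rho_\ell$, whereas $w_\ell'(z) = 1 / z_\ell' \big( w_\ell(z) \big) \to +\infty$ as $z \to \rho_\ell^-$, a contradiction. If $\rho < \rho_\ell$, then Pringsheim's theorem supplies a singularity of the positive-coefficient sum $y_\ell$ at $\rho$, while $w_\ell$ is analytic there, another contradiction. Hence $\rho = \rho_\ell$, which establishes (i) for $y_\ell$, and therefore for $D[\ell] = y_\ell / z$ since $\rho_\ell > 0$.

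For (ii), monotone convergence applied to the positive series gives $y_\ell(\rho_\ell) = \lim_{z \to \rho_\ell^-} w_\ell(z) = \nu_\ell$, while $y_\ell'(z) = 1 / z_\ell' \big( w_\ell(z) \big)$ diverges to $+\infty$ as $z \to \rho_\ell^-$ because $z_\ell'(\nu_\ell) = 0$. The relation $D[\ell](z) = y_\ell(z)/z$ and its derivative transfer both statements to $D[\ell]$, and the identity $\rho_\ell = z_\ell(\nu_\ell) = \nu_\ell (1 - \nu_\ell - \nu_\ell^\ell) / (1 - \nu_\ell^\ell)$ yields $D[\ell](\rho_\ell) = \nu_\ell / \rho_\ell = (1 - \nu_\ell^\ell) / (1 - \nu_\ell - \nu_\ell^\ell)$. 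The main delicacy is in the middle paragraph: carefully identifying the formal object $y_\ell$ with the analytic inverse $w_\ell$ on the correct real interval, and then excluding both $\rho < \rho_\ell$ and $\rho > \rho_\ell$ without invoking any uniqueness property of the dominant singularity, since the latter is precisely the ingredient that remains conjectural for general~$\ell$.
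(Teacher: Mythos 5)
Your proof is correct and follows essentially the same route as the paper's: regard $z_\ell(y) = y - y^2/(1-y^\ell)$ as the analytic inverse, compute $z_\ell'(y) = R_\ell(y)/(y^\ell-1)^2$ to locate the first critical point $\nu_\ell$, and combine the inverse-function-theorem continuation of $y_\ell$ along $[0,\rho_\ell)$ with Pringsheim's theorem (positivity of coefficients) to pin the radius of convergence exactly at $\rho_\ell$. The paper compresses all of this into the single sentence that ``$y_\ell$ has a positive series and is non-singular on $(0,\rho_\ell)$''; you have simply unpacked that claim, and your care in ruling out both $\rho<\rho_\ell$ and $\rho>\rho_\ell$ without appealing to uniqueness of the dominant singularity is exactly what a rigorous version of the paper's sketch requires.
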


\begin{proof}
The inverse $z_\ell(y)$ of $y_\ell(z)$ increases monotonically on the interval $(0, \nu_\ell)$ to a local maximum of value $\rho_\ell$, as
\begin{equation*}
   z_\ell'(y) = \frac{R_\ell(y)}{(y^\ell - 1)^2}
   = 1 - 2y - (\ell + 2) y^{\ell + 1} - (2\ell + 2) y^{2\ell + 1} - \cdots.
\end{equation*}
Hence the proposition follows from the fact that $y_\ell(z)$ has a positive series and is non-singular on the interval $(0, \rho_\ell)$.
\end{proof}

\begin{pb}
Does $y_\ell$ satisfy (iii) of Theorem~\ref{Darboux}?  In other words, is $z = \rho_\ell$ the unique singularity of $y_\ell$ of magnitude $\rho_\ell$?  If so, the theorem would give the asymptotic estimate
\begin{align} \label{D_l asymp}
   & D[\ell]_n = \frac{\gamma_\ell \rho_\ell^{-n}}{\sqrt{\pi}\, n^{3/2}}
   \Big( 1 + O \Big(\frac{1}{n}\Big) \Big),
   \mbox{\rm\ \ where\ \ } \\[6pt] \nonumber
   & \gamma_\ell := \frac{1 - \nu_\ell^\ell}{\sqrt{2 \rho_\ell}}\,
   \Big( 2 + 2\ell \nu_\ell^{\ell-1} +(\ell-2)(\ell+1) \nu_\ell^\ell - 2\ell \nu_\ell^{2\ell-1} \Big)^{-1/2}.
\end{align}

The $z$-coordinates of the singularities of $y_\ell(z)$ occur at roots of the discriminant of $F_\ell$, regarded as a polynomial in~$y$.  It is not hard to see that this discriminant is a polynomial of degree $2\ell$ in~$z$, but the polynomial itself is complicated.  We used software to confirm that for $\ell \le 16$, (iii)~does hold, and so \eqref{D_l asymp} is valid.  However, starting at $\ell = 14$ the discriminant has pairs of complex roots of magnitude less than $\rho_\ell$, increasingly many as $\ell$ grows, increasingly close to zero.  As a side note, software also suggests that the discriminant is always irreducible, which would imply that $\rho_\ell$ and $\nu_\ell$ are both of degree $2\ell$ over $\Q$.

We mention that the $y$-coordinates of the singularities of $y_\ell(z)$ occur at roots of the resultant of $F_\ell$ and $\partial_y F_\ell$, regarded as polynomials in~$z$, which is simply $R_\ell(y)$.  One can prove that $\nu_\ell$ is the unique root of $R_\ell$ of magnitude~$\le \nu_\ell$ indirectly, by observing that the Maclaurin series of $z_\ell'$ has this property.
\end{pb}

Finally, let us formalize the obvious statement that $D[\ell](z)$ goes to the Catalan generating function $C(z)$ as $\ell$ goes to~$\infty$.  Recall that the shifted Catalan function $y_C(z) := zC(z)$ has inverse $z_C(y) := y - y^2$, radius of convergence $1/4$, and limiting value $1/2$ as $z \to (1/4)^-$.  

\begin{prop} \label{D[infty]}
In the limit as $\ell \to \infty$,
\begin{enumerate}
\item[(i)]
$\rho_\ell$ increases monotonically from $\rho_1 = 3 - 2\sqrt{2}$ to $1/4$.
\smallbreak \item[(ii)]
$\nu_\ell$ increases monotonically from $\nu_1 = 1 - 1/\sqrt{2}$ to $1/2$.
\smallbreak \item[(iii)]
$D[\ell](z)$ decreases monotonically to $C(z)$ for all $z \in (0, 1/4)$.
\end{enumerate}
\end{prop}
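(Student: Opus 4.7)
The plan rests on a close analysis of the inverse function
\[
z_\ell(y) = y - \frac{y^2}{1 - y^\ell} = z_C(y) - g_\ell(y),
\qquad
g_\ell(y) := \frac{y^{\ell+2}}{1 - y^\ell} = \sum_{k \ge 1} y^{\ell k + 2},
\]
exploiting the key observation that each monomial $y^{\ell k + 2}$ strictly decreases in $\ell$ for $y \in (0, 1)$. Hence $g_\ell(y)$ and $g_\ell'(y) = \sum_{k \ge 1} (\ell k + 2) y^{\ell k + 1}$ are positive functions on $(0, 1)$ that strictly decrease as $\ell$ grows and tend to $0$ pointwise as $\ell \to \infty$. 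From this single fact each of (i), (ii), and (iii) will follow.

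For (i), pointwise monotonicity of $z_\ell$ in $\ell$ gives
\[
\rho_{\ell+1} \ge z_{\ell+1}(\nu_\ell) > z_\ell(\nu_\ell) = \rho_\ell,
\]
while $z_\ell(y) < z_C(y) \le 1/4$ supplies the upper bound $\rho_\ell < 1/4$. Fixing $y_0 \in (0, 1/2)$ close to $1/2$ yields $\liminf_{\ell \to \infty} \rho_\ell \ge z_C(y_0)$, and letting $y_0 \to 1/2$ completes the limit $\rho_\ell \to 1/4$. The initial value $\rho_1 = 3 - 2\sqrt{2}$ is a direct computation from $z_1(y) = (y - 2y^2)/(1 - y)$.

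The heart of the argument, and the main obstacle, is the monotonicity $\nu_{\ell+1} > \nu_\ell$ in (ii). I plan to prove it by establishing the stronger pointwise inequality $z_{\ell+1}'(y) > z_\ell'(y)$ on $(0, 1/2)$, equivalently $g_{\ell+1}'(y) < g_\ell'(y)$ there. Comparing the defining series term by term, the ratio of the $k$-th summands equals $\frac{(\ell+1)k + 2}{\ell k + 2}\, y^k$, which is bounded above by $\frac{\ell+1}{\ell}\, y^k \le 2 y^k < 1$ whenever $y < 1/2$ and $k \ge 1$. Combined with the bound $\nu_\ell < 1/2$ (which follows from $z_\ell'(1/2) = -g_\ell'(1/2) < 0$), this gives $z_{\ell+1}'(y) > z_\ell'(y) \ge 0$ for all $y \in (0, \nu_\ell]$, so $z_{\ell+1}'$ has no zeros there and hence $\nu_{\ell+1} > \nu_\ell$. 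Monotone boundedness then forces $\nu_\ell \to L$ for some $L \le 1/2$; passing to the limit in the defining identity $(1 - \nu_\ell^\ell)^2 = 2\nu_\ell + (\ell-2)\nu_\ell^{\ell+1}$ (both sides controlled using $\nu_\ell \le 1/2$) yields $1 = 2L$, so $L = 1/2$. The initial value $\nu_1 = 1 - 1/\sqrt{2}$ is the smaller positive root of $R_1(y) = 2y^2 - 4y + 1$.

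Finally (iii) will follow from the coefficient chain $D[\ell]_n \ge D[\ell+1]_n \ge C_n$ recorded in \eqref{coefficient comparison}, combined with the stabilization $D[\ell]_n = C_n$ for $\ell \ge n$: any non-triangular cell in an $\ell$-periodic dissection has at least $3 + \ell$ vertices and so cannot fit inside an $(n+2)$-gon once $\ell \ge n$. For each $z \in (0, 1/4)$, part~(i) supplies an $\ell_0$ with $z < \rho_{\ell_0}$, so $D[\ell](z)$ is defined and decreases monotonically in $\ell$ for $\ell \ge \ell_0$; Tannery's theorem, applied with the dominating series $\sum_n D[\ell_0]_n z^n$, then delivers the pointwise limit $D[\ell](z) \to C(z)$.
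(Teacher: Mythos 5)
Your argument is correct and follows essentially the route the paper indicates, namely analyzing the explicit series for $z_\ell$ and $z_\ell'$ termwise (for parts (i) and (ii)) and appealing to the coefficient chain \eqref{coefficient comparison} together with the stabilization $D[\ell]_n = C_n$ for $\ell \ge n$ (for part (iii)). A small structural improvement over the paper's sketch: the paper derives (i) from (ii), whereas you get (i) independently by noting that $\rho_{\ell+1}$ is the global maximum of $z_{\ell+1}$ on $(0,1)$, so $\rho_{\ell+1} \ge z_{\ell+1}(\nu_\ell) > z_\ell(\nu_\ell) = \rho_\ell$ without needing to compare $\nu_\ell$ and $\nu_{\ell+1}$ first.
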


\begin{proof}
The series for $z_\ell'$ gives~(ii), and then the series for $z_\ell$ gives~(i).  For~(iii), recall \eqref{coefficient comparison} and note that $D[\ell]_n = C_n$ for $\ell \ge n$.
\end{proof}

\begin{rem}
It is an amusing exercise to graph $z_\ell$ on $\R$.  Consider
\begin{equation*}
   z_\ell''(y) = \frac{(\ell - 1) (\ell - 2) y^{2\ell} + (\ell - 1) (\ell + 4) y^\ell + 2}
   {(y^\ell - 1)^3}.   
\end{equation*}
For $\ell \ge 3$, regard the numerator as a quadratic in $y^\ell$.  As such, its discriminant is $\ell^2 (\ell - 1) (\ell + 7)$ and both roots are negative.  It follows that for $\ell = 1$ or $\ell$ even, $z_\ell$ has no real inflection points, while for $\ell \ge 3$ and odd, it has two, both at negative $y$-values, which approach $-1$ from opposite sides as $\ell \to \infty$.

In the former case, $z_\ell$ has exactly two real real extrema: a maximum at $y = \nu_\ell$ and a minimum at some $y > 1$.  In the latter case it has these two extrema and possibly two more, a minimum and a maximum, both at negative $y$-values.  Observing that for $\ell$ odd, $z_\ell'(-1) = 8 - \ell$, and plotting the cases $\ell = 3, 5, 7$ with a computer, we find that these two additional extrema obtain for $\ell \ge 9$ and odd, and that they approach $y = -1$ from opposite sides as $\ell \to \infty$.  This is explained by the fact that for $\ell$ large, $z_\ell$ approximates $y$ for $|y| > 1$ and $y - y^2$ for $|y| < 1$.
\end{rem}

\section{Operations on dissections} \label{MaxOpenSec}

At this point we have proven all stated results except for Theorem~\ref{DiffQThm}, the formula for the 3-periodic quiddity BGF $Q(z, w)$ in terms of $P(z, w)$.  We now take the first step in the proof of this theorem: we construct a canonical representative of each equivalence class of 3-periodic dissections with the same quiddity.

Before we begin, we wish to emphasize two points concerning quiddity generating functions.  To put our results in context it would be natural to ask about the generating functions of the quiddities of the $\ell$-periodic dissections for arbitrary $\ell$.  Denote these functions by $Q[\ell]$, so that $Q = Q[3]$.

\begin{itemize}
\item
The UGF $Q[\ell](z)$ and the BGF $Q[\ell](z, w)$ are well-defined, but the MGF $Q[\ell](\vecw)$ is not.  This is because by Lemma~\ref{T(m)}, the quiddity determines the numbers of vertices and of cells in the dissection, but it does not determine the number of $(r+2)$-cells for each~$r$.

\smallbreak \item
Our construction of a canonical dissection associated to each quiddity works only in the case $\ell = 3$.  As far as we can see, it does not adapt to give a canonical $\ell$-periodic dissection associated to each $\ell$-periodic quiddity in general.  Consequently, we do not know anything about $Q[\ell](z, w)$ for $\ell \not= 3$.  To our knowledge, this question is new; hence the problem we formulated in Section~\ref{RGP}.
\end{itemize}

\subsection{Surgery} \label{QPS}
Here we define an operation on dissections which preserves the quiddity.  Consider a dissection of a convex $(n+2)$-gon.  Let us begin by collecting terminology from earlier sections:

\begin{itemize}
\item
The \textit{vertices} are labelled $0, 1, 2, \ldots, n+1$, in cyclic order.
\smallbreak \item
The \textit{edges} are the segments $(i, i+1)$ bounding the $(n+2)$-gon, $i \in \Z_{n+2}$.
\smallbreak \item
The \textit{base edge} is $(n+1, 0)$.
\smallbreak \item
The \textit{chords} are the non-crossing diagonals $(i, j)$ that make up the dissection.  Note that here $i$ and~$j$ are not cyclically adjacent.
\smallbreak \item
The \textit{cells} are the sub-polygons into which the $(n + 2)$-gon is dissected.
\smallbreak \item
The \textit{base cell} is the cell containing the base edge.
\smallbreak \item
The \textit{sides} of a cell are the edges and chords bounding it.
\end{itemize}

\medbreak
The next three definitions establish the concept of \textit{level,} which is a measure of a type of distance from any given cell to the base cell.

\begin{defn}
Consider a non-base cell~$\cC$.  Its \textit{base side} is the unique side with the following property: it is a chord, and of the two pieces into which it divides the $(n+2)$-gon, one contains the base cell and the other contains~$\cC$.
\end{defn}

\begin{defn}
Consider a cell.
\begin{itemize}
\item
Its \textit{parent} is the unique cell with which it shares its base side.
\smallbreak \item
Its \textit{children} are the cells of which it is the parent.
\smallbreak \item
Define its \textit{ancestors} and \textit{descendants} accordingly.
\end{itemize}
\end{defn}

In order to visualize the descendants of a cell, say $\cC$, divide the $(n+2)$-gon into two pieces along the base side of $\cC$.  The descendants of $\cC$ are precisely all cells in the same piece as $\cC$.

\begin{defn}
The \textit{level} of a cell is the number of ancestors it has.
\end{defn}

Some observations are in order.  The base cell has no parent and so is of level~0.  A cell of level~$L$ has exactly one ancestor of each level $0, 1, \ldots, L - 1$.  Its parent is of level $L-1$ and its children are of level $L+1$.  

The level and ancestors of a cell may be understood as follows. Consider a path from the cell to the base cell which stays in the interior of the $(n+2)$-gon and crosses the minimum number of chords.  The cell's level is the number of chords the path crosses, and its ancestors are the cells the path enters.

\begin{figure}[htb]
\footnotesize
$$
\xymatrix @!0 @R=0.40cm @C=0.49cm
{
&&&
\mbox{\rm $\cC_2$: level~2} \ar@{-->}[rdd]
&&&&&
\\
&&&&&&
\bullet \ar@{-}[llld]
\ar@{-}[lllllddd]
\ar@<0.2mm>@{-}[lllllddd]
\ar@<-0.2mm>@{-}[lllllddd]
\ar@{-}[rrr]
&&&
\bullet\ar@{-}[rrrd]
\ar@{-}[llllllllddddddddd]
\ar@<0.2mm>@{-}[llllllllddddddddd]
\ar@<-0.2mm>@{-}[llllllllddddddddd]
\\
&&&
\circ \ar@{-}[lldd]
&&&&&&&&&
\circ \ar@{-}[rrdd]
\\
&&&&& \!\!\! \mbox{\rm base of $\cC_2$}
\\
& \bullet \ar@{-}[ldd]
&&&&&&&&&&&&&
\bullet \ar@{-}[rdd]
\ar@{-}[llllluuu]
\ar@{-}[llllluuu]
\ar@{-}[rdddd]
\ar@{-}[rdddd]
\\
&&&&&&& \mbox{\rm\ \ base of $\cC_1$}
\\
\circ \ar@{-}[dd]
&& \mbox{\rm\ \ $\cC_1$: level~1}
&&&&&&&&&&&&&
\circ \ar@{-}[dd]
\\
\\
\circ \ar@{-}[rdd]
&&&&&&&&
&&&&&&&
\bullet \ar@{-}[ldd]
\ar@{-}[llllllddddd]
\ar@{-}[llllllddddd]
\\
&&&&&&&&
\mbox{\rm{$\cC_0$: level~0 (base cell)}}
\\
& \bullet \ar@{-}[rrdd]
\ar@{-}[rrrrrddd]
\ar@{-}[rrrrrddd]
&&&&&&&&&&&&&
\bullet \ar@{-}[lldd]
\\
\\
&&&
\circ \ar@{-}[rrrd]
&&&&&&&&&
\circ \ar@{-}[llld]
\\
&&&&&&
\bullet \ar@{-}[rrr]
\ar@<0.2mm>@{-}[rrr]
\ar@<-0.2mm>@{-}[rrr]
\ar@<0.4mm>@{-}[rrr]
\ar@<-0.4mm>@{-}[rrr]_{\mbox{\rm base edge}}
&&&
\bullet \ar@{-}[rrrrruuu]
}
$$
\caption*{A level~2 cell $\cC_2$ and its two ancestors: its parent $\cC_1$ and its grandparent $\cC_0$, the base cell (hollow dots represent sub-dissections).}
\normalsize
\end{figure}

The next two definitions introduce surgery.  Consider a dissection of an $(n+2)$-gon containing an $(r+2)$-cell with vertices $v_0, \ldots, v_{r+1}$, where
\begin{equation} \label{cell vertices}
   0 \le v_0 < v_1 < \cdots < v_r < v_{r+1} \le n + 1.
\end{equation}
The sides of the cell are the segments $(v_s, v_{s+1})$.  At $s = r+1$ we take this to mean $(v_{r+1}, v_0)$.  It is important to note that this is the base side.

\begin{defn}
Two sides of a cell are \textit{distant} if ``the cell has vertices properly between them''.  Thus sides $(v_s, v_{s+1})$ and $(v_{s'}, v_{s'+1})$ with $s < s'$ are distant if 
\begin{equation*}
   s' \ge s+3, \qquad s \ge (s'+3) - (r+2).
\end{equation*}
\end{defn}

\begin{defn}
\textit{Surgery} may be performed on any two distant sides of a cell which are both chords of the dissection; neither is an edge.  It ``replaces them by the other two sides of the quadrilateral formed by their vertices''.  Thus if $(v_s, v_{s+1})$ and $(v_{s'}, v_{s'+1})$ are distant sides and also chords, surgery removes them from the dissection and replaces them by the chords $(v_{s+1}, v_{s'})$ and $(v_{s'+1}, v_s)$.
\end{defn}

\begin{figure}[htbp]
\footnotesize
$$
\xymatrix @!0 @R=0.35cm @C=0.6cm
{
&\circ \ar@{-}[ldd] \ar@{-}[rr] && v_s \ar@{-}[rdd]
\\
\\
v_{s+1} \ar@{-}[dd] \ar@{-}[rrruu] &&&& \circ \ar@{-}[dd]
\\
\\
\circ \ar@{-}[rdd] &&&& v_{s'+1} \ar@{-}[ldd] \ar@{-}[llldd]
\\
\\
& v_{s'} \ar@{-}[rr] && \circ
}
\mkern54mu
\xymatrix @!0 @R=0.35cm @C=0.6cm
{
\\
\\
\\
\longmapsto
}
\mkern54mu
\xymatrix @!0 @R=0.35cm @C=0.6cm
{
& \circ \ar@{-}[ldd] \ar@{-}[rr] && v_s \ar@{-}[rdd]
\\
\\
v_{s+1} \ar@{-}[dd] \ar@{-}[rdddd] &&&& \circ \ar@{-}[dd]
\\
\\
\circ \ar@{-}[rdd] &&&& v_{s'+1} \ar@{-}[ldd] \ar@{-}[luuuu]
\\
\\
&v_{s'} \ar@{-}[rr] && \circ
}
$$
\caption*{Surgery on a cell (again, hollow dots represent sub-dissections)}
\normalsize
\end{figure}
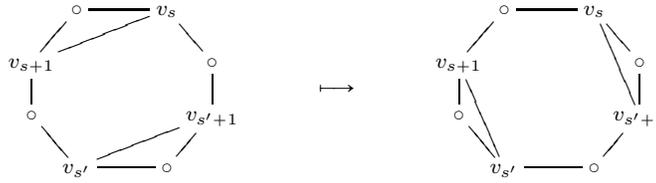

The result of a surgery is a new dissection, as the modified set of chords is still non-crossing.  Surgery is reversible: the two new chords it creates are themselves distant sides of a newly created cell, and surgery on them inverts the original surgery.  Surgery alters exactly three cells in the dissection: the cell whose distant sides are replaced, and the two cells with which it shared those sides.  The main point is the following lemma, which is obvious.

\begin{lem} \label{ObvLem}
Surgery does not change the quiddity of the dissection.
\end{lem}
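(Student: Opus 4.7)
The plan is to reduce the lemma to a chord-counting argument via the elementary identity
\[
a_i \;=\; c_i + 1,
\]
where $c_i$ denotes the number of chords of the dissection incident to the vertex $i$ of the $(n+2)$-gon. This identity holds because the cells at $i$ are arranged as an angular fan bounded on its two extreme sides by the polygon edges $(i-1,i)$ and $(i,i+1)$, with consecutive cells separated by the chords at~$i$. Hence to prove the lemma it suffices to show that surgery preserves $c_i$ at every vertex~$i$.

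To see this, I would simply inspect the endpoints of the chords that are deleted and created. Surgery removes the chords $(v_s,v_{s+1})$ and $(v_{s'},v_{s'+1})$, whose combined multiset of endpoints is $\{v_s,v_{s+1},v_{s'},v_{s'+1}\}$; and it inserts the chords $(v_{s+1},v_{s'})$ and $(v_{s'+1},v_s)$, whose combined multiset of endpoints is exactly the same four-element set. The distance condition $s' \ge s+3$ (together with $s \ge (s'+3)-(r+2)$) guarantees that these four indices are genuinely distinct, so the multisets really do match as sets. Consequently, at each of the four vertices $v_s,v_{s+1},v_{s'},v_{s'+1}$, exactly one incident chord is deleted and exactly one incident chord is added, leaving $c_i$ unchanged; at every other vertex no chord is added or removed, so $c_i$ is trivially unchanged.

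The argument is entirely mechanical, and I do not anticipate any genuine obstacle. The only minor point requiring care is the verification that the four labelled vertices are distinct, which is immediate from the distance condition defining which pairs of sides admit surgery.
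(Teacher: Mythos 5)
Your proof is correct. The paper itself offers no argument for this lemma---it simply declares it obvious and moves on---so there is no ``paper proof'' to compare against. Your chord-counting argument is a clean way to make the obviousness explicit, and it rests on exactly the identity $a_i = c_i + 1$ that the authors themselves invoke earlier in the proof of Lemma~\ref{T(m)} (``the number of chords contacting the $i^{\thup}$ vertex is $a_i - 1$''). The observation that the deleted chords $(v_s, v_{s+1})$, $(v_{s'}, v_{s'+1})$ and the inserted chords $(v_{s+1}, v_{s'})$, $(v_{s'+1}, v_s)$ have identical endpoint multisets is exactly the right reduction, and your check that the distance condition forces the four indices to be distinct closes the only gap worth worrying about. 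This is the argument the authors evidently had in mind.
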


This suggests a question: is the converse true?  If two dissections have the same quiddity, can one be transformed into the other by a sequence of surgeries?  In general, we do not know.  The main result of this section is that in the 3-periodic case, the answer is yes.

\begin{defn}
Consider two distant sides of a cell which are both chords.  If one of them is the base side, surgery on them is \textit{opening.}  Otherwise it is \textit{closing.}
\end{defn}

Opening surgery is never possible on the base cell, as its base side is an edge.  Opening and closing surgeries are mutually inversive.

Let us describe the effect of an opening surgery.  Suppose that $\cC_L$ is a non-base cell of level~$L$ with vertices~\eqref{cell vertices}.  Write $\cC_{L-1}$ for its parent cell, with which it shares its base side $(v_{r+1}, v_0)$.  Assume that $\cC_L$ has a non-base side $(v_s, v_{s+1})$ which is distant from the base side, i.e., $2 \le s \le r-2$, and is a chord.  Write $\cC_{L+1}$ for the child cell of $\cC_L$ with which it shares $(v_s, v_{s+1})$.

In this setting $(v_s, v_{s+1})$ and $(v_{r+1}, v_0)$ are eligible for opening surgery, which merges the parent $\cC_{L-1}$ and the child $\cC_{L+1}$ into a single cell $\cC'_{L-1}$ of level $L-1$, and divides the original cell $\cC_L$ into two cells $\cC'_L$ and $\cC''_L$, both of level $L$.  The two new level $L$ cells are both children of the new level $L-1$ cell, and their base sides are the two new chords created by the surgery.

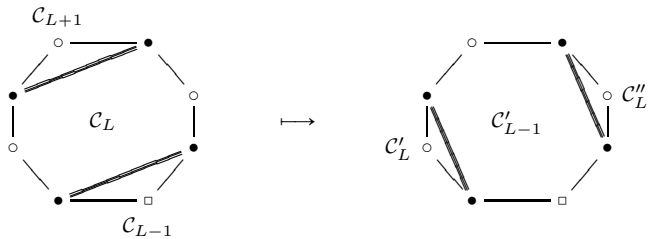
\begin{figure}[htbp]
\footnotesize
$$
\xymatrix @!0 @R=0.35cm @C=0.6cm
{
& \cC_{L+1} &&& \\
& \circ \ar@{-}[ldd] \ar@{-}[rr] && \bullet \ar@{-}[rdd]\\
\\
\bullet \ar@{-}[dd]
\ar@{-}[rrruu] \ar@<0.2mm>@{-}[rrruu] \ar@<-0.2mm>@{-}[rrruu]
&&&& \circ \ar@{-}[dd]\\
&& \cC_L &&
\\
\circ \ar@{-}[rdd] &&&& \bullet \ar@{-}[ldd]
\ar@{-}[llldd] \ar@<0.2mm>@{-}[llldd] \ar@<-0.2mm>@{-}[llldd]\\
\\
& \bullet \ar@{-}[rr] && \square \\
&&& \cC_{L-1} &
}
\mkern54mu
\xymatrix @!0 @R=0.35cm @C=0.6cm
{
\\
\\
\\
\\
\longmapsto
}
\mkern18mu
\xymatrix @!0 @R=0.35cm @C=0.6cm
{
\\
&& \circ \ar@{-}[ldd] \ar@{-}[rr] && \bullet \ar@{-}[rdd]
\\
\\
& \bullet \ar@{-}[dd]
\ar@{-}[rdddd] \ar@<0.2mm>@{-}[rdddd] \ar@<-0.2mm>@{-}[rdddd] 
&&&& \circ \ar@{-}[dd] &
\mkern-27mu \cC''_L
\\
&&& \cC'_{L-1} &&
\\
\mkern27mu \cC'_L
& \circ \ar@{-}[rdd] &&&& \bullet \ar@{-}[ldd]
\ar@{-}[luuuu] \ar@<0.2mm>@{-}[luuuu] \ar@<-0.2mm>@{-}[luuuu]
\\
\\
&& \bullet \ar@{-}[rr] && \square
\\
}
$$
\caption{Opening surgery on a cell $\cC_L$: the base sides of $\cC_L$, $\cC_{L+1}$, $\cC'_L$, and $\cC''_L$ are emphasized, hollow dots represent sub-dissections, and the base is contained in the square.}
\label{OS}
\normalsize
\end{figure}

All other cells in the dissection remain unchanged.  The levels of the descendants of $\cC_{L+1}$ all decrease by~$2$, and no other levels change.  Heuristically, we think of opening surgeries as ``bringing cells closer to the base cell'', by ``opening ancestor cells outward towards descendant cells''; hence the terminology.

\subsection{3-periodic surgery} \label{3dS}
Surgery does not preserve 3-periodic dissections.  For example, there is a 3-periodic dissection of the $11$-gon into a base nonagon and two level~1 triangles which after surgery has a base pentagon, a level~1 hexagon, and a level~2 quadrilateral: take the original chords to be $(2, 4)$ and $(7, 9)$ in $(0, 1, \ldots, 10)$.  However, there is a natural type of surgery which does preserve 3-periodic dissections.

\begin{defn}
Given a 3-periodic dissection, we assign an element of $\Z_3$ to each edge, chord, and cell, its \textit{$\Z_3$-index}.  The procedure is recursive on level.  To begin, assign the index~0 to the base edge of the base cell.  Once the sides of all cells of level~$< L$ are indexed, the base sides of all cells of level~$L$ will have been indexed.  To index their remaining sides, increase the indices in increments of~1 going counterclockwise around each cell.  The index of a cell is the index of its base side.
\end{defn}

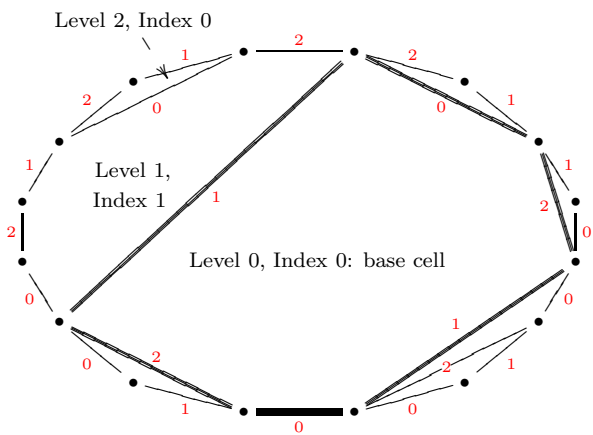
\begin{figure}[htb]
\footnotesize
$$
\xymatrix @!0 @R=0.40cm @C=0.49cm
{
&&&
\mbox{\rm{Level~2, Index~0}} \ar@{-->}[rdd]
&&&&&
\\
&&&&&&
\bullet \ar@{-}[llld]_{\textcolor{red}{1}}
\ar@{-}[lllllddd]^{\textcolor{red}{0}}
\ar@{-}[rrr]^{\textcolor{red}{2}}
&&&
\bullet\ar@{-}[rrrd]^{\textcolor{red}{2}}
\ar@{-}[llllllllddddddddd]
\ar@<0.2mm>@{-}[llllllllddddddddd]
\ar@<-0.2mm>@{-}[llllllllddddddddd]^{\textcolor{red}{1}}
\\
&&&
\bullet \ar@{-}[lldd]_{\textcolor{red}{2}}
&&&&&&&&&
\bullet \ar@{-}[rrdd]^{\textcolor{red}{1}}
\\
\\
& \bullet \ar@{-}[ldd]_{\textcolor{red}{1}}
&&&&&&&&&&&&&
\bullet \ar@{-}[rdd]^{\textcolor{red}{1}}
\ar@{-}[llllluuu]
\ar@<0.2mm>@{-}[llllluuu]
\ar@<-0.2mm>@{-}[llllluuu]^{\textcolor{red}{0}}
\ar@{-}[rdddd]
\ar@<0.2mm>@{-}[rdddd]
\ar@<-0.2mm>@{-}[rdddd]_{\textcolor{red}{2}}
\\
&&&
\mbox{\rm{Level~1,}}
\\
\bullet \ar@{-}[dd]_{\textcolor{red}{2}}
&&&
\mbox{\rm{Index~1\ }}
&&&&&&&&&&&&
\bullet \ar@{-}[dd]^{\textcolor{red}{0}}
\\
\\
\bullet \ar@{-}[rdd]_{\textcolor{red}{0}}
&&&&&&&&
\mbox{\rm{Level~0, Index~0: base cell}}
&&&&&&&
\bullet \ar@{-}[ldd]^{\textcolor{red}{0}}
\ar@{-}[llllllddddd]
\ar@<0.2mm>@{-}[llllllddddd]
\ar@<-0.2mm>@{-}[llllllddddd]_{\textcolor{red}{1}}
\\
\\
& \bullet \ar@{-}[rrdd]_{\textcolor{red}{0}}
\ar@{-}[rrrrrddd]
\ar@<0.2mm>@{-}[rrrrrddd]
\ar@<-0.2mm>@{-}[rrrrrddd]^{\textcolor{red}{2}}
&&&&&&&&&&&&&
\bullet \ar@{-}[lldd]^{\textcolor{red}{1}}
\\
\\
&&&
\bullet \ar@{-}[rrrd]_{\textcolor{red}{1}}
&&&&&&&&&
\bullet\ar@{-}[llld]^{\textcolor{red}{0}}
\\
&&&&&&
\bullet \ar@{-}[rrr]
\ar@<0.2mm>@{-}[rrr]
\ar@<-0.2mm>@{-}[rrr]
\ar@<0.4mm>@{-}[rrr]
\ar@<-0.4mm>@{-}[rrr]_{\textcolor{red}{0}}
&&&
\bullet \ar@{-}[rrrrruuu]|-{\textcolor{red}{2}}
}
$$
\caption{The $\Z_3$-indices of a 3-periodic dissection of a 16-gon}
\label{MO not BO}
\normalsize
\end{figure}

\begin{lem} \label{3d surgery}
Consider two sides of a cell in a 3-periodic dissection.
\begin{enumerate}
\item[(i)]
If the sides have the same $\Z_3$-index, then they are distant.
\smallbreak \item[(ii)]
If the sides are distant and are chords, then the dissection produced by surgery on them is 3-periodic if and only if they have the same $\Z_3$-index.
\end{enumerate}
\end{lem}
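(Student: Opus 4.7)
The plan is to reduce both statements to arithmetic modulo $3$, applied to cyclic positions within the cell and to vertex counts of the cells created by surgery.

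For (i), since the cell is 3-periodic it has $r+2 = 3d$ sides, and starting from the base side with index $b$ and going counterclockwise the $\Z_3$-indices cycle through $b, b+1, b+2, b, b+1, b+2, \ldots$. Two sides thus share an index iff their cyclic positions differ by a multiple of $3$. For distinct sides at positions $s < s'$, this says $s' - s$ is a positive multiple of $3$; since $r + 2$ is also a multiple of $3$, the complementary cyclic arc $(r+2) - (s' - s)$ is likewise a positive multiple of $3$. Both arcs are therefore $\geq 3$, which is precisely the distance condition $s' - s \geq 3$ and $s' - s \leq r-1$.

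For (ii), I first plan to describe the cell structure produced by surgery on distant chord sides $(v_s, v_{s+1})$ and $(v_{s'}, v_{s'+1})$ of $\cC$. The three affected old cells are $\cC$ itself and its neighbors $\cD_1, \cD_2$ across the two chords. They are replaced by three new cells: two pieces of $\cC$, namely $\cC'$ with vertices $\{v_{s+1}, \ldots, v_{s'}\}$ of size $s' - s$ and $\cC''$ with vertices $\{v_{s'+1}, \ldots, v_{r+1}, v_0, \ldots, v_s\}$ of size $(r+2) - (s'-s)$, plus a merged cell $\cD'$ formed from $\cD_1$, $\cD_2$, and the internal quadrilateral $(v_s, v_{s+1}, v_{s'}, v_{s'+1})$ of $\cC$ sandwiched between them. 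All other cells are unchanged, so 3-periodicity of the new dissection is equivalent to $|\cC'|, |\cC''|, |\cD'|$ all being multiples of $3$.

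The key arithmetic observation is that $\cD_1$ and $\cD_2$ are vertex-disjoint. This holds because $\cD_1$'s vertex set lies in the pocket range $\{v_s, v_s + 1, \ldots, v_{s+1}\}$ (polygon numbering), $\cD_2$'s lies in $\{v_{s'}, \ldots, v_{s'+1}\}$, and the distance condition $s' \geq s + 3$ forces $v_{s'} > v_{s+1}$, making the pockets disjoint. A boundary traversal of $\cD'$ then yields $|\cD'| = |\cD_1| + |\cD_2|$, which is automatically a multiple of $3$ by the 3-periodicity of the old dissection. The remaining condition $|\cC'|, |\cC''| \equiv 0 \pmod 3$ reduces, via $|\cC'| + |\cC''| = r + 2 \equiv 0 \pmod 3$, to the single requirement $s' - s \equiv 0 \pmod 3$, which by (i) is equivalent to the two sides having the same $\Z_3$-index. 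The main subtlety I expect is the vertex-disjointness of $\cD_1$ and $\cD_2$, which rests on the pocket argument; once this is granted, the rest is routine modular arithmetic.
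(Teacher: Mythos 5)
The paper declares Lemma~\ref{3d surgery} (together with the following lemma) to have a proof that is ``trivial and left to the reader,'' so there is no written argument in the paper to compare against. Your write-up correctly supplies the omitted argument and takes exactly the route the authors intend: part~(i) is pure arithmetic on cyclic positions mod~$3$, using that $r+2\equiv 0\pmod 3$; part~(ii) reduces to the vertex counts of the three cells created by surgery. Your identification of the one nonobvious point — that the two neighbor cells $\cD_1,\cD_2$ across the surgered chords have disjoint vertex sets because their ``pockets'' $[v_s,v_{s+1}]$ and $[v_{s'},v_{s'+1}]$ are separated by the distance hypothesis $s'-s\ge 3$, so that $|\cD'|=|\cD_1|+|\cD_2|\equiv 0$ automatically — is correct and is indeed the crux; once granted, the condition collapses to $|\cC'|=s'-s\equiv 0\pmod 3$, which by~(i) is the same-index condition. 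The argument is complete and sound.
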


\begin{defn}
\textit{3-periodic surgery} on a 3-periodic dissection is surgery on two sides of a cell which are chords of the same $\Z_3$-index.
\end{defn}

\begin{lem}
3-periodic surgery does not alter the $\Z_3$-indices:
\begin{enumerate}
\item[(i)]
The two new sides have the same $\Z_3$-index as the sides they replace.
\smallbreak \item[(ii)]
The $\Z_3$-indices of all other sides remain the same.
\end{enumerate}
\end{lem}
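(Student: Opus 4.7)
The plan is to reduce the claim to a careful count of positions around each new cell. First, note the arithmetic behind the indexing: in any 3-periodic cell (with $3k$ sides), the side at counterclockwise position $i$ from the base (position~$0$) receives index $j + i \pmod 3$, where $j$ is the base index. Two sides of one cell share an index iff their positions differ by a multiple of~$3$; in particular, a 3-periodic surgery on cell $\cC_L$ always involves two sides whose cyclic position distance is divisible by~$3$.

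I will treat opening surgery first; the closing case follows by reversibility. Write $\cC_L$ with vertices $v_0,\ldots,v_{r+1}$ and base $(v_{r+1},v_0)$ of index $j$, and let the second chord involved be $(v_s,v_{s+1})$, which sits at position $s+1$ in $\cC_L$, so 3-periodicity forces $s+1 \equiv 0 \pmod 3$. Let $\cC_{L-1}$ be the parent and $\cC_{L+1}$ the child of $\cC_L$ across $(v_s,v_{s+1})$; then $\cC_{L+1}$'s base has index $j$ as well. After the surgery, $\cC_{L-1}$, $\cC_{L+1}$ and a middle strip of $\cC_L$ merge into a single level-$(L-1)$ cell $\cC'_{L-1}$ with the same base as $\cC_{L-1}$, while the two new chords $(v_0,v_s)$ and $(v_{s+1},v_{r+1})$ become the base sides of two new level-$L$ cells $\cC'_L$ (vertices $v_0,\ldots,v_s$) and $\cC''_L$ (vertices $v_{s+1},\ldots,v_{r+1}$).

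The crux is to walk the boundary of each new cell counterclockwise from its base and check that every surviving side's new position is congruent mod~$3$ to its old position. For $\cC'_{L-1}$: if $S_t = (v_0,v_{r+1})$ was at position $t$ in $\cC_{L-1}$, then new chord~1 lands at position $t$ (giving index $j_{L-1}+t = j$); the non-base sides of $\cC_{L+1}$, traversed in their own counterclockwise direction (which agrees with that of $\cC'_{L-1}$ because $\cC_{L+1}$'s interior has become part of $\cC'_{L-1}$'s), follow next and retain their original indices; new chord~2 then appears at position $t+(r''+2) \equiv t \pmod 3$, again of index $j$; and the remaining sides of $\cC_{L-1}$ are shifted by $r''+2 \equiv 0 \pmod 3$ so their indices are preserved. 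For $\cC'_L$ the side $(v_{i-1},v_i)$ occupies position $i$ in both the old and new orderings, while for $\cC''_L$ the side $(v_{s+i},v_{s+i+1})$ moves from old position $s+i+1$ to new position $i$, a shift by $s+1 \equiv 0 \pmod 3$. These congruences establish~(i) at once, and~(ii) for every deeper side follows by induction on level, since each child of the three new cells inherits its base side from one whose index has just been verified unchanged.

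The only real obstacle is careful orientation bookkeeping---verifying that ``counterclockwise around $\cC'_{L-1}$'' traverses the absorbed portion of $\cC_{L+1}$ in $\cC_{L+1}$'s own counterclockwise direction, which holds because the interior-on-the-left convention is preserved when two adjacent cells merge. Once this is pinned down, the index congruences are all elementary arithmetic in $\Z_3$.
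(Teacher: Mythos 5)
The paper does not give a proof of this lemma---it simply declares it and its companion (Lemma~\ref{3d surgery}) ``trivial and left to the reader.'' Your argument is a correct and careful instance of exactly the bookkeeping the authors have in mind: tracking the counterclockwise position of each side relative to the base of its cell, noting that every relevant offset ($s+1$ for $\cC'_L$/$\cC''_L$, $r''+2$ for the $\cC_{L+1}$ portion and the tail of $\cC_{L-1}$, and the unchanged prefix positions) is a multiple of~$3$ by 3-periodicity, and then propagating downward by induction on level since each descendant's base-side index has just been shown unchanged. The reduction of the closing case to the opening case via reversibility is also sound, since index-preservation is trivially symmetric under inverting a surgery. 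Two tiny cosmetic points: the equalities $j_{L-1}+t = j$ and the like should be congruences mod~$3$, and it would be worth one explicit sentence that the cells whose boundaries change are precisely $\cC_{L-1}$, $\cC_L$, $\cC_{L+1}$ and their three replacements, so the induction on level really does cover all remaining sides. Neither affects correctness.
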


The proofs of these lemmas are trivial and are left to the reader.  Note that the inverse of a 3-periodic surgery is a 3-periodic surgery.

The remainder of this section is devoted to proving that any two 3-periodic dissections with the same quiddity are linked by a series of 3-periodic surgeries.  The strategy is to show that each quiddity equivalence class of 3-periodic dissections contains a unique dissection on which no 3-periodic opening surgeries are possible.

\begin{defn}
Consider a 3-periodic dissection and a cell within it.
\begin{enumerate}
\item[(i)]
The cell is \textit{maximally open} if it admits no 3-periodic opening surgeries.
\smallbreak \item[(ii)]
The dissection is \textit{maximally open} if all of its cells are maximally open.
\end{enumerate}
\end{defn}

\begin{lem} \label{3dS count}
Consider a cell in a 3-periodic dissection.  The number of 3-periodic opening surgeries it admits is equal to the number of non-base sides it has which are chords of the same $\Z_3$-index as its base side.
\end{lem}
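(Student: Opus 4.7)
The plan is to unwind the definitions and invoke Lemma~\ref{3d surgery}(i). First, I would write out exactly what a 3-periodic opening surgery on the given cell is: surgery on an (unordered) pair of sides which are both chords, are distant from one another, share a common $\Z_3$-index, and one of which is the base side. Because the cell under consideration is non-base, its base side is automatically a chord---being the chord shared with the parent cell---and so the base side is always eligible to participate. Since the base side is fixed by the choice of cell, each 3-periodic opening surgery is determined uniquely by its choice of partner side~$S$.

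Next, I would enumerate the conditions on $S$: (i)~$S$ is a non-base side of the cell, (ii)~$S$ is a chord, (iii)~$S$ is distant from the base side, and (iv)~$S$ has the same $\Z_3$-index as the base side. The key observation is that Lemma~\ref{3d surgery}(i) says precisely that (iv) implies~(iii), so the four conditions collapse to (i), (ii), and~(iv). Counting the $S$ that satisfy these three conditions is exactly counting the non-base sides of the cell which are chords of the same $\Z_3$-index as the base side, matching the statement of the lemma.

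No substantive obstacle is expected; the proof amounts to bookkeeping, with all of the nontrivial content already packaged into Lemma~\ref{3d surgery}(i). The one point that warrants mild care is that the statement implicitly concerns non-base cells, since the notion of ``base side'' is defined only for such cells; for the base cell itself no opening surgery is possible because its base side is an edge rather than a chord, and the assertion is vacuous there.
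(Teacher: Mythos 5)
Your proof is correct and matches the paper's (implicit) reasoning: the paper dismisses this lemma as obvious, and the obvious argument is exactly the one you give, namely that each 3-periodic opening surgery is determined by its non-base partner chord and that Lemma~\ref{3d surgery}(i) makes the distance condition redundant once the $\Z_3$-indices agree.
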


\begin{cor} \label{MO}
A 3-periodic dissection is maximally open if and only if in every non-base cell, every non-base side of the same $\Z_3$-index as the base side is an edge.
\end{cor}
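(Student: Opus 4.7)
The plan is to deduce this corollary as an immediate consequence of Lemma~\ref{3dS count} and the definition of ``maximally open''. By definition, a 3-periodic dissection is maximally open if and only if every one of its cells is maximally open, i.e.\ admits no 3-periodic opening surgery. The base cell is automatically maximally open, since any opening surgery requires the base side of the cell on which it operates to be a chord, and the base side of the base cell is the base edge of the polygon. Thus only the non-base cells impose any genuine conditions.

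For a non-base cell $\cC$, Lemma~\ref{3dS count} asserts that the number of 3-periodic opening surgeries admitted by $\cC$ equals the number of non-base sides of $\cC$ which are chords of the same $\Z_3$-index as the base side of $\cC$. Since every side of $\cC$ is either an edge of the polygon or a chord of the dissection, this count vanishes precisely when every non-base side of $\cC$ of the same $\Z_3$-index as its base side is an edge. Hence $\cC$ is maximally open if and only if this condition holds, and conjoining over all non-base cells yields the statement of the corollary.

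The main obstacle here is essentially nil; the corollary is a direct reformulation of Lemma~\ref{3dS count} in the language of the maximally open condition. The only subtlety worth flagging is the need to verify that the base cell may safely be excluded from the statement, which follows from the observation that its base side is not a chord and so no opening surgery can ever affect it.
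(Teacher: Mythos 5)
Your proof is correct and follows exactly the line of reasoning the paper leaves implicit after declaring Lemma~\ref{3dS count} and Corollary~\ref{MO} to be obvious: reduce to Lemma~\ref{3dS count} on non-base cells, and observe (as the paper itself notes just after defining opening and closing surgeries) that the base cell can never admit an opening surgery because its base side is an edge rather than a chord.
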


These two results are also obvious.  In order to state the next result, suppose that $\cD$ is a 3-periodic dissection in which all cells of level~$> L$ are maximally open, but not all cells of level~$L$ are maximally open.  Let $\sigma_L$ be the total number of 3-periodic opening surgeries on cells of level~$L$ admitted by the entire dissection.

Fix a cell $\cC_L$ in $\cD$ of level~$L$ which admits at least one 3-periodic opening surgery.  Fix such a surgery, and let $\cD'$ be the new 3-periodic dissection it produces.  Let $\sigma'_L$ be the total number of 3-periodic opening surgeries on cells of level~$L$ admitted by $\cD'$.

\begin{lem}
The 3-periodic dissection $\cD'$ has the following properties:
\begin{enumerate}
\item[(i)]
All cells of level~$> L$ are maximally open.
\smallbreak \item[(ii)]
$\sigma'_L = \sigma_L - 1$.
\end{enumerate}
\end{lem}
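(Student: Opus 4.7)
\medbreak \noindent \textit{Proof plan.}
The plan is to classify the cells of $\cD'$ according to how the surgery on $\cC_L$ affected them, and then to invoke the preceding index-preservation lemma to reduce each statement to an elementary count.

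For~(i), I would observe that the cells of $\cD'$ at level~$> L$ split into two families: those untouched by the surgery, whose levels are also unchanged, and the descendants of $\cC_{L+1}$ that were at level $\geq L+3$ in $\cD$ and whose levels drop by~$2$. In both cases the cell has exactly the same sides and the same base side in $\cD'$ as in $\cD$ (the surgery alters only the three chords bordering $\cC_L$, and no descendant of $\cC_{L+1}$ shares any of those), and by the index-preservation lemma it retains the $\Z_3$-index on each of its sides. Corollary~\ref{MO} then gives that maximal openness is preserved for each such cell.

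For~(ii), I would decompose the level-$L$ cells of $\cD'$ into three groups: the level-$L$ cells of $\cD$ other than $\cC_L$ (unchanged, and hence contributing equally to $\sigma_L$ and $\sigma'_L$); the two new cells $\cC'_L$ and $\cC''_L$ produced by the opening surgery; and the children of $\cC_{L+1}$, which were at level $L+2$ in $\cD$ and drop to level~$L$ in~$\cD'$. Cells in the third group were maximally open in $\cD$ (as all level-$>L$ cells were) and, by the same preserved-structure-and-indices argument as in~(i), remain maximally open in $\cD'$, contributing~$0$ to $\sigma'_L$.

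What remains is to verify that $\cC'_L$ and $\cC''_L$ together admit exactly one fewer opening surgery than $\cC_L$ did. By Lemma~\ref{3dS count}, each count equals the number of non-base sides which are chords of the same $\Z_3$-index as the base side. The non-base sides of $\cC'_L$ and $\cC''_L$ taken together are exactly the non-base sides of $\cC_L$ with the chord $(v_s, v_{s+1})$ removed; moreover, the two new base sides $(v_{s+1}, v_{r+1})$ and $(v_0, v_s)$ carry the same $\Z_3$-index as the old base side $(v_{r+1}, v_0)$, since by the index-preservation lemma they inherit it from the two old sides they replaced, both of which shared this index by the 3-periodicity of the surgery. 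Because the removed chord $(v_s, v_{s+1})$ itself had this common index and was counted exactly once in the tally for $\cC_L$, its deletion yields the desired reduction $\sigma'_L = \sigma_L - 1$. The main care needed throughout lies in bookkeeping the $\Z_3$-indices, which is entirely handled by the preceding lemma.
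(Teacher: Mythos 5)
Your proof is correct and follows essentially the same route as the paper's: the same three-way classification of level-$L$ cells in $\cD'$, the same observation that cells of level $>L$ in $\cD'$ were already at level $>L$ in~$\cD$, and the same appeal to Lemma~\ref{3dS count} to show that dropping the chord $(v_s,v_{s+1})$ (the base side of $\cC_{L+1}$) from the tally reduces the count by exactly one. You simply carry out the $\Z_3$-index bookkeeping in more explicit detail than the paper does.
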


\begin{proof}
We use the notation of Figure~\ref{OS}; subscripts denote levels.  The surgery merges the parent $\cC_{L-1}$ of $\cC_L$ with one of its children, $\cC_{L+1}$, to form $\cC'_{L-1}$.  This divides $\cC_L$ into $\cC'_L$ and $\cC''_L$.  The other cells are not changed, and the only change in their levels is that those of the descendants of $\cC_{L+1}$ all decrease by~2.

For~(i), all cells of level~$> L$ in $\cD'$ were cells of level~$> L$ in $\cD$, and so they are maximally open.  For~(ii), there are three types of level~$L$ cells in $\cD'$:
\begin{itemize}
\item
those which were level~$L$ cells in $\cD$ other than $\cC_L$;
\item
those which were children of $\cC_{L+1}$ in $\cD$;
\item
the two new cells $\cC'_L$ and $\cC''_L$.
\end{itemize}

The number of opening surgeries in cells of the first type is not changed by the surgery, and cells of the second type are maximally open.  An application of Lemma~\ref{3dS count} shows that the sum of the number of opening surgeries in $\cC'_L$ and $\cC''_L$ is one less than the number of opening surgeries in $\cC_L$, because the base side of $\cC_{L+1}$ is no longer available.  The result follows.
\end{proof}

Let us remark that the new dissection $\cD'$ may admit arbitrarily many more opening surgeries than the original dissection $\cD$.  However, they will all be in $\cC'_{L-1}$, which is of level $L-1$.  Therefore an obvious induction argument starting from the highest level present in $\cD$ and proceeding downward yields the following result.

\begin{prop} \label{3d to MO}
Any 3-periodic dissection can be transformed into a maximally open 3-periodic dissection by a sequence of 3-periodic opening surgeries.
\end{prop}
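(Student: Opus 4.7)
The plan is to induct on the maximum level $L^\ast$ at which some cell of $\cD$ admits a 3-periodic opening surgery. If no such level exists, then $\cD$ is already maximally open and the empty sequence of surgeries suffices; this is the base case. Otherwise, by the choice of $L^\ast$, every cell of level strictly greater than $L^\ast$ is maximally open in $\cD$, so the hypothesis of the preceding lemma is satisfied at $L = L^\ast$. Note also that $L^\ast \ge 1$: the base cell's base side is an edge, not a chord, so it never admits an opening surgery, and the induction always bottoms out cleanly.

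Next I would exhaust the opening surgeries available at level $L^\ast$. Pick any cell at level $L^\ast$ admitting a 3-periodic opening surgery and perform it. Part~(i) of the preceding lemma guarantees that all cells of level $> L^\ast$ in the resulting dissection are still maximally open, so the lemma may be applied repeatedly. Part~(ii) guarantees that the total count $\sigma_{L^\ast}$ of 3-periodic opening surgeries at level $L^\ast$ strictly drops by one. Because $\sigma_{L^\ast}$ is a non-negative integer, this process must terminate after finitely many surgeries in a dissection $\cD''$ in which every cell of level $\ge L^\ast$ is maximally open.

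Finally I would apply the inductive hypothesis to $\cD''$. The new surgeries only produced cells of level $L^\ast$ (the two pieces $\cC'_{L^\ast}$ and $\cC''_{L^\ast}$, which no longer admit any opening surgery once $\sigma_{L^\ast}$ has been driven to zero) and of level $L^\ast - 1$ (the merged cells $\cC'_{L^\ast - 1}$), while lowering the levels of descendants of the affected children by two; no cell at any level $> L^\ast$ was altered. Consequently the maximum level in $\cD''$ admitting a 3-periodic opening surgery is strictly smaller than $L^\ast$, and the induction descends. Concatenating the surgery sequence that converts $\cD''$ to a maximally open dissection with the sequence used to reach $\cD''$ from $\cD$ produces the required transformation. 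I do not anticipate any real obstacle: the whole argument is a straightforward termination argument built on the preceding lemma, which packages all the nontrivial bookkeeping about how surgeries interact with levels.
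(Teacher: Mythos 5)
Your proof is correct and takes essentially the same route as the paper, which invokes ``an obvious induction argument starting from the highest level present in $\cD$ and proceeding downward.'' You have simply supplied the bookkeeping the paper elides: inducting on the maximal level $L^\ast$ admitting an opening surgery, using part~(ii) of the preceding lemma to drive $\sigma_{L^\ast}$ to zero, and using part~(i) plus the observation that new surgeries appear only at level $L^\ast-1$ to conclude that the quantity $L^\ast$ strictly decreases.
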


Figure~\ref{2 openings} gives a simple example of three dissections of a hexadecagon.  The chords are labelled by their $\Z_3$-indices.  In the initial dissection on the left, there is one cell in each of the levels $0$, $1$, $2$, $3$, and~$4$.  The only possible opening surgery is on the level~$2$ cell, producing the middle dissection, which has one level~$0$ cell (as always), one level~$1$ cell, and three level~$2$ cells.  It too has only one opening surgery, on the level~$1$ cell, producing the dissection on the right.  It is maximally open and has one level~$0$ cell, two level~$1$ cells, and two level~$2$ cells.

\begin{figure}[htbp]
\scriptsize
$$
\xymatrix @!0 @R=0.35cm @C=0.30cm
 {
&&&&&&\bullet\ar@{-}[llld]\ar@{-}[rrr]\ar@{-}[rrrrrrd]|-{\textcolor{red}{1}}
\ar@{-}[rrrrrrrrddddddddd]_{\textcolor{red}{2}}&&&\bullet\ar@{-}[rrrd]
\\
&&&\bullet\ar@{-}[lldd]&&&&&&&&& \bullet\ar@{-}[rrdd]\\
\\
&\bullet\ar@{-}[ldd]&&&&&&&&&&&&&\bullet\ar@{-}[rdd]\\
\\
\bullet\ar@{-}[dd]&&&&&&&&&&&&&&&\bullet\ar@{-}[dd]\\
&&&&&&&&&&&&\\
\bullet\ar@{-}[rdd]&&&&&&&&&&&&&&&\bullet\ar@{-}[ldd]\\
\\
&\bullet\ar@{-}[rrdd]\ar@{-}[rrrrrrrrddd]^{\textcolor{red}{2}}&&&&&&&&&&&&&\bullet\ar@{-}[lldd]\\
\\
&&&\bullet\ar@{-}[rrrd]\ar@{-}[rrrrrrd]|-{\textcolor{red}{1}}&&&&&&&&& \bullet\ar@{-}[llld]\\
&&&&&&\bullet\ar@{-}[rrr]\ar@<0.2mm>@{-}[rrr]\ar@<-0.2mm>@{-}[rrr]&&&\bullet
}
\ \
\xymatrix @!0 @R=0.35cm @C=0.30cm
 {
&&&&&&\bullet\ar@{-}[llld]\ar@{-}[rrr]\ar@{-}[rrrrrrd]_{\textcolor{red}{1}}\ar@{-}[lllllddddddddd]^{\textcolor{red}{2}}&&&\bullet\ar@{-}[rrrd]
\\
&&&\bullet\ar@{-}[lldd]&&&&&&&&& \bullet\ar@{-}[rrdd]\\
\\
&\bullet\ar@{-}[ldd]&&&&&&&&&&&&&\bullet\ar@{-}[rdd]\\
\\
\bullet\ar@{-}[dd]&&&&&&&&&&&&&&&\bullet\ar@{-}[dd]\\
&&&&&&&&&&&&\\
\bullet\ar@{-}[rdd]&&&&&&&&&&&&&&&\bullet\ar@{-}[ldd]\\
\\
&\bullet\ar@{-}[rrdd]&&&&&&&&&&&&&\bullet\ar@{-}[lldd]\ar@{-}[lllllddd]_{\textcolor{red}{2}}\\
\\
&&&\bullet\ar@{-}[rrrd]\ar@{-}[rrrrrrd]^{\textcolor{red}{1}}&&&&&&&&& \bullet\ar@{-}[llld]\\
&&&&&&\bullet\ar@{-}[rrr]\ar@<0.2mm>@{-}[rrr]\ar@<-0.2mm>@{-}[rrr]&&&\bullet
}
\ \
\xymatrix @!0 @R=0.35cm @C=0.30cm
{
&&&&&&\bullet\ar@{-}[llld]\ar@{-}[rrr]\ar@{-}[lllllddddddddd]_{\textcolor{red}{2}}\ar@{-}[lllddddddddddd]^{\textcolor{red}{1}}&&&\bullet\ar@{-}[rrrd]
\\
&&&\bullet\ar@{-}[lldd]&&&&&&&&& \bullet\ar@{-}[rrdd]\ar@{-}[lllddddddddddd]_{\textcolor{red}{1}}\\
\\
&\bullet\ar@{-}[ldd]&&&&&&&&&&&&&\bullet\ar@{-}[rdd]\\
\\
\bullet\ar@{-}[dd]&&&&&&&&&&&&&&&\bullet\ar@{-}[dd]\\
&&&&&&&&&&&&\\
\bullet\ar@{-}[rdd]&&&&&&&&&&&&&&&\bullet\ar@{-}[ldd]\\
\\
&\bullet\ar@{-}[rrdd]&&&&&&&&&&&&&\bullet\ar@{-}[lldd]\ar@{-}[lllllddd]_{\textcolor{red}{2}}\\
\\
&&&\bullet\ar@{-}[rrrd]&&&&&&&&& \bullet\ar@{-}[llld]\\
&&&&&&\bullet\ar@{-}[rrr]\ar@<0.2mm>@{-}[rrr]\ar@<-0.2mm>@{-}[rrr]&&&\bullet
}
$$
\caption{Opening surgeries producing a maximally open dissection}
\label{2 openings}
\normalsize
\end{figure}
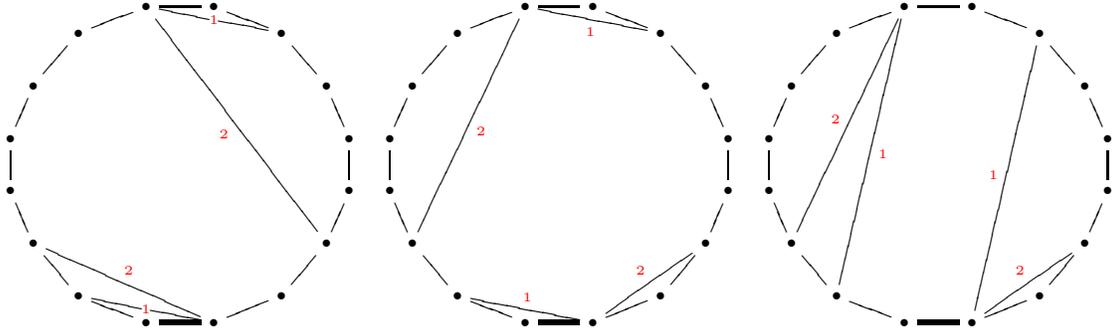

\begin{rem}
From a practical standpoint, a dissection containing cells admitting multiple opening surgeries may be transformed into a maximally open dissection more efficiently by performing what might be called ``3-periodic maximal opening surgery'' in place of 3-periodic opening surgery.  Given such a cell, consider the polygon formed by the vertices of its base side and all of its chord sides of the same $\Z_3$-index as its base side.  Half of the sides of this polygon are sides of the cell and half are not part of the dissection.  Delete the former from the dissection and replace them by the latter.  This process is equivalent to a sequence of ordinary 3-periodic opening surgeries.
\end{rem}

\subsection{Operations enlarging the polygon} \label{BUBA}
In this section we recall from \cite{Val} two operations on 3-periodic dissections.  They are quite different in nature from surgery, in that both change not only the quiddity but even the number of vertices of the polygon.  Consider an arbitrary 3-periodic dissection of an $(n+2)$-gon, with quiddity
\begin{equation*}
(a_0, \ldots, a_i, a_{i+1}, \ldots, a_{n+1}).
\end{equation*}

\begin{itemize}

\item
\textit{Blow-up:}
Add a triangle to the dissection, attaching it to the $(i, i+1)$-edge.
This results in a 3-periodic dissection of an $(n+3)$-gon, with quiddity
\begin{equation*}
(a_0,\ldots, a_{i-1}, a_i+1,\, 1,\, a_{i+1}+1, a_{i+2}, \ldots, a_{n+1}).
\end{equation*}

\item
\textit{Expansion:}
This operation expands one of the
sub-polygons contacting the $i^{\thup}$ vertex,
replacing that vertex with three new edges.
The construction is as follows:
expand the $i^{\thup}$ vertex to an edge
and call its endpoints $i'$ and $i''$.
Place two new vertices along this new edge,
dividing it into three edges.
Of the $a_i - 1$ chords contacting
the $i^{\thup}$ vertex in the original dissection,
allot $a'_i - 1$ of them to $i'$ and $a''_i - 1$ of them to $i''$,
where $a'_i$ and $a''_i$ are positive integers summing to $a_i+1$.

\begin{figure}[!htb]
\footnotesize
$$
 \xymatrix @!0 @R=0.32cm @C=0.45cm
{
&&&\ar@{-}[dddddddd]\ar@{-}[rrd]&
\\
&\ar@{-}[rrddddddd]&&&& \\
\\
\ar@{-}[rrrddddd]&&&&&&&\ar@{-}[llllddddd]\\
\\
\ar@{-}[rrrddd]&&&&&&&&\ar@{-}[lllllddd]\\
\\
&\ar@{-}[rrd]&&&&&&& \ar@{-}[llllld]\\
&&& i \ar@{-}[rruuuuuuu]&
}
\qquad\qquad\qquad
\xymatrix @!0 @R=0.32cm @C=0.45cm
{
&&&&\ar@{-}[ldddddddd]\ar@{-}[rrd]&
\\
&\ar@{-}[rrddddddd]&&&&& \\
\\
\ar@{-}[rrrddddd]&&&&&&&&\ar@{-}[lddddd]\\
\\
\ar@{-}[rrrddd]&&&&&&&&&\ar@{-}[llddd]\\
\\
&\ar@{-}[rrd]&&&&&&&& \ar@{-}[lld]\\
&&& i' \ar@{-}[rd]
&&&& i'' \ar@{-}[luuuuuuu]\\
&&&& \cdot \ar@{-}[r]& \cdot \ar@{-}[rru]
}
$$
\caption*{Before and after an expansion}
\normalsize
\end{figure}
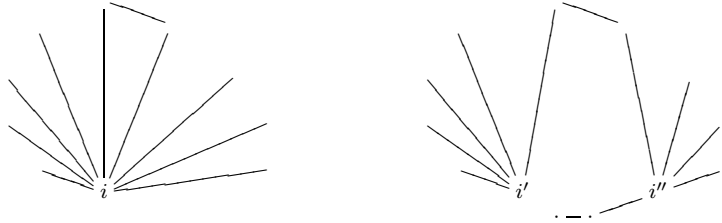

This yields a 3-periodic dissection of an $(n+5)$-gon,
with no chords contacting the two vertices between $i'$ and $i''$.
Its quiddity is
\begin{equation*}
(a_0, \ldots, a_{i-1}, a'_i,\, 1,\, \,1,\, a''_i, a_{i+1}, \ldots, a_{n+1}).
\end{equation*}

\end{itemize}

Because the vertices are cyclically ordered,
both operations are defined for $0 \le i \le n+1$.
The blow-up operation is well known and was the main technical tool
used by Conway and Coxeter in~\cite{CoCo}.
To our knowledge, expansion was first considered in~\cite{Val},
where the two operations are used together to prove Theorem~\ref{ValRMSthm}.
The idea of the proof is to regard the operations
as acting on quiddities rather than dissections.
Let us briefly explain it.
The following lemma is an easy exercise.

\begin{lem}[\cite{Val}, Section~1.2]
\label{operations}
Blow-up and expansion preserve solutions of~\eqref{SLMatEqInt}.
Blow-up increases $N$ by~$1$, expansion increases $N$ by~$3$,
and both increase $T$ by~$3$.
\end{lem}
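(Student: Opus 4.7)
The plan is to reduce the lemma to a pair of $2\times 2$ matrix identities together with a bookkeeping check on the lengths and sums. Writing $M(a):=\begin{pmatrix}a&-1\\1&\phantom{-}0\end{pmatrix}$, the left-hand side of \eqref{SLMatEqInt} is the product $M(a_1)\cdots M(a_N)$. Since $M_1\cdots M_N=\pm\Id$ if and only if $M_{i+1}\cdots M_N M_1\cdots M_i=\pm\Id$, the matrix equation is invariant under cyclic permutation of the indices. Therefore, in both operations, I may assume without loss of generality that the operation is performed in the interior of the list, not across the ``wrap-around'' from $a_{n+1}$ to $a_0$; the boundary cases then reduce to the interior cases by cyclic rotation of the indices.

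For the \textbf{blow-up} identity, I would establish
\begin{equation*}
M(a+1)\,M(1)\,M(b+1)\ =\ M(a)\,M(b),
\end{equation*}
which is a direct two-step computation using $M(1)=\bigl(\begin{smallmatrix}1&-1\\1&0\end{smallmatrix}\bigr)$. Because this identity leaves the product $M(a_0)\cdots M(a_{n+1})$ unchanged, blow-up carries solutions of \eqref{SLMatEqInt} to solutions \emph{with the same sign} of $\pm\Id$. The count $N\mapsto N+1$ is obvious (one new entry is inserted) and $T\mapsto T+3$ follows because $a_i$ and $a_{i+1}$ each increase by~$1$ and a new entry equal to~$1$ is added.

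For the \textbf{expansion} identity, I would first compute $M(1)^2=\bigl(\begin{smallmatrix}0&-1\\1&-1\end{smallmatrix}\bigr)$, then verify
\begin{equation*}
M(a')\,M(1)\,M(1)\,M(a'')\ =\ -\,M(a'+a''-1).
\end{equation*}
Setting $a'=a'_i$, $a''=a''_i$ with $a'_i+a''_i=a_i+1$, this says that replacing the block $M(a_i)$ by the block $M(a'_i)M(1)M(1)M(a''_i)$ multiplies the total product by $-1$. Hence expansion sends an equation of the form $M(a_0)\cdots M(a_{n+1})=\pm\Id$ to one of the form $M(\text{new})\cdots=\mp\Id$: still of type $\pm\Id$, so solutions are preserved. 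The length increases by $4-1=3$, and the sum increases by $(a'_i+1+1+a''_i)-a_i=(a_i+1+2)-a_i=3$.

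Assembling these four ingredients (cyclic invariance, the two local matrix identities, and the arithmetic of lengths and sums) yields the lemma. There is no genuine obstacle here: the only mild subtlety is remembering that ``preservation of solutions'' allows the sign in $\pm\Id$ to flip, which is exactly what happens for expansion; in fact this sign flip is consistent with the parity statement $T=3(N-2)-6k$ of \eqref{AnyA}, since expansion increases $N$ by~$3$ and $T$ by~$3$, thereby changing the integer $k$ (and the sign $(-1)^{k+1}$) by~$1$.
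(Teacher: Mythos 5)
Your proof is correct. The paper itself does not include a proof of this lemma; it is stated as ``an easy exercise'' and delegated to \cite{Val}, Section~1.2. Your direct verification via the two local matrix identities $M(a{+}1)M(1)M(b{+}1)=M(a)M(b)$ and $M(a')M(1)^2M(a'')=-M(a'{+}a''{-}1)$, combined with cyclic invariance of the product and the length/sum bookkeeping, is exactly the standard argument, and your concluding consistency check against~\eqref{AnyA} (blow-up fixes $k$ and the sign, expansion shifts $k$ by~$1$ and flips the sign) is a nice sanity check that is stated in the paper just after~\eqref{AnyA}.
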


Clearly any 3-periodic dissection may be obtained by applying a sequence of blow-ups and expansions to a triangle.  The next theorem is the analog of this observation for solutions of~\eqref{SLMatEqInt}.  Combining it with Lemma~\ref{operations} leads to Theorem~\ref{ValRMSthm}.

\begin{thm}[\cite{Val}, Theorem~2]
\label{ValRMSthm2}
Any positive solution $(a_1, \ldots, a_N)$ of~\eqref{SLMatEqInt}
may be obtained from the quiddity $(1, 1, 1)$ of the triangle
via a sequence of blow-ups and expansions.
\end{thm}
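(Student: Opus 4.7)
My plan is to argue by induction on $N$. The base case $N=3$ is immediate: expanding the matrix product and setting it equal to $\pm\Id$ yields the unique positive solution $(1,1,1)$, which is the quiddity of the triangle.

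For $N>3$, the first and crucial step is to exhibit some entry $a_i=1$. The paper has already recorded that \eqref{SLMatEqInt} is equivalent to $K_{N-1}(a_1,\ldots,a_{N-1})=0$ and $K_{N-1}(a_2,\ldots,a_N)=0$. A short induction on $n$ using the recursion $K_n=a_n K_{n-1}-K_{n-2}$ shows that if $a_1,\ldots,a_n\geq 2$, then $K_n\geq K_{n-1}\geq\cdots\geq 2$; in particular $K_{N-1}$ would be nonzero, a contradiction. So some $a_i$ must equal $1$.

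Given such an $i$, split into two cases according to its cyclic neighbors. In \textbf{Case A}, $a_{i-1},a_{i+1}\geq 2$, and the matrix identity
\[
\begin{pmatrix}a_{i-1}&-1\\1&0\end{pmatrix}
\begin{pmatrix}1&-1\\1&0\end{pmatrix}
\begin{pmatrix}a_{i+1}&-1\\1&0\end{pmatrix}
=
\begin{pmatrix}a_{i-1}-1&-1\\1&0\end{pmatrix}
\begin{pmatrix}a_{i+1}-1&-1\\1&0\end{pmatrix}
\]
shows that deleting $a_i$ and subtracting $1$ from each neighbor produces a positive solution of length $N-1$; applying a blow-up to this shorter solution recovers the original. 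In \textbf{Case B} some neighbor of $a_i$ also equals $1$, so after cyclic rotation we may take $a_i=a_{i+1}=1$. The companion identity
\[
\begin{pmatrix}a_{i-1}&-1\\1&0\end{pmatrix}
\begin{pmatrix}1&-1\\1&0\end{pmatrix}^{2}
\begin{pmatrix}a_{i+2}&-1\\1&0\end{pmatrix}
= -\begin{pmatrix}a_{i-1}+a_{i+2}-1&-1\\1&0\end{pmatrix}
\]
shows that replacing the block $(a_{i-1},1,1,a_{i+2})$ by the single entry $a_{i-1}+a_{i+2}-1\geq 1$ produces a positive solution of length $N-3$ (the sign flip is absorbed by the $\pm$ on the right of~\eqref{SLMatEqInt}); applying an expansion recovers the original. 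In Case B we must have $N-3\geq 3$, which holds because positive solutions of length $1$ or $2$ are ruled out by direct inspection of the matrix product, so Case B cannot occur for $N<6$. The induction hypothesis then completes the argument in both cases.

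The main obstacle is the existence of some $a_i=1$; once this is in hand, the rest is elementary matrix algebra combined with Lemma~\ref{operations}, which already records the invariance of~\eqref{SLMatEqInt} under blow-ups and expansions. I expect the continuant-positivity step to be the real content of the proof, although it is a familiar fact about Hirzebruch-Jung continued fractions with a clean inductive proof.
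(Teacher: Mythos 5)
Your proof is correct and complete. Note first that this paper does not prove Theorem~\ref{ValRMSthm2}: it is quoted from \cite{Val}, Theorem~2, and the only hint the authors give of the proof is the remark just before the statement, that the idea is to ``regard the operations as acting on quiddities rather than dissections.'' That is precisely what you do. Substantively, the two matrix identities you use are correct and are exactly the quiddity-level inverses of blow-up and of expansion (with $a'_i=a_{i-1}$ and $a''_i=a_{i+2}$); the continuant estimate, that $a_1,\ldots,a_n\geq 2$ forces $K_n\geq K_{n-1}\geq\cdots\geq K_1\geq 2$ via $K_n=a_nK_{n-1}-K_{n-2}\geq 2K_{n-1}-K_{n-2}$, correctly rules out an all-$\geq 2$ solution of $K_{N-1}(a_1,\ldots,a_{N-1})=0$; and the small cases $N=4,5$ in Case~B are disposed of correctly, since $M(a)$ has lower-left entry $1$ and $M(a)M(b)$ has lower-left entry $b\geq 1$, so neither can be $\pm\Id$. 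Two minor presentational points you may wish to make explicit: the induction in Case~B is a strong induction (it drops $N$ by $3$), and the reduction to a non-wrapping block of consecutive indices uses that \eqref{SLMatEqInt} is invariant under cyclic rotation of $(a_1,\ldots,a_N)$, which follows by conjugating both sides by the first matrix factor.
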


The terminology ``blow-up'' comes from the theory of toric varieties; see~\cite{Ful}, pp.~43--44.  Expansion also has a geometric interpretation: it adds a half-turn to the corresponding rational fan.  We discuss connections between~\eqref{SLMatEqInt} and toric surfaces in Section~\ref{ProjPlane}.

\subsection{Quiddity classes of dissections}
We have come to our main technical result:

\begin{thm}
\label{MainTechThm}
Every equivalence class of 3-periodic dissections with the same quiddity contains a unique maximally open 3-periodic dissection.
\end{thm}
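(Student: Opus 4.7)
\emph{Proof plan.} The theorem has two parts: existence and uniqueness.

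Existence is immediate. Proposition~\ref{3d to MO} shows that from any 3-periodic dissection, a finite sequence of 3-periodic opening surgeries reaches a maximally open 3-periodic dissection, and Lemma~\ref{ObvLem} guarantees that the quiddity is preserved throughout. Thus each quiddity equivalence class contains a maximally open member.

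For uniqueness, the plan is induction on the number of chords. The base case of zero chords corresponds to the all-ones quiddity, realized uniquely by the whole polygon as a single cell. For the inductive step, let $D$ and $D'$ be two maximally open 3-periodic dissections of the $(n+2)$-gon sharing the quiddity $(a_0,\ldots,a_{n+1})$ and having at least one chord. The strategy is to identify a chord common to both $D$ and $D'$ from the quiddity alone, and then apply induction to the two sub-polygons obtained by splitting along that chord, each of which inherits a smaller maximally open 3-periodic dissection.

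The natural candidate for the canonical chord is the base side of an \emph{outermost cell}, that is, a cell whose only non-edge side is its base side. A preliminary lemma would establish that, in any 3-periodic dissection, each maximal run of $m$ consecutive ones in the quiddity forces the enclosing cell $C$ to contain the $m+2$ consecutive polygon vertices spanning the run together with its two flanks, so $m+2 \le |C|$ with $|C| \equiv 0 \pmod 3$. This forces $m \equiv 1 \pmod 3$, and the enclosing cell is outermost precisely when $|C| = m+2$. In particular every maximal run of ones has length $\equiv 1 \pmod 3$, and the candidate outermost cells can be read off directly from the quiddity.

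The main obstacle is showing that in a maximally open dissection, the quiddity alone singles out one maximal run of ones that is guaranteed to correspond to an outermost cell in every maximally open representative. A priori a run of length $3d+1$ may correspond either to an outermost $(3+3d)$-cell or to a stretch of interior vertices of a larger non-outermost cell, and these two alternatives realize different dissections with the same quiddity. I would attack this via the $\Z_3$-index bookkeeping of Section~\ref{3dS}: the base side of any candidate outermost cell inherits a specific $\Z_3$-index from its parent cell, and applying the maximally open hypothesis to that parent (using Lemma~\ref{3d surgery} to translate the opening surgery constraint into a forbidden sister-side configuration of chords) should rule out the non-outermost alternative at one canonical run, for instance at the first maximal run of ones encountered counterclockwise from the base edge. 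With this canonical outermost cell pinned down from the quiddity, peeling it off reduces both $D$ and $D'$ to maximally open 3-periodic dissections of a smaller polygon sharing a common smaller quiddity (the two flanking entries decreasing by one and the run itself vanishing), and the inductive hypothesis then yields $D = D'$.
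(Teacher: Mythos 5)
Your existence argument is exactly the paper's: Proposition~\ref{3d to MO} supplies a maximally open representative in each quiddity class, and Lemma~\ref{ObvLem} shows surgery (hence Proposition~\ref{3d to MO}) preserves the quiddity. That part is fine.

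For uniqueness, however, you take a genuinely different route from the paper, and the route you propose has a concrete gap. Your ``preliminary lemma'' asserts that every maximal run of $m$ consecutive ones in the quiddity of a 3-periodic dissection satisfies $m \equiv 1 \pmod{3}$, deduced from $m+2 \le |C|$ and $|C| \equiv 0 \pmod 3$. This implication does not hold, because $|C|$ can strictly exceed $m+2$. A counterexample: dissect a $9$-gon with chords $(3,5)$, $(5,7)$, $(7,0)$ into three triangles and the hexagon $(0,1,2,3,5,7)$. The quiddity is $(2,1,1,2,1,3,1,3,1)$, which has a maximal run of ones of length $2$. Here the run $\{1,2\}$ sits inside the hexagonal cell $|C| = 6 > 4 = m+2$. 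So the arithmetic filter you propose for identifying candidate terminal cells does not exist, and the subsequent plan to ``rule out the non-outermost alternative at one canonical run'' via $\Z_3$-bookkeeping rests on a false premise. You also do not address how maximal openness would be inherited by the two sub-dissections after splitting along the putative canonical chord; this is not automatic, since maximal openness is defined relative to the base edge, and the sub-polygon not containing the base edge acquires a new base.

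The paper's proof avoids canonical identification entirely and sidesteps splitting. It picks \emph{any} terminal cell $\tilde\cC$ in $\tilde\cD$, with vertices $i, \ldots, j$, and then analyses the cell $\cC$ of $\cD$ containing $i, \ldots, j$ \emph{without} assuming $\cC$ is terminal. In the case $j > i+2$, it applies the inverse of the expansion operation (Section~\ref{BUBA}), deleting vertices $i+1, i+2, i+3$; this preserves maximal openness of both $\cD$ and $\tilde\cD$ and is amenable to induction. In the case $j = i+2$, it applies the inverse of blow-up; if $\cC$ in $\cD$ is not a triangle, the paper derives a contradiction by showing that a 3-periodic surgery on $\cC$ is forced to be a closing surgery (since $\cD$ is maximally open), producing a dissection $\hat\cD$ that would, by induction, equal $\tilde\cD$ but fails to be maximally open. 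This case analysis, and in particular the $j = i+2$ contradiction argument, is the substantive content of the uniqueness proof; your plan has no analogue of it and, as it stands, cannot be completed.
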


\begin{proof}
Keeping in mind Lemma~\ref{ObvLem}, we see by Proposition~\ref{3d to MO} that every quiddity class of 3-periodic dissections contains at least one maximally open dissection.  Therefore the following proposition proves the theorem.
\end{proof}

\begin{prop}
If two maximally open 3-periodic dissections have the same quiddity, then they are equal.
\end{prop}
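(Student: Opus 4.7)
I would proceed by strong induction on $n$, where the polygon has $n+2$ vertices; the base case $n=1$ is immediate since a triangle admits only the trivial dissection. For the inductive step, let $\cD_1, \cD_2$ be maximally open 3-periodic dissections of the $(n+2)$-gon sharing a common quiddity $(a_0, \ldots, a_{n+1})$, with base cells $\cC_1, \cC_2$ (the cells containing the base edge $(n+1, 0)$). The strategy is first to show that $\cC_1 = \cC_2$, and then to recurse into the sub-polygons cut off by the chord sides of the common base cell.

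The central point is that the base cell is determined by the quiddity under the maximally open hypothesis. The key structural input is this: in any non-base cell of size $3+3d$ in a maximally open dissection, Corollary~\ref{MO} forces the $d$ non-base sides at the same $\Z_3$-index as the base side to be polygon edges. Equivalently, the $3+3d$ vertices of such a cell partition into $d+1$ groups of three consecutive cell-vertices each, where the last vertex of each group is consecutive in the polygon to the first vertex of the next. I would use this triple structure to reconstruct $\cC_i$ recursively: going counterclockwise from vertex $0$, the first non-base side of $\cC_i$ is $(0, v_1^{(i)})$, which is the polygon edge $(0,1)$ iff $a_0 = 1$; otherwise it is a chord cutting off a sub-polygon whose root cell has its own triple structure. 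Combining this constraint with the quiddity values at vertices $1, \ldots, v_1^{(i)} - 1$ pins down $v_1^{(i)}$ uniquely, and iterating identifies every vertex of the base cell.

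Once the base cells coincide, $\cC_1 = \cC_2 =: \cC$, the $(n+2)$-gon is partitioned by $\cC$ into $\cC$ itself and sub-polygons $P_k$ attached along its chord sides. Each $P_k$ has fewer than $n+2$ vertices. The restrictions of $\cD_1, \cD_2$ to $P_k$ are again maximally open 3-periodic dissections: taking the relevant chord of $\cC$ as the base edge of $P_k$ shifts the $\Z_3$-indexing by a global constant, so the condition ``non-base side at the same index as the base side is a polygon edge'' is preserved under restriction. The restricted quiddities coincide as well, since they are obtained from the original quiddity by subtracting $\cC$'s contribution at the two endpoints of the chord. By the inductive hypothesis, $\cD_1$ and $\cD_2$ agree on each $P_k$, and together with $\cC_1 = \cC_2$ this yields $\cD_1 = \cD_2$.

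The main obstacle is turning the recursive identification of the base cell's vertices into a rigorous argument. A delicate point is disambiguating runs of $1$s in the quiddity: a maximal run of length $1+3d$ may arise either from a ``leaf cell'' (a non-base cell whose non-base sides are all polygon edges, so its $3+3d$ vertices are consecutive along the polygon) or from a stretch of base-cell vertices whose incident base-cell sides are all polygon edges. The triple structure constrains both possibilities tightly, and the maximally open condition must be leveraged to show that, for any given quiddity pattern, only one of these interpretations yields a valid maximally open dissection, so that the quiddity unambiguously determines the base cell and the recursion can proceed.
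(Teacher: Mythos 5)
Your proposal diverges from the paper's proof in a way that leaves a genuine gap exactly where you flag ``the main obstacle.''  Both arguments induct on $n$, but the paper explicitly warns (in the remark immediately preceding its proof) that one cannot directly ``apply the result to an appropriate sub-dissection \ldots\ because the quiddity alone does not reveal the cell structure'' --- which is precisely what your step ``the base cell is determined by the quiddity'' requires.  You correctly observe that a non-base $3d$-cell in a maximally open dissection has a forced triple structure (its $d$ sides at the base side's $\Z_3$-index are, apart from the base side, all polygon edges), but this constraint says nothing about the base cell itself: Corollary~\ref{MO} exempts the base cell from any $\Z_3$-restriction.  So the recursion ``pins down $v_1^{(i)}$ uniquely'' is precisely the unproved claim, and it is circular as stated: to decide whether a run of $1$'s belongs to the base cell or is swallowed by a leaf cell hanging off a chord $(0,v_1)$, you would need to already know where $v_1$ is.  You acknowledge that this disambiguation has not been carried out, and it is not a minor detail --- it is the entire content of the proposition in your framing.

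The paper's proof sidesteps the problem by never trying to locate the base cell.  Instead it picks a \emph{terminal} cell $\tilde\cC$ of $\tilde\cD$ (identified purely from the quiddity by a maximal run of $1$'s flanked by values $>1$), notes only that the same vertices live in a single cell $\cC$ of $\cD$, and reduces $n$ by applying inverse expansion (when $\tilde\cC$ is not a triangle) or inverse blow-up (when it is) to both dissections --- both operations from Section~\ref{BUBA}, both visibly preserving maximal openness.  The delicate case is when $\tilde\cC$ is a triangle but $\cC$ is not; there the paper produces a contradiction by performing a $\Z_3$-closing surgery on $\cC$ (possible because $a_i, a_{i+2} > 1$ give two chord sides of $\cC$ at the same $\Z_3$-index, and neither can be the base side by maximal openness), obtaining a non-maximally-open $\hat\cD$ that the induction then forces to equal $\tilde\cD$.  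The point is that this argument never needs to decide what $\cC$ looks like a priori; it deduces $\cC$ must be a triangle from the induction hypothesis rather than reading it off the quiddity.  If you want to salvage your vertex-by-vertex reconstruction of the base cell, you would need an independent proof that the reconstruction is well-defined --- and proving that looks essentially as hard as the proposition itself, whereas the paper's terminal-cell reduction and contradiction argument give a clean induction without it.

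One further small inaccuracy: you write that the restrictions to the sub-polygons $P_k$ ``are again maximally open 3-periodic dissections.''  They are, but you should note they are in fact maximally open \emph{and} base-open in the sense of Section~\ref{PrThmFMSec}, because in the original dissection each sub-polygon's root cell is a non-base cell and so already obeys the $\Z_3$-edge condition along its base side.  This is harmless for your induction (you only invoke maximal openness), but it is worth knowing that what you are recursing into is the strictly smaller class counted by $P(z,w)$, not the full class counted by $Q(z,w)$.
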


\begin{rem}
It would be natural to try to induct by applying the result to an appropriate sub-dissection, for example, one obtained by removing a single cell of maximal level.  But this does not work, because the quiddity alone does not reveal the cell structure.

Instead we must devise a process which transforms a maximally open 3-periodic dissection into a smaller one, using as input information only the quiddity.  The key is to construct what may be thought of as a quotient dissection.  The blow-up and expansion operations from Section~\ref{BUBA} play a crucial role.  
\end{rem}

\begin{proof}
Let $\cD$ and $\tilde\cD$ be two maximally open 3-periodic dissections of an $(n+2)$-gon which have the same quiddity, and assume the proposition for smaller~$n$.  Label the vertices $0, 1, \ldots, n+1$ and write $(a_0, a_1, \ldots, a_{n+1})$ for the quiddity.  If the quiddity values are all~$1$, both dissections are trivial, so we may dispense with this case.

Refer to cells with no children as \textit{terminal.}  In a non-trivial dissection, a terminal cell is a non-base cell with only one chord side, $(i, j)$.  Here $j-i \ge 2$ and either $0 < i$, or $j < n+1$, or both.  The vertices of the cell are the consecutive integers $i, i+1, \ldots, j$, and their quiddity values satisfy
\begin{equation} \label{QVexp}
   a_i > 1, \quad a_{i+1} = a_{i+2} = \cdots = a_{j-1} = 1, \quad a_{j} > 1.
\end{equation}
In the 3-periodic case, $3$~divides $j-i+1$.

There is always at least one terminal cell; for example, any cell of maximal level.  Let $i, \ldots, j$ be the vertices of a terminal cell of $\tilde\cD$.  Name this cell $\tilde\cC$.

In the dissection $\cD$, we cannot conclude a priori that $i, \ldots, j$ are the vertices of a terminal cell; we only know that their quiddity values are as in~\eqref{QVexp}.  These quiddity values do permit us to conclude that $i, \ldots, j$ are all contained in a single cell of $\cD$.  Name this cell $\cC$.  For illustrative examples, see Figure~\ref{2 openings}.

\medbreak \noindent
\textit{The case $j > i + 2$.}
Here $\cC$ is a $3d$-gon for some $d \ge 2$, and so $j \ge i + 5$.  Because no chords emanate from the vertices $i+1$, $i+2$, and $i+3$, we may create from $\cD$ a new dissection $\cD'$ by erasing them and connecting $i$ to $i+4$ by a single edge.  This is the inverse of the expansion operation: $\cD'$ is a 3-periodic dissection of an $(n-1)$-gon with vertices $0, \ldots, i, i+4, \ldots, n+1$.

The sole difference between $\cD'$ and $\cD$ is that in $\cD'$, the cell $\cC$ is replaced by a cell $\cC'$ obtained by deleting the vertices $i+1$, $i+2$, and $i+3$.  All other cells remain the same, all chords are the same, all base sides are the same, and the levels, $\Z_3$-indices, and quiddity values of all components of $\cD'$ are the same as they were in $\cD$.  In particular, $\cD'$ is maximally open and has quiddity
\begin{equation*}
   (a_0, \ldots, a_i, a_{i+4}, \ldots, a_{n+1}).
\end{equation*}

The same inverse expansion operation may be performed on $\tilde\cD$, producing a 3-periodic dissection $\tilde\cD'$ of an $(n-1)$-gon which differs from $\tilde\cD$ only in that the vertices $i+1$, $i+2$, and $i+3$ are deleted from the terminal cell $\tilde\cC$ to create a new cell $\tilde\cC'$.  As before, everything else remains the same, so $\tilde\cD'$ is also maximally open, and it has the same quiddity as $\cD'$.  Therefore, by the induction hypothesis, $\cD'$ and $\tilde\cD'$ are equal.  Performing expansion surgery on the cells $\cC'$ and $\tilde\cC'$ returns the smaller dissections $\cD'$ and $\tilde\cD'$ to the originals $\cD$ and $\tilde\cD$, so they too are equal.

\medbreak \noindent
\textit{The case $j = i + 2$.}
Here the terminal cell $\tilde\cC$ is a triangle with vertices $i$, $i+1$, and $i+2$.  Let $\tilde\cD'$ be the dissection obtained by removing this cell from $\tilde\cD$.  This is the inverse of the blow-up operation: $\tilde\cD'$ is a 3-periodic dissection of an $(n+1)$-gon with vertices $0, \ldots, i, i+2, \ldots, n+1$.

Clearly the cells, base sides, levels, and $\Z_3$-indices in $\tilde\cD'$ are the same as they were in $\tilde\cD$.  It follows that $\tilde\cD'$ is maximally open and has quiddity
\begin{equation} \label{QVblup}
   (a_0, \ldots, a_{i-1}, a_i - 1, a_{i+2} - 1, a_{i+3}, \ldots, a_{n+1}).
\end{equation}

Now consider the cell $\cC$ in the dissection $\cD$ containing the vertices $i$, $i+1$, and $i+2$.  Suppose that it contains no other vertices.  Then it is a terminal triangle, and we may perform the same inverse blow-up operation, removing $\cC$ from $\cD$ to produce a 3-periodic dissection $\cD'$ which is maximally open and has quiddity~\eqref{QVblup}.  By the induction hypothesis, $\cD' = \tilde\cD'$, and hence $\cD = \tilde\cD$.

Thus the proof will be complete if we derive a contradiction from the assumption that $\cC$ is not a triangle.  Suppose it is an $(r+2)$-cell with vertices $v_0 < \ldots < v_{r+1}$, where $r+2 = 3d$ for some $d \ge 2$.  For some $s \in \{0, 1, \ldots, r-1\}$, we have
\begin{equation*}
   v_s = i, \qquad v_{s+1} = i+1, \qquad v_{s+2} = i+2.
\end{equation*}

Because the quiddity values $a_i$ and $a_{i+2}$ exceed~$1$, the sides $(v_{s-1}, v_s)$ and $(v_{s+2}, v_{s+3})$ of $\cC$ are chords.  Moreover, they have the same $\Z_3$-index, so we may perform 3-periodic surgery on them.

Because $\cD$ is maximally open, this surgery must be a closing surgery: neither of the two chords it removes can be the base side $(v_{r+1}, v_0)$ of $\cC$.  Thus in fact $s \in \{1, \ldots, r-2\}$.

Let $\hat\cD$ be the dissection produced by this closing surgery.  It has the same quiddity as $\cD$ and $\tilde\cD$, and it contains the triangle $(i,\, i+1,\, i+2)$ as a terminal cell created by the surgery.  It is not maximally open: it admits exactly one 3-periodic opening surgery, the inverse of the closing surgery that produced it from $\cD$.

Let $\hat\cD'$ be the dissection obtained from $\hat\cD$ by applying the inverse of the blow-up operation and removing the triangle $(i,\, i+1,\, i+2)$.  It has the same quiddity \eqref{QVblup} as $\tilde\cD'$, and it \textit{is} maximally open, because the only 3-periodic opening surgery admitted by $\hat\cD$ is no longer available.  Hence as before, induction shows that $\hat\cD' = \tilde\cD'$.  But then applying the blow-up operation and reattaching the triangle $(i,\, i+1,\, i+2)$ implies that $\hat\cD = \tilde\cD$, a contradiction, because $\hat\cD$ is not maximally open.
\end{proof}

\begin{figure}[htbp]
\footnotesize
$$
\xymatrix @!0 @R=0.35cm @C=0.6cm
{
&& i+1 && i+2 &
\\
&& \bullet \ar@{-}[ldd] \ar@{-}[rr] && \bullet \ar@{-}[rdd]
\\
\\
\mkern36mu i
& \bullet \ar@{-}[dd] \ar@{-}[rdddd] &&&& \circ \ar@{-}[dd]
\\
&&& \cC &&
\\
& \circ \ar@{-}[rdd] &&&& \bullet \ar@{-}[ldd] \ar@{-}[luuuu]
\\
\\
&& \bullet \ar@{-}[rr] && \square
\\
\\
&&& \cD &&
}
\mkern36mu
\xymatrix @!0 @R=0.35cm @C=0.6cm
{
&& i+1 && i+2 &
\\
&& \bullet \ar@{-}[ldd] \ar@{-}[rr] && \bullet \ar@{-}[rdd]
\\
\\
\mkern27mu i
& \bullet \ar@{-}[dd] \ar@{-}[rrruu] &&&& \circ \ar@{-}[dd]
\\
\\
& \circ \ar@{-}[rdd] &&&& \bullet \ar@{-}[ldd]
\\
\\
&& \bullet \ar@{-}[rrruu] \ar@{-}[rr] && \square
\\
\\
&&& \hat\cD &&
}
\mkern27mu
\xymatrix @!0 @R=0.35cm @C=0.6cm
{
&& i+1 && i+2 &
\\
&& \bullet \ar@{-}[ldd] \ar@{-}[rr] && \bullet \ar@{-}[rdd]
\\
&& \!\!\!\tilde\cC
\\
\mkern36mu i
& \bullet \ar@{-}[dd] \ar@{-}[rrruu] &&&& \circ \ar@{-}[dd]
\\
\\
& \circ \ar@{-}[rdd] &&&& \circ \ar@{-}[ldd]
\\
\\
&& \circ \ar@{-}[rr] && \square
\\
\\
&&& \tilde\cD &&
}
$$
\caption*{$\cD$, $\hat\cD$, and $\tilde\cD$ for $j = i+2$: as usual, hollow dots represent sub-dissections, and the base is contained in the square.}
\normalsize
\end{figure}

\section{Counting quiddities}\label{PrThmFMSec}

In this section we will prove Theorem~\ref{DiffQThm}.  Consider the set of all 3-periodic dissections of an $(n+2)$-gon into $m$ cells.  In light of Theorem~\ref{MainTechThm}, the number of maximally open 3-periodic dissections in this set is equal to the number of distinct quiddities of  all dissections in the set.  Thus the function $Q(z, w)$ defined in~\eqref{Q defn} is the BGF of the maximally open 3-periodic dissections.

In order to enumerate the set of maximally open 3-periodic dissections, we must first enumerate a certain subset of these dissections whose BGF satisfies a recursive functional equation.  This BGF will turn out to be the auxiliary function $P(z, w)$ defined in Section~\ref{BGF Sec}.

\begin{defn}
A maximally open 3-periodic dissection is \textit{base-open} if it remains maximally open when the blow-up operation is applied to its base edge.
\end{defn}

The next result is a corollary of Lemma~\ref{3dS count}.  It is similar to Corollary~\ref{MO}: the only difference is that in Corollary~\ref{MO} the base cell is excepted from the $\Z_3$-condition.  This exception is the reason that the generating function $Q(z, w)$ does not satisfy any recursive functional equation we know of.

\begin{cor} \label{MOBO}
A 3-periodic dissection is maximally open and base-open if and only if in every cell, every non-base side of the same $\Z_3$-index as the base side is an edge.
\end{cor}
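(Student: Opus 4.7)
The plan is to reduce Corollary~\ref{MOBO} to Corollary~\ref{MO} by examining what the blow-up operation on the base edge does to the $\Z_3$-indices. Let $\cD$ be a 3-periodic dissection with base cell $\cC_0$, and let $\cD'$ be the dissection obtained by applying blow-up to the base edge of $\cD$. By the definition of base-open, $\cD$ is base-open precisely when $\cD'$ is maximally open. The cells of $\cD'$ are a new triangle $T$ (which serves as the base cell of $\cD'$), the cell $\cC_0$ (now a non-base cell of level~$1$ whose base side is the chord that was the base edge of $\cD$), and the descendants of $\cC_0$ in $\cD$ (with levels increased by~$1$).

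The key observation I would establish is that the $\Z_3$-indices of the sides of every cell of $\cD$ undergo a uniform shift by some $\delta \in \Z_3$ when passing from $\cD$ to $\cD'$. Here $\delta$ is determined by the $\cD'$-index that $T$ assigns to its chord side (the old base edge), but the exact value of $\delta$ is inessential for what follows. Indeed, the base side of $\cC_0$ shifts by $\delta$ by definition, and the recursive counterclockwise rule defining the $\Z_3$-indices propagates this shift uniformly around the remaining sides of $\cC_0$; since the base sides of its children are sides of $\cC_0$, the shift propagates by induction on the level in $\cD$ to every cell of $\cD$.

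With the uniform-shift claim in hand, applying Corollary~\ref{MO} to $\cD'$ is straightforward. The non-base cells of $\cD'$ are $\cC_0$ together with the non-base cells of $\cD$, and the condition ``every non-base side of the same $\Z_3$-index as the base side is an edge'' refers only to which sides share the base side's index, a property preserved by uniform shifts. Thus for each non-base cell of $\cD$, the condition imposed by $\cD'$ coincides with the condition imposed by $\cD$, while for $\cC_0$ the condition imposed by $\cD'$ translates, via the shift, to the same condition imposed on the base cell of $\cD$ using $\cD$-indices. The triangle $T$, being the base cell of $\cD'$, contributes no constraint.

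Combining these, $\cD$ is both maximally open and base-open if and only if the condition holds at every cell of $\cD$, including the base cell, which is the statement of Corollary~\ref{MOBO}. The main point to verify carefully is the uniform-shift property; once it is established, everything else is a direct unpacking of definitions together with Corollary~\ref{MO}.
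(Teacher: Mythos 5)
Your proposal is correct, and it proceeds in essentially the same spirit as the paper, which simply labels this an ``obvious'' corollary of Lemma~\ref{3dS count} (analogous to Corollary~\ref{MO}) and offers no written proof.  Your uniform-shift observation (the shift is in fact $\delta = +1$, though as you note its value is irrelevant) is exactly the right bookkeeping detail to make the reduction to Corollary~\ref{MO} rigorous, since the base sides of all cells of $\cD$ other than $\cC_0$ are unchanged under blow-up and the condition in question is invariant under a uniform $\Z_3$-shift.
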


\begin{ex}
The 3-periodic dissection in Figure~\ref{MO not BO} is maximally open but not base-open, in contrast with the maximally open 3-periodic dissection in Figure~\ref{2 openings}, which is base-open.
\end{ex}

\begin{prop} \label{P BGF}
The function $P(z, w)$ defined in~\eqref{P recurrence} is the BGF of the maximally open base-open 3-periodic dissections: $P_{n, m}$ is the number of such dissections of an $(n+2)$-gon into $m$ cells.
\end{prop}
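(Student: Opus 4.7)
The plan is to mirror the symbolic decomposition used in the proof of Proposition~\ref{Dissection MGF}, specializing to the subclass of maximally open base-open (henceforth MOBO) 3-periodic dissections and reading off the constraints on the base cell from Corollary~\ref{MOBO}. I~would decompose an arbitrary MOBO 3-periodic dissection of an $(n+2)$-gon according to the size of its base cell and use the MOBO condition both to restrict the base cell's side configuration and to guarantee that the pieces attached along its chord sides are themselves MOBO.

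First, I~would analyze the structure imposed on the base cell. By 3-periodicity, the base cell is a $(3+3d)$-cell for some $d \ge 0$, and the $\Z_3$-indexing rule distributes the indices $0, 1, 2, 0, 1, 2, \ldots$ cyclically around it starting from the base side (of index~$0$), so exactly $d$ of its $2+3d$ non-base sides have index~$0$ and $2+2d$ have index $1$ or~$2$. By Corollary~\ref{MOBO} applied to the base cell itself---this stronger condition on the base cell is precisely what base-openness contributes over ordinary maximal openness---the $d$ non-base sides of index~$0$ are forced to be edges of the polygon, while each of the other $2+2d$ non-base sides is free to be either an edge or a chord.

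Next, I~would establish the key claim: if a chord $c$ of original $\Z_3$-index $j \in \{1, 2\}$ is a non-base side of the base cell, then the sub-dissection it cuts off, viewed with $c$ as the base edge of its sub-polygon, is again a MOBO 3-periodic dissection. Every cell of the sub-dissection other than the one adjacent to $c$ is a non-base cell of the original dissection, so it already satisfies Corollary~\ref{MOBO}; reindexing the sub-dissection to give $c$ index~$0$ simply shifts all $\Z_3$-indices by $-j$ mod~$3$, an operation that preserves the maximal openness condition. Base-openness then follows because the new base cell is the cell $\cC_1$ adjacent to $c$ in the original dissection, which was a non-base cell with $c$ as its base side, so Corollary~\ref{MOBO} already forces every non-base side of $\cC_1$ of original index~$j$ (equivalently, reindexed index~$0$) to be an edge of the sub-polygon.

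Finally, I~would assemble the functional equation by the same bookkeeping as in Proposition~\ref{Dissection MGF}. For each $d \ge 0$, the base cell contributes the factor $w$ and accounts for the $1+3d$ interior polygon vertices lying strictly between $0$ and $n+1$ on its own boundary, yielding $z^{1+3d}$; the $d$ forced-edge non-base sides contribute nothing further; and each of the $2+2d$ remaining non-base sides contributes an independent factor $P(z, w)$ encoding the attached MOBO sub-dissection, with the edge case absorbed into the constant term $P_{0, 0} = 1$. Adding the empty dissection of the $2$-gon as the leading $1$ and summing over $d$ then produces
$$P(z, w) \;=\; 1 + \sum_{d = 0}^{\infty} w z^{1+3d} P^{2+2d} \;=\; 1 + \frac{wz P^2}{1 - z^3 P^2},$$
which is~\eqref{P recurrence}. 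The step I expect to be most delicate is the reindexing argument: one must verify that the cyclic $\Z_3$-index assignment, when restricted to a sub-polygon and recomputed from its new base edge, coincides with the original assignment shifted globally by $-j$ mod~$3$, so that the maximal openness property transfers unchanged to the sub-dissection. Once this is clear, the rest reduces to routine generating-function bookkeeping.
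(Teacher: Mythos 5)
Your proposal is correct and follows essentially the same approach as the paper: decompose by the base cell, invoke Corollary~\ref{MOBO} to force the index-$0$ non-base sides to be edges, and observe that the sub-dissections attached along the remaining chord sides are themselves MOBO. The paper packages your reindexing argument as Lemma~\ref{characterization of MOs and MOBOs} (stated as ``obvious'' without proof) and passes through an intermediate multivariate generating function before specializing to the bivariate level, but these are purely presentational differences.
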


\begin{proof}
Write $\tilde P_{n, m}$ for the number of maximally open base-open 3-periodic dissections of an $(n+2)$-gon into $m$ cells, and let $\tilde P(z, w)$ be the corresponding BGF, $\sum_{n, m} \tilde P_{n, m} z^n w^m$.  We must prove that $\tilde P(z, w) = P(z, w)$.  It will suffice to prove that $\tilde P(z, w)$ satisfies the same recursive equation~\eqref{P recurrence} satisfied by $P(z, w)$:
\begin{equation} \label{tilde P recurrence}
   \tilde P(z, w) = 1 + wz \tilde P^2 + wz^4 \tilde P^4 + wz^7 \tilde P^6 + wz^{10} \tilde P^8 + \cdots.
\end{equation}

The maximally open base-open 3-periodic dissections have an MGF, analogous to the MGF $D(\vecw)$ of the dissections defined in~\eqref{Dissection MGF defn}.  Following the notation of Section~\ref{MGF sec}, let $P_{\vecm}$ be the number of maximally open base-open 3-periodic $\vecm$-dissections.  The MGF is
\begin{equation*}
   P(\vecw) := \sum_{\vecm \in \N^\omega} P_{\vecm} \vecw^{\vecm}.
\end{equation*}

Consider the effect of replacing $w_r$ by $w z^r$ for all $r$.  As we saw in the proof of Proposition~\ref{Periodic BGF}, this replaces $\vecw^{\vecm}$ by $w^{|m|} z^{\|m\|}$, and so by~\eqref{vecm nm}, it replaces $P(\vecw)$ by $\tilde P(z, w)$.  Therefore \eqref{tilde P recurrence} is a consequence of the following recursive functional equation for $P(\vecw)$:
\begin{equation} \label{PMFE}
   P(\vecw) = 1 + \sum_{d=1}^\infty w_{3d-2} P(\vecw)^{2d}.
\end{equation}

To prove \eqref{PMFE} we follow the proof of Proposition~\ref{Dissection MGF}.  Consider a maximally open base-open 3-periodic $\vecm$-dissection such that the base cell is a $3d$-cell for some positive integer~$d$.  To match earlier notation, set $r = 3d - 2$, so that the base cell is an $(r+2)$-cell.  Label its vertices $v_0, \ldots, v_{r+1}$, where
\begin{equation*}
   0 = v_0 < v_1 < \cdots v_{r-1} < v_r < v_{r+1} = n+1.
\end{equation*}
Observe that the $\Z_3$-index of the side $(v_s, v_{s+1})$ is $s+1$.

For $0 \le s \le r$, consider the sub-dissection induced on the sub-polygon with vertices $v_s, v_s + 1, \ldots, v_{s+1} - 1, v_{s+1}$, which is attached to the base cell along the side $(v_s, v_{s+1})$.  Just as in Proposition~\ref{Dissection MGF}, it is an $\vecm(s)$-dissection for some $\vecm(s)$ such that $\|\vecm(s)\| = v_{s+1} - v_s - 1$.  However, there are now two further conditions, arising from the following obvious statement.

\begin{lem} \label{characterization of MOs and MOBOs}
Fix a 3-periodic dissection $\cD$, and regard it as a collection of 3-periodic sub-dissections attached to the sides of its base cell.
\begin{enumerate}
\item[(i)]
$\cD$ is maximally open if and only if each of the sub-dissections is maximally open and base-open.
\smallbreak \item[(ii)]
If $\cD$ is maximally open, then it is in addition base-open if and only if the sub-dissections attached to the sides of the base cell of $\Z_3$-index $0$ are empty.
\end{enumerate}
\end{lem}

The new conditions are that the $\vecm(s)$-dissection attached to $(v_s, v_{s+1})$ is maximally open and base-open, and empty if $s \equiv 2$ mod~3.  Thus the number of maximally open base-open $\vecm$-dissections such that the base is a $3d$-cell is the sum of all products
\begin{equation*}
   \big(P_{\vecm(0)} P_{\vecm(1)}\big) \big(P_{\vecm(3)} P_{\vecm(4)}\big) \cdots
   \big(P_{\vecm(r-4)} P_{\vecm(r-3)}\big) \big(P_{\vecm(r-1)} P_{\vecm(r)}\big),
\end{equation*}
taken over all choices of $v_1, \ldots, v_r$, and for each such choice, over all choices of
the $\vecm(s)$ such that
\begin{equation*}
   \|\vecm(s)\| = v_{s+1} - v_s - 1 \quad \mbox{\rm and} \quad
   \sum_{i=0}^{d-1} \big(\vecm(3i) + \vecm(3i+1)\big) = \vecm - e_r.
\end{equation*}
This number is identical to the coefficient of $\vecw^{\vecm}$ in $w_{3d-2} P(\vecw)^{2d}$, proving \eqref{PMFE} and hence the proposition.
\end{proof}

Evaluating $P(z, w)$ at $w=1$ gives the following corollary.

\begin{cor}
The function $P(z)$ defined in \eqref{P(z) defn} is the UGF of the maximally open base-open 3-periodic dissections: $P_n$ is the number of such dissections of an $(n+2)$-gon.
\end{cor}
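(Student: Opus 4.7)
The plan is to derive this corollary as an immediate specialization of Proposition~\ref{P BGF}. By that proposition, $P(z,w)$ is the BGF of maximally open base-open 3-periodic dissections, so $P_{n,m}$ counts those dissections of an $(n+2)$-gon having exactly $m$ cells. Setting $w = 1$ collapses the bivariate generating function to a univariate one:
\[
P(z,1) = \sum_{n,m \ge 0} P_{n,m}\, z^n = \sum_{n \ge 0} \Bigl(\sum_{m \ge 0} P_{n,m}\Bigr) z^n,
\]
and the inner sum is precisely the total number of maximally open base-open 3-periodic dissections of an $(n+2)$-gon, unrestricted by cell count.

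It remains to verify that the series $P(z,1)$ obtained in this way coincides with the formal series $P(z)$ singled out in~\eqref{P(z) defn}. The recursive functional equation~\eqref{P recurrence} satisfied by $P(z,w)$ reads
\[
P(z,w) = 1 + \frac{w z\, P(z,w)^2}{1 - z^3 P(z,w)^2},
\]
and evaluating at $w = 1$ gives
\[
P(z,1) = 1 + \frac{z\, P(z,1)^2}{1 - z^3 P(z,1)^2},
\]
which is exactly the defining equation~\eqref{P(z) defn}. Since the latter equation admits a unique formal power series solution with constant term~$1$ (each coefficient is determined recursively from the preceding ones), we conclude $P(z,1) = P(z)$, and therefore $P_n = \sum_m P_{n,m}$ is the number of maximally open base-open 3-periodic dissections of an $(n+2)$-gon.

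There is essentially no obstacle here, as the argument is pure bookkeeping once Proposition~\ref{P BGF} is in hand. The only point worth noting is the appeal to the uniqueness of the formal power series solution to~\eqref{P(z) defn}, which is routine but must be invoked to identify the two series.
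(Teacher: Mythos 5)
Your argument is correct and is essentially the same as the paper's: the paper states the corollary immediately after Proposition~\ref{P BGF} with the one-line justification ``Evaluating $P(z,w)$ at $w=1$ gives the following corollary,'' relying on the identification $P(z)=P(z,1)$ already recorded in~\eqref{QP U from B}. Your explicit verification that $P(z,1)$ satisfies~\eqref{P(z) defn} together with uniqueness of the formal power series solution is exactly the bookkeeping behind that identification.
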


As an aside, let us remark that Theorem~\ref{D coefficients} gives an explicit formula for the coefficients $P_{\vecm}$, via essentially the same trick used in Lemma~\ref{tilde D2}.  To explain, use \eqref{DMFE} to check that the function
\begin{equation*}
   D(w_1, 0, w_4, 0, w_7, 0, w_{10}, 0, \ldots)
\end{equation*}
satisfies the same recursive equation \eqref{PMFE} as $P(\vecw)$.  In other words, $P(\vecw)$ is $D(\vecw)$ with $w_{2d}$ replaced by~$0$ and $w_{2d-1}$ replaced by $w_{3d-2}$ for all $d \ge 1$.  This leads to the following statement.

\begin{prop} \label{P MGF}
The coefficient $P_{\vecm}$ of $\vecw^{\vecm}$ in the MGF $P(\vecw)$ is~$0$ unless $\vecm$ is 3-periodic, in which case it is
\begin{equation*}
   P_{\vecm} = \frac{1}{\frac{2}{3} \|\vecm\| + \frac{1}{3} |\vecm| + 1}
   \binom{\frac{2}{3} \|\vecm\| + \frac{4}{3} |\vecm|}
   {\frac{2}{3} \|\vecm\| + \frac{1}{3} |\vecm|,\ \vecm}.
\end{equation*}
\end{prop}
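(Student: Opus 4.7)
The plan is to exploit the substitution identity pointed out just before the statement: the MGF $P(\vecw)$ is obtained from the MGF $D(\vecw)$ of Section~\ref{MGF sec} by the variable substitution $w_{2d-1}\mapsto w_{3d-2}$ and $w_{2d}\mapsto 0$ for all $d\ge 1$. Combined with the closed formula for $D_{\vecm}$ in Theorem~\ref{D coefficients}, this yields the proposition by a direct computation.

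First I would verify the substitution identity. Applying the substitution to both sides of the recursion~\eqref{DMFE}, $D=1+\sum_{r\ge 1}w_rD^{r+1}$, only the odd values $r=2d-1$ survive on the right, producing the recursion $\tilde D=1+\sum_{d\ge 1}w_{3d-2}\tilde D^{2d}$, which is precisely the recursion~\eqref{PMFE} that characterizes $P(\vecw)$. Both series have constant term $1$, and either recursion determines all remaining coefficients uniquely by induction on $|\vecm|$, since the powers $P^{2d}$ appearing on the right involve only coefficients indexed by multi-indices of strictly smaller total weight than $\vecm$. Hence the substituted series equals $P(\vecw)$.

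Next I would read off the coefficients. Given a 3-periodic $\vecm$ (supported on indices of the form $3d-2$), define the companion multi-index $\vecm'$ by $m'_{2d-1}:=m_{3d-2}$ and $m'_{2d}:=0$. The substitution identity gives $P_{\vecm}=D_{\vecm'}$, and forces $P_{\vecm}=0$ whenever $\vecm$ is not 3-periodic (no such $\vecm'$ exists). A short computation using the identity $2(3d-2)+1=3(2d-1)$ yields
\begin{equation*}
|\vecm'|=|\vecm|,\qquad \|\vecm'\|=\frac{2\|\vecm\|+|\vecm|}{3}.
\end{equation*}

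Finally I would plug into Theorem~\ref{D coefficients} and simplify. The prefactor $1/(\|\vecm'\|+1)$ becomes $1/(\tfrac{2}{3}\|\vecm\|+\tfrac{1}{3}|\vecm|+1)$ and the top entry of the multinomial becomes $\|\vecm'\|+|\vecm'|=\tfrac{2}{3}\|\vecm\|+\tfrac{4}{3}|\vecm|$, both matching the proposition. The multinomial $\binom{\cdot}{\|\vecm'\|,\vecm'}$ equals $\binom{\cdot}{\|\vecm'\|,\vecm}$ because the nonzero entries of $\vecm'$ are exactly the nonzero entries of $\vecm$ (just relabelled from $m_{3d-2}$ to $m'_{2d-1}$), while the zero entries contribute $0!=1$ on either side. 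There is no genuine obstacle in the argument; the only step requiring any attention is this last index-matching in the multinomial.
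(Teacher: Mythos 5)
Your proposal is correct and follows exactly the route the paper sketches in the paragraph immediately preceding the proposition: identify $P(\vecw)$ with $D(w_1,0,w_4,0,w_7,0,\ldots)$ via the recursion \eqref{PMFE}, then read off coefficients from Theorem~\ref{D coefficients}. Your computation of $|\vecm'|=|\vecm|$ and $\|\vecm'\|=\tfrac{1}{3}(2\|\vecm\|+|\vecm|)$ and the index-matching in the multinomial are the details the paper leaves implicit, and they check out.
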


Finally we have arrived at the proof of our main result:

\medbreak \noindent \textit{Proof of Theorem~\ref{DiffQThm}.}
We follow the proof of Proposition~\ref{P BGF}.  Define the MGF $Q(\vecw)$ of the maximally open 3-periodic dissections by
\begin{equation*}
   Q(\vecw) := \sum_{\vecm \in \N^\omega} Q_{\vecm} \vecw^{\vecm},
\end{equation*}
where $Q_{\vecm}$ is the number of maximally open 3-periodic $\vecm$-dissections.

Check that replacing $w_r$ by $w z^r$ for all $r$ replaces $Q(\vecw)$ by $Q(z, w)$, just as it replaces $P(\vecw)$ by $\tilde P(z, w)$.  Use this to come down to proving the identity
\begin{equation} \label{QMFE}
   Q(\vecw) = 1 + \sum_{d=1}^\infty w_{3d-2} P(\vecw)^{3d-1}.
\end{equation}

Consider maximally open 3-periodic $\vecm$-dissections such that the base cell is a $3d$-cell.  Everything goes as in the proof of~\eqref{PMFE}, except that only Part~(i) of Lemma~\ref{characterization of MOs and MOBOs} applies, so maximally open base-open 3-periodic $\vecm(s)$-sub-dissections can be attached to every non-base side of the base cell, regardless of $\Z_3$-index.

Therefore the number of maximally open 3-periodic $\vecm$-dissections such that the base cell is a $3d$-cell is the sum of all products $\prod_0^r P_{\vecm(s)}$, taken over the same set \eqref{summation set} as in the proof of Proposition~\ref{Dissection MGF}.  This number is identical to the coefficient of $\vecw^{\vecm}$ in $w_{3d-2} P(\vecw)^{3d-1}$, proving \eqref{QMFE} and hence the theorem.
\hfill $\Box$

\begin{rem}
At the beginning of Section~\ref{MaxOpenSec} we claimed that the set of $\ell$-periodic quiddities does not have a well-defined MGF, because the quiddity does not determine $\vecm$.  While this is true, for $\ell = 3$ one could argue that there is a well-defined MGF, namely, $Q(\vecw)$.  It is in fact possible to obtain an explicit formula for the coefficients of $Q(\vecw)$, by proving multivariate analogs of Proposition~\ref{Gen Per KC} and Lemmas~\ref{tilde D2} and~\ref{Pe}.  However, the result does not seem to be either illuminating or elegant.
\end{rem}

\section{Counting blow-ups of the projective plane} \label{ProjPlane}

This section is an application of the formula \eqref{Qn3Eq} for $Q_{n, n-3}$, the number of quiddities of dissections of an $(n+2)$-gon into $n-4$ triangles and a single hexagon.  This is the $k=1$ case of~\eqref{Qnm formula} in Theorem~\ref{TheMainCountThm}.  As we remarked below~\eqref{Qn3Eq}, it was previously obtained in \cite{Gui}.

Consider the rational surfaces obtained from the projective plane
$\bP^2$ by blowing it up at a finite set of points.
They form an important class of toric surfaces which is useful for many purposes;
see for example \cite{McM, ST}.

In general, the result of blow-ups at $n$ distinct points in $\bP^2$
depends on the order in which the blow-ups occur.
However, different orders may give isomorphic surfaces.
The following theorem enumerates the isomorphism classes obtained
in terms of \eqref{Qn3Eq} and the Catalan numbers~\eqref{C_n}.

\begin{thm}
\label{BUThm}
For $n \ge 1$, the number of isomorphism classes of rational surfaces obtained by blow-up at $n$ points is given by the expression
\begin{equation}
\label{BUEq}
   Q_{n+1, n-2} + (n+3) (2C_n-C_{n-1})
   = \frac{n+4}{n} \binom{2n+1}{n-1}.
\end{equation}
\end{thm}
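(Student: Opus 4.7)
The plan is to separate the geometric content from the algebraic content of the theorem. Geometrically, by the standard toric dictionary referenced in Sections~\ref{RGP} and~\ref{ProjPlane}, smooth complete toric surfaces correspond to complete regular rational fans in~$\R^2$, which in turn correspond to positive solutions of the Conway-Coxeter equation~\eqref{SLMatEqInt}. Iterated blow-ups of~$\bP^2$ at $n$ torus-fixed (including infinitely near) points yield fans with $n+3$ rays; the $k=1$ case of~\eqref{AnyA}, giving dissections of $(n+3)$-gons into one hexagonal cell and $n-3$ triangular cells, captures the configurations in which a distinguished subfan isomorphic to that of~$\bP^2$ remains visible. The quiddities of such dissections are enumerated by $Q_{n+1, n-2}$, via Theorem~\ref{TheMainCountThm}.

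The remaining term $(n+3)(2C_n - C_{n-1})$ should account for blow-up configurations without such a distinguished hexagonal cell. The factor $n+3$ is the number of rays and suggests a marking of one of them, and the factor $2C_n - C_{n-1}$ is a Catalan-like quantity which should arise from an enumeration of marked triangulations of an $(n+2)$-gon (i.e., the $k=0$ case) equipped with an extra datum such as a designated chord or edge. Identifying precisely which toric surfaces contribute to each term is the first main task, and should yield the decomposition on the left-hand side of~\eqref{BUEq}.

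Once this decomposition is established, the remainder is algebraic. I would substitute
\[
Q_{n+1, n-2} = \binom{2n-2}{n-3} + \frac{6}{n+2}\binom{2n-3}{n-4}
\]
from~\eqref{Qn3Eq} (shifted by $n \mapsto n+1$) together with $C_n = \frac{1}{n+1}\binom{2n}{n}$, clear the common denominator $n(n+1)(n+2)$, and check the resulting polynomial identity in factorials using Pascal's rule and standard binomial manipulations. This is routine and can be verified on small values of~$n$ (e.g., $n=2,3,4$ give both sides equal to $15, 49, 168$).

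The main obstacle is the combinatorial identification of the correction term $(n+3)(2C_n - C_{n-1})$: understanding conceptually which blow-ups of~$\bP^2$ contribute to the $Q_{n+1, n-2}$ summand versus the Catalan summand. If this interpretation proves elusive, the theorem can still be obtained by taking the closed form $\frac{n+4}{n}\binom{2n+1}{n-1}$ as known from~\cite{Gui} and combining it with the quiddity count from Theorem~\ref{TheMainCountThm} via the algebraic identity above.
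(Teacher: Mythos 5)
Your outline correctly identifies the high-level shape of the argument --- the $Q_{n+1,n-2}$ summand comes from the fans whose recurrence coefficients $(a_i)$ are all positive, hence correspond to quiddities of 3-periodic dissections of the $(n+3)$-gon with one hexagonal cell --- but the heart of the proof is the part you flag as ``the first main task'' and then leave undone. The paper does not invoke any notion of a ``distinguished subfan isomorphic to that of $\bP^2$''; what actually separates the two summands is much more elementary. Starting from $(-1,-1,-1)$ and applying $n$ blow-ups~\eqref{BUNewEq}, one checks that the resulting $(n+3)$-periodic sequence is of exactly one of four types: \textbf{(a)} all $a_i$ positive; \textbf{(b)} some $a_k=-1$, with $a_{k\pm1}\ge 0$ and the rest positive; \textbf{(c)} some $a_k=a_{k+1}=0$ with the rest positive; \textbf{(d)} a single $a_k=0$ with the rest positive. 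Type (a) is counted by $Q_{n+1,n-2}$ as you say. For each of (b), (c), (d), the position of the non-positive entry can be marked in $n+3$ ways, accounting for your factor of $n+3$, and a deletion/contraction operation reduces each marked sequence to the quiddity of a triangulation, which is counted by a Catalan number via Theorem~\ref{CCthm}: type (b) gives $C_n$ (contract $(\ldots,a_{k-1},-1,a_{k+1},\ldots)\mapsto(\ldots,a_{k-1}+1,a_{k+1}+1,\ldots)$), type (c) gives $C_{n-1}$ (delete the two adjacent zeros), and type (d) gives $C_n-2C_{n-1}$ (contract the zero and count triangulations of the $(n+1)$-gon together with a splitting chord, using the Catalan recursion). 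The sum is $(n+3)(C_n+C_{n-1}+C_n-2C_{n-1})=(n+3)(2C_n-C_{n-1})$. Without this explicit case analysis and the three Catalan lemmas it rests on, the proposal has no proof of the left-hand side of~\eqref{BUEq}.

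As for your fallback: the paper credits~\cite{Gui} only with an independent derivation of the coefficient $Q_{n,n-3}$, not with the blow-up count $\frac{n+4}{n}\binom{2n+1}{n-1}$. If Gui's thesis does not already establish that the number of iterated blow-up classes equals this binomial quantity, then importing the closed form does not close the gap --- you would merely be verifying an algebraic identity between two formulas, not proving the geometric statement. The routine algebraic manipulation you describe (clearing denominators, Pascal's rule, spot-checking $n=2,3,4$) is the easy part and is not where the content of the theorem lies.
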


Here $Q_{n+1, n-2}$ should be interpreted as $0$ for $n = 1, 2$.  The simplified formula on the right is valid only for $n \ge 2$.  The sequence begins $4, 15, 49, 168, 594, 2145, \ldots$.

Before giving the proof we briefly recall rational fans and the blow-up operation.  This material is well-known and can be found in Sections~2.4 and~2.5 of the classical book~\cite{Ful}.

\subsection{Rational fans in~$\R^2$}

Every compact nonsingular toric surface is determined by a complete rational fan.
A \textit{regular complete rational fan} in $\R^2$ is an $N$-periodic sequence
of lattice points $v_i\in\Z^2$ satisfying the following two conditions:
\begin{itemize}
\item
every pair $v_i, v_{i+1}$ of consecutive points forms a basis of~$\Z^2$;
\smallbreak \item
distinct cones $(v_i, v_{i+1})$ and $(v_j, v_{j+1})$
intersect only at~$\{0\}$ or along a bounding ray.
\end{itemize}

We shall write such sequences as $(v_i)_{i\in\Z}$,
where $v_{i+N} = v_i$ for all~$i$.
Clearly there is a sequence $(a_i)_{i\in\Z}$ of integers such that
the $v_i$ satisfy the recurrence relation 
\begin{equation}
\label{RecFanEq}
v_{i+1}=a_iv_i-v_{i-1}
\end{equation}
(cf.~\cite{Ful}, p.~43).
Note that the $a_i$ are not necessarily positive.

Applying an element of $\SL(2,\Z)$, we may assume that
$v_1 = e_1$ and $v_2=e_2$.
Hence the fan is determined up to equivalence by the $a_i$.
The following proposition is obvious (see~\cite{Ful}, Exercises~2.17 and~2.18).

\begin{prop}
\label{EasyTor}
\begin{enumerate}
\item[(i)]
The sequence $(a_i)$ in \eqref{RecFanEq} is a solution of \eqref{SLMatEqInt} (not necessarily positive), with $+\Id$ on the right side.  Its total sum \eqref{MatEq Total Sum} is $T = 3N - 12$.

\smallbreak \item[(ii)]
Conversely, any solution of \eqref{SLMatEqInt} of total sum $3N - 12$ determines a complete rational fan with $v_1 = e_1$ and $v_2 = e_2$.
\end{enumerate}
\end{prop}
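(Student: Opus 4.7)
The plan is to derive both parts from the matrix form of \eqref{RecFanEq}, combined with Noether's formula for the associated toric surface. Setting $M(a) := \begin{pmatrix} a & -1 \\ 1 & 0 \end{pmatrix}$ and viewing each lattice vector $v_i$ as a row, the recurrence \eqref{RecFanEq} reads $\begin{pmatrix} v_{i+1} \\ v_i \end{pmatrix} = M(a_i) \begin{pmatrix} v_i \\ v_{i-1} \end{pmatrix}$, a $2\times 2$ matrix equation. Iterating from $i=1$ to $N$ and invoking the periodicity $v_{N+1} = v_1$, $v_N = v_0$, one obtains $M(a_N) M(a_{N-1}) \cdots M(a_1) \begin{pmatrix} v_1 \\ v_0 \end{pmatrix} = \begin{pmatrix} v_1 \\ v_0 \end{pmatrix}$. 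Since $(v_0, v_1)$ is a $\Z$-basis of $\Z^2$, the $2\times 2$ array on the right is invertible, so the product equals $+\Id$. The involution $J M(a) J = M(a)^{-1}$, with $J := \begin{pmatrix} 0 & 1 \\ 1 & 0 \end{pmatrix}$, satisfies $J (M_1 \cdots M_N) J = (M_N \cdots M_1)^{-1}$, so reversing the order of the factors equals inverting the product; hence $M(a_1) M(a_2) \cdots M(a_N) = +\Id$, which is \eqref{SLMatEqInt} with $+\Id$ on the right.

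The formula $T = 3N - 12$ in (i) follows from Noether's formula applied to the smooth complete toric surface $X$ determined by the fan. Since $X$ is rational, $\chi(\mathcal{O}_X) = 1$, and Noether yields $K_X^2 + \chi_\mathrm{top}(X) = 12$. The Euler characteristic equals the number of torus fixed points, namely $N$, while the torus-invariant divisors $D_i$ satisfy $D_i \cdot D_{i+1} = 1$ for cyclically adjacent indices, $D_i \cdot D_j = 0$ otherwise, and $D_i^2 = -a_i$ directly from \eqref{RecFanEq}. Expanding $K_X^2 = (\sum_i D_i)^2$ using $-K_X = \sum_i D_i$ gives $K_X^2 = -T + 2N$, and Noether then yields $T = 3N - 12$. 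An equivalent elementary route is to check that a fan blow-up (inserting $v_i + v_{i+1}$ between $v_i$ and $v_{i+1}$) raises $N$ by one and $T$ by three, so $3N - T$ is invariant and evaluates to $12$ on the minimal models $\bP^2$ and the Hirzebruch surfaces.

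For (ii), given a solution of \eqref{SLMatEqInt} with $+\Id$, set $v_1 := e_1$, $v_2 := e_2$ and extend to all $i \in \Z$ by \eqref{RecFanEq}. Running the matrix computation of (i) in reverse, the hypothesis $M(a_1) \cdots M(a_N) = \Id$ together with the involution $J \cdot J = \Id$ yields the $N$-periodicity $v_{N+i} = v_i$. The determinant $\det(v_i, v_{i+1})$ starts at $\det(e_1, e_2) = 1$ and is preserved by each $M(a_i)$, so every consecutive pair is a positively oriented $\Z$-basis. It then remains to show that the rays $v_1, \ldots, v_N$ wind exactly once around the origin, so that the simplicial cones $\mathrm{cone}(v_i, v_{i+1})$ tile $\R^2$ without overlap and yield a complete regular fan. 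This is where the hypothesis $T = 3N - 12$ is essential, and it is the main obstacle.

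My preferred approach for this last step is induction on $N$. If some $a_i = 1$, the recurrence forces $v_i = v_{i-1} + v_{i+1}$, so erasing $v_i$ produces a shorter candidate sequence with parameters $N' = N-1$ and $T' = T-3$, hence still satisfying $T' = 3N' - 12$; by induction this shorter sequence defines a complete fan, and reinserting $v_i$ is the blow-up operation of Section~\ref{BUBA}, which preserves that property. If no $a_i$ equals $1$, the resulting (potential) toric surface has no $T$-invariant $(-1)$-curves and so, by the classification of minimal smooth complete toric surfaces, must be $\bP^2$ or some Hirzebruch surface $F_n$; these base cases are verified by direct inspection, which completes the induction.
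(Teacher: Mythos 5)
Your argument for part~(i) is correct and parallels the reference the paper cites: the paper labels this proposition ``obvious'' and points to Fulton~\cite{Ful}, Exercises~2.17--2.18, which are precisely the Noether-formula computation $K_X^2 + \chi_{\mathrm{top}}(X) = 12\chi(\mathcal O_X) = 12$ together with $\chi_{\mathrm{top}} = N$, $D_i^2 = -a_i$, and $K_X = -\sum_i D_i$. Your elementary alternative (invariance of $3N - T$ under blow-up, value $12$ on $\bP^2$ and $\bF_a$) is also sound. The involutive trick $J M(a) J = M(a)^{-1}$ to pass from $M(a_N)\cdots M(a_1) = \Id$ to $M(a_1)\cdots M(a_N) = \Id$ checks out.

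There is, however, a genuine gap in the induction you run for part~(ii). After blow-downs have been exhausted you are left with a solution $(a_i)$ of~\eqref{SLMatEqInt} with $T = 3N' - 12$ in which no $a_i$ equals~$1$, and you wish to conclude $N' \le 4$ by invoking the classification of minimal smooth complete toric surfaces. But at this point in the proof the data $(v_i)$ has not yet been shown to be a fan: you know consecutive pairs are positively oriented $\Z$-bases and the sequence is $N'$-periodic, but you do not yet know the rays wind only once, which is exactly what must be proved. The classification of minimal toric surfaces presupposes a fan. So the argument is circular --- you are using the conclusion (the data is a fan, hence a surface exists) to establish the conclusion (the winding number is~$1$). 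Put differently: your parenthetical ``(potential) toric surface'' flags the issue but does not resolve it, since ``no torus-invariant $(-1)$-curves'' is not a meaningful hypothesis until a surface has been produced.

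To close the gap you need either a non-circular proof that a solution of~\eqref{SLMatEqInt} with $+\Id$, $T = 3N - 12$, and all $a_i \ne 1$ forces $N \le 4$, or, better, a direct proof of the winding-number identity $3N - T = 12w$, valid for every $N$-periodic sequence with $\det(v_i, v_{i+1}) = 1$. The latter can be done, for instance, via the Rademacher function on $\SL(2,\Z)$, or by lifting each $M(a_i)$ canonically to the universal cover of $\SL(2,\R)$ and tracking the central defect of the lifted product; with it in hand, $T = 3N - 12$ gives $w = 1$ immediately, and winding~$1$ together with the unimodularity of consecutive pairs shows the cones $\mathrm{cone}(v_i, v_{i+1})$ partition $\R^2$, i.e., the data is a complete regular fan. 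Absent such an argument your proof of~(ii) is incomplete.
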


\begin{exs}
In all cases, we take $v_1 = e_1$ and $v_2 = e_2$.

\begin{enumerate}
\item[(i)]
The first two types of isomorphism classes of rational fans
are represented by a $3$-periodic fan
with $(a_1, a_2, a_3) = (-1,-1,-1)$
and a one-parameter family of $4$-periodic fans with
$(a_1, a_2, a_3, a_4) = (a,0,-a,0)$, where $a\in\Z$.
The corresponding surfaces are the projective plane $\bP^2$
and the Hirzebruch surface $\bF_a$.
\begin{figure}[!h]
\footnotesize
\begin{subfigure}{.4\textwidth}
$$
\xymatrix @!0 @R=0.8cm @C=1cm
{
& v_2 \ar@{<-}[d]
\\
& 0 \ar@{->}[ld] \ar@{->}[r] & v_1 &
\\
v_3 &
}
$$
\subcaption*{$\bP^2$: $v_3 = -e_1 - e_2$}
\end{subfigure}
\begin{subfigure}{.4\textwidth}
$$
\xymatrix @!0 @R=0.4cm @C=0.5cm
{
&&& v_2 \ar@{<-}[dd]
\\
&&&&&
\\
& v_3 && 0 \ar@{->}[rr] \ar@{->}[ll] && v_1
\\
&&&&&
\\
v_4 \ar@{<-}[rrruu] &&&
}
$$
\subcaption*{$\bF_a$: $v_3 = -e_1$, $v_4 = ae_1 - e_2$}
\end{subfigure}
\addtocounter{figure}{-1}
\normalsize
\end{figure}

\smallbreak \item[(ii)]
The simplest positive solution of \eqref{SLMatEqInt}
with total sum $T = 3N - 12$ is $(a_1,\ldots,a_6)=(1,1,1,1,1,1)$.
It corresponds to the pictured $6$-periodic fan.
By Theorem~\ref{ValRMSthm2}, every positive solution
of \eqref{SLMatEqInt} with $T = 3N - 12$ may be obtained
from this solution by a sequence of blow-ups.
\begin{figure}[!htbp]
\footnotesize
$$
\label{6Fan}
\xymatrix @!0 @R=0.8cm @C=1cm
{v_3 & v_2
\\
v_4 &
0 \ar@{->}[u] \ar@{->}[r] \ar@{->}[rd] \ar@{->}[d] \ar@{->}[l] \ar@{->}[lu] &
v_1 &
\\
& v_5 & v_6
}
$$
\caption*{A fan with $N = 6$: $v_3 = -e_1 + e_2$, $v_4 = -e_1$, $v_5 = -e_2$, $v_6 = e_1 - e_2$}
\normalsize
\end{figure}

\smallbreak \item[(iii)]
An example of a solution with $T = 3N - 6$ is $(a_1, a_2, a_3, a_4) = (1, 2, 1, 2)$.
It gives an anti-periodic quadrilateral of \textit{index}~$\frac{1}{2}$, a ``half-fan''.
By Theorem~\ref{CCthm}, the number of half-fans of $N$ vectors is
the Catalan number~$C_{N-2}$.
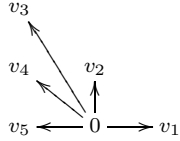
\begin{figure}[!htbp]
\footnotesize
$$
\xymatrix @!0 @R=0.8cm @C=1cm
{
v_3&\\
v_4
&v_2
\\
v_5&
0\ar@{->}[r]\ar@{->}[u]\ar@{->}[luu]\ar@{->}[lu]\ar@{->}[l]&
v_1&
 }
$$
\caption*{A fan with $N = 4$:
$v_3 = -e_1 + 2e_2$, $v_4 = -e_1 + e_2$, $v_5 = -v_1$, $\ldots$
}
\normalsize
\end{figure}

\smallbreak \item[(iv)]
In examples with $T = 3N - 18$ the sequence $(v_i)$ is again anti-periodic, as for $T = 3N - 6$.  Here the index is $\frac{3}{2}$, in the sense that the broken line $(v_i)_{0\leq{}i\leq{}n}$ makes one and a half turns around the origin, as shown.  The number of positive solutions of \eqref{SLMatEqInt} of this type is $Q_{N-2, N-8}$, which is given by~\eqref{Qn6Eq}.
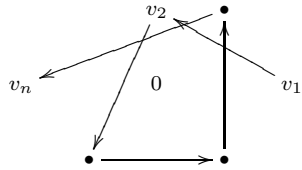
\begin{figure}[!htbp]
\footnotesize
$$
\xymatrix @!0 @R=1cm @C=0.9cm
{
&&v_2\ar@{->}[ldd]&\bullet\ar@{->}[llld]&
\\
v_n&&0&&v_1\ar@{->}[llu]\\
&\bullet&&\bullet\ar@{<-}[ll]\ar@{->}[uu]&
}
$$
\caption*{A fan with $T = 3N - 18$}
\normalsize
\end{figure}

\end{enumerate}
\end{exs}

\subsection{The blow-up operation}

This operation plays a crucial role in the classification of toric surfaces \cite{Ful}.  Recall its combinatorial definition:

\begin{defn}
The \textit{blow-up} of an $N$-periodic fan $(v_i)_{i\in\Z}$ is the $(N+1)$-periodic fan obtained by
inserting the vector $v_k+v_{k+1}$ between $v_k$ and $v_{k+1}$ for some~$k$.
\end{defn}

The corresponding sequence $(a_i)$ changes just as quiddities do under the dissection blow-up operation defined in Section~\ref{BUBA}: $a_k$ and $a_{k+1}$ increase by~$1$, and a~$1$ is inserted between them:
\begin{equation}
\label{BUNewEq}
(\ldots, a_k,\, a_{k+1}, \ldots)
\quad \longmapsto \quad
(\ldots, a_k+1,\, 1,\, a_{k+1}+1, \ldots).
\end{equation}

\medbreak \noindent \textit{Proof of Theorem~\ref{BUThm}.}
The theorem counts the number of distinct fans with $v_1 = e_1$ and $v_2 = e_2$ obtained from the fan of $\bP^2$ by a sequence of blow-ups.  It is not hard to see that the sequence $(a_i)$ obtained by $n \geq 1$ blow-ups of $(-1,-1,-1)$ must be one of the following four mutually exclusive types:

\begin{enumerate}
\item[(a)] 
The $a_i$ are all positive.  Here we must have $n \ge 3$.

\item[(b)] 
There is a $k$ such that $a_k = -1$.  In this case, the neighbors $a_{k-1}$ and $a_{k+1}$ must be non-negative, and the other $a_i$ must all be positive.

\item[(c)] 
There is a $k$ such that $a_k = a_{k+1} = 0$.  In this case, the other $a_i$ must all be positive, and $n \ge 2$.

\item[(d)]
There is a $k$ such that $a_k=0$, and the other $a_i$ are all positive.  Here $n \ge 3$.
\end{enumerate}

We will count the number of sequences of each type separately; combining the results then completes the proof.  Note that after $n$ blow-ups we have an $(n+3)$-periodic sequence.

\begin{itemize}

\item\textit{Type~(a):}
these sequences are the quiddities of 3-periodic dissections of the $(n+3)$-gon into triangles and a single hexagon.  By Theorem~\ref{TheMainCountThm}, there are $Q_{n+1, n-2}$ of them.

\smallbreak \item\textit{Type~(b):}
the following lemma shows that the number of such sequences is $(n+3) C_n$.

\begin{lem} \label{Type B}
The number of solutions of~\eqref{SLMatEqInt} of Type~(b) is $N C_{N-3}$.
\end{lem}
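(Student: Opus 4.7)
The plan is to construct, for each cyclic position $k \in \{1, \ldots, N\}$, an explicit bijection between Type~(b) solutions of length $N$ with $a_k = -1$ and quiddities of triangulations of the $(N-1)$-gon. Theorem~\ref{CCthm} then gives $C_{N-3}$ such quiddities, and summing over the $N$ possible positions of the unique $-1$ yields the claimed $N C_{N-3}$.

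By the cyclic symmetry of \eqref{SLMatEqInt}, I may assume the $-1$ sits at position $k = N$, so that the sequence reads $(b_1, \ldots, b_{N-1}, -1)$ with $b_1, b_{N-1} \ge 0$ and $b_2, \ldots, b_{N-2} > 0$. The map I would use simply absorbs the $-1$ into its two neighbors:
\begin{equation*}
   c_1 := b_1 + 1, \qquad c_{N-1} := b_{N-1} + 1, \qquad c_i := b_i \ \text{for}\ 2 \le i \le N-2.
\end{equation*}
Positivity of each $c_i$ is then immediate from the Type~(b) inequalities, and the total sum jumps from $3N - 11$ to $3N - 9 = 3(N-1) - 6$, which is exactly the total sum of a triangulation quiddity of the $(N-1)$-gon.

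The crux is to verify that $(c_1, \ldots, c_{N-1})$ satisfies the Conway--Coxeter equation with $-\Id$ on the right. Write $M(x) := \left(\begin{smallmatrix} x & -1 \\ 1 & 0 \end{smallmatrix}\right)$. The algebraic blow-up identity
\begin{equation*}
   M(a) M(b) = M(a+1) M(1) M(b+1),
\end{equation*}
which underlies \eqref{BUNewEq} and Lemma~\ref{operations} and is valid for all integers $a, b$, applied with $(a,b) = (b_{N-1}, -1)$, converts the Type~(b) relation
\begin{equation*}
   M(b_1) \cdots M(b_{N-1}) M(-1) = +\Id
\end{equation*}
(the sign is $+\Id$ by Proposition~\ref{EasyTor}(i), since Type~(b) sequences are obtained from the $\bP^2$ fan by sign-preserving blow-ups) into
\begin{equation*}
   M(b_1) \cdots M(b_{N-2}) M(b_{N-1}+1) = M(0)^{-1} M(1)^{-1}.
\end{equation*}
Since $T M(x) = M(x+1)$ for $T := \left(\begin{smallmatrix} 1 & 1 \\ 0 & 1 \end{smallmatrix}\right)$, premultiplying by $T$ gives $M(c_1) \cdots M(c_{N-1}) = T M(0)^{-1} M(1)^{-1}$, and a single $2 \times 2$ computation identifies the right-hand side with $-\Id$.

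By Theorem~\ref{CCthm}, $(c_1, \ldots, c_{N-1})$ is therefore a triangulation quiddity of the $(N-1)$-gon. The construction is manifestly reversible: given such a quiddity, subtracting~$1$ from its first and last entries and appending a $-1$ produces a Type~(b) sequence, as reversing the matrix manipulation confirms. Hence the map is a bijection, and the lemma follows. The only real obstacle is the sign bookkeeping --- keeping careful track of $+\Id$ on the Type~(b) side versus $-\Id$ on the triangulation side, and pinning down the single $2 \times 2$ identity $T M(0)^{-1} M(1)^{-1} = -\Id$; once these are in hand, no additional machinery beyond the blow-up identity is required.
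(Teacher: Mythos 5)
Your proposal uses exactly the same bijection as the paper: absorb the $-1$ into its two neighbors to obtain a totally positive solution of length $N-1$, invoke Theorem~\ref{CCthm}, and multiply by $N$ for the position of the $-1$. The paper simply states ``Check that it produces a positive solution'' where you supply the explicit verification via the blow-up matrix identity $M(a)M(b)=M(a+1)M(1)M(b+1)$ and the conjugation by $T$; your $2\times2$ computations are correct and the argument is sound.
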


\begin{proof}
Consider the following operation on $N$-periodic sequences of Type~(b):
\begin{equation}
\label{BUDualEq}
(\ldots,\;a_{k-1},\;-1,\;a_{k+1},\;\ldots)
\quad\longmapsto\quad
(\ldots,\;a_{k-1}+1,\;a_{k+1}+1,\;\ldots).
\end{equation}
Check that it produces a positive solution of length $N-1$ and total sum $T = 3N-9$.  By Theorem~\ref{CCthm}, such solutions are quiddities of triangulations of the $(N-1)$-gon, of which there are $C_{N-3}$.  The lemma now follows from the fact that there are $N$ choices for the position $k$ of the $-1$ in the original sequence.
\end{proof}

\smallbreak \item\textit{Type~(c):}
the next lemma shows that the number of such sequences is $(n+3) C_{n-1}$.

\begin{lem}
The number of solutions of \eqref{SLMatEqInt} of Type~(c) is $N C_{N-4}$.
\end{lem}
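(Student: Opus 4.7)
The plan is to mimic the proof of Lemma~\ref{Type B}, this time by deletion rather than merging. The key algebraic input is the identity
\[
\begin{pmatrix} 0 & -1 \\ 1 & \phantom{-}0 \end{pmatrix}^2 = -\Id,
\]
so that removing a cyclically adjacent pair of zeros from any solution of~\eqref{SLMatEqInt} changes the matrix product only by sign, and therefore produces another sequence whose matrix product equals $\pm\Id$.

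First I would observe that in a Type~(c) sequence the position $k \in \{1,\ldots,N\}$ of the pair of zeros is uniquely determined, since by hypothesis all entries other than $a_k$ and $a_{k+1}$ are strictly positive. I would then check that the deleted sequence $(a_{k+2}, a_{k+3}, \ldots, a_{k-1})$ (indices mod~$N$) is a positive solution of~\eqref{SLMatEqInt} of length $N-2$, with unchanged total sum $T = 3N - 12 = 3(N-2) - 6$. Thus it is totally positive in the sense of Section~\ref{CoCoS}, and by Theorem~\ref{CCthm} it is the quiddity of a triangulation of an $(N-2)$-gon, of which there are $C_{N-4}$.

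Conversely, for each such triangulation quiddity and each $k \in \{1,\ldots,N\}$, inserting two zeros at cyclic positions $k$ and $k+1$ produces a Type~(c) solution of length $N$; the same scalar identity shows the resulting matrix product is still $\pm\Id$. These two constructions are mutually inverse and yield a bijection between Type~(c) sequences of length $N$ and pairs (triangulation of the $(N-2)$-gon, position $k \in \{1,\ldots,N\}$), giving the count $N \cdot C_{N-4}$. The only verifications required are the elementary sign computation for the two-zero block of matrices and the bookkeeping to confirm that distinct choices of $k$ on the same triangulation produce distinct Type~(c) sequences; neither step presents any real obstacle.
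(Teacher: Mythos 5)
Your proposal is correct and follows essentially the same route as the paper: delete the adjacent pair of zeros to obtain a totally positive solution of length $N-2$ (the scalar identity $\bigl(\begin{smallmatrix} 0 & -1 \\ 1 & 0 \end{smallmatrix}\bigr)^2 = -\Id$ you invoke is precisely what makes the deletion harmless), identify it via the Conway--Coxeter theorem with a triangulation of the $(N-2)$-gon, and multiply by the $N$ choices for the position of the zero pair. The paper's own proof is just the terse version of this, referring back to the Type~(b) argument for the bijection bookkeeping that you spell out.
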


\begin{proof}
Removing the two consecutive $0$'s gives a positive solution of \eqref{SLMatEqInt} of length $N-2$ and total sum $T = 3N-12$.  The remainder of the proof goes as for Lemma~\ref{Type B}.
\end{proof}

\smallbreak \item\textit{Type~(d):}
our final lemma shows that the number of sequences here is $(n+3) (C_{n}-2C_{n-1})$.

\begin{lem}
The number of solutions of \eqref{SLMatEqInt} of Type~(d) is $N (C_{N-3} - 2C_{N-4})$.
\end{lem}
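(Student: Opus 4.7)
The plan is to collapse the $0$ in a Type~(d) solution and reduce to a triangulation quiddity of length $N-2$, parallel to the strategy of the preceding two lemmas but leveraging the identity
\begin{equation*}
M(a)\, M(0)\, M(b) \,=\, -M(a+b), \qquad M(x) := \begin{pmatrix} x & -1 \\ 1 & \phantom{-}0 \end{pmatrix},
\end{equation*}
which one verifies by direct matrix multiplication. First I would take a Type~(d) solution $(a_1,\ldots,a_N)$ with $a_k=0$ and all other entries $\ge 1$, read the cyclic product so that the block $M(a_{k-1})\,M(0)\,M(a_{k+1})$ appears consecutively, and apply the identity to obtain a positive sequence of length $N-2$ whose matrix product is $-\Id$. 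The total sum is preserved at $3N-12 = 3\big((N-2)-2\big)$, so Theorem~\ref{CCthm} identifies such sequences with quiddities of triangulations of an $(N-2)$-gon.

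Next I would set up a bijection: Type~(d) sequences of length $N$ correspond to triples consisting of a position $k \in \{1,\ldots,N\}$ for the $0$, a triangulation quiddity $(c_1,\ldots,c_{N-2})$, and an ordered split $c_{j(k)} = a_{k-1} + a_{k+1}$ with both summands positive, where $j(k)$ is the position of the merged vertex in the $(N-2)$-gon. The inverse just inserts a $0$ between the split parts at position $k$. For each fixed $k$, the number of Type~(d) sequences with $a_k = 0$ equals $\sum_{\text{triangulations}} (c_{j(k)} - 1)$.

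The counting step then uses the cyclic symmetry of the set of triangulations of the $(N-2)$-gon: rotation of the vertex labels permutes triangulations, so $\sum_{\text{triangulations}} c_j$ is the same for every $j$. Combining this with the fact that every triangulation has total sum $3(N-4)$ gives
\begin{equation*}
\sum_{\text{triangulations}} (c_{j(k)} - 1) \,=\, \frac{C_{N-4}\,\big(3(N-4)-(N-2)\big)}{N-2} \,=\, \frac{2(N-5)\,C_{N-4}}{N-2},
\end{equation*}
independently of $k$. Summing over the $N$ choices of $k$ yields $\tfrac{2N(N-5)}{N-2}\,C_{N-4}$. The standard ratio $C_{N-3}/C_{N-4} = 2(2N-7)/(N-2)$ converts this expression to the claimed $N(C_{N-3} - 2C_{N-4})$.

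The main obstacle will be the bookkeeping of cyclic positions: verifying that the map $(a_1,\ldots,a_N) \mapsto \big(k,\,(c_1,\ldots,c_{N-2}),\,(a_{k-1},a_{k+1})\big)$ is genuinely bijective, and that the cyclic symmetry of the set of triangulations really does cancel the dependence of the count on $j(k)$. Once these points are clear, the ratio-of-Catalans simplification is routine.
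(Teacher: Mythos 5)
Your argument is correct, but it takes a genuinely different route from the paper's at the counting step. Both proofs begin identically: collapse the $0$ (via $M(a)M(0)M(b)=-M(a+b)$, which the paper leaves implicit) and invoke Theorem~\ref{CCthm} to recognize the result as a triangulation quiddity of the $(N-2)$-gon, then observe that reconstructing the Type~(d) sequence amounts to choosing an ordered positive split of the merged quiddity value. From there the paper observes that an ordered split of $c_{j}$ corresponds to choosing one of the $c_j-1$ chords at vertex $j$, i.e.\ to cutting the triangulation into two sub-triangulations along a chord through $j$; the count for fixed $k$ is therefore $\sum_{i=1}^{N-5} C_i C_{N-4-i}$, and the Catalan convolution $C_{N-3}=\sum_{i=0}^{N-4} C_i C_{N-4-i}$ gives $C_{N-3}-2C_{N-4}$ immediately. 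You instead compute $\sum_{\text{triangulations}}(c_{j}-1)$ directly by a rotational-averaging argument: since cyclic rotation permutes the $C_{N-4}$ triangulations and every triangulation has total quiddity sum $3(N-4)$, the expected quiddity at any fixed vertex is $3(N-4)/(N-2)$, giving $\frac{2(N-5)}{N-2}C_{N-4}$ per position, and the ratio $C_{N-3}/C_{N-4}=2(2N-7)/(N-2)$ converts this to $C_{N-3}-2C_{N-4}$. Your approach substitutes a symmetry-plus-arithmetic computation for the paper's structural bijection with chord splittings; the paper's version avoids the averaging step and the ratio manipulation and is a bit tighter, but yours is a legitimate and self-contained alternative. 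The only small caveat is that the rotational-symmetry step deserves one explicit line (the map $T\mapsto\sigma(T)$ is a bijection on triangulations carrying the quiddity value at $j$ to that at $\sigma(j)$), which you flag but do not spell out.
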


\begin{proof}
If $(\ldots,\; a_{k-1}, \;0, \; a_{k+1}, \;\ldots)$
is a solution of Type~(d), then
$(\ldots,\; a_{k-1} + a_{k+1},\; \ldots)$ is a positive solution.
By Theorem~\ref{CCthm}, it is the quiddity of a triangulation of the $(N-2)$-gon.

Conversely, given a quiddity $(a_1, \ldots, a_k, \ldots, a_{N-2})$ of a triangulation of the $(N-2)$-gon such that $a_k \ge 2$, any sequence
$$
   (a_1, \ldots, a'_k, 0, a''_k, \ldots, a_{N-2})
$$
with $a'_k + a''_k = a_k$ and $a'_k$, $a''_k$ positive corresponds to splitting the triangulation into two triangulations along a chord.  Therefore the number of solutions with $0$ at a fixed position is $\sum_{i=1}^{N-5} C_i C_{N-4-i}$.  By the quadratic recurrence equation for the Catalan numbers, this is $C_{N-3} - 2C_{N-4}$.  The lemma now follows as before.
\end{proof}

\end{itemize}

\begin{rem}
The operation \eqref{BUDualEq} used for Type~(b) has an inverse:
$$
   (\ldots,\;a_{k},\;a_{k+1},\;\ldots)
   \quad\longmapsto\quad
   (\ldots,\;a_{k}-1,\;-1,\;a_{k+1}-1,\;\ldots).
$$
This is a ``negative version'' of the blow-up operation~\eqref{BUNewEq}.
In terms of the sequence $(v_i)$, it inserts the vector $v_{k+1}-v_k$ between $v_k$ and $v_{k+1}$ and reverses the sign of the subsequent vectors, giving
$$
   (\ldots,\;v_{k},\;v_{k+1}-v_k,\;-v_{k+1},\;-v_{k+2},\;\ldots).
$$
\end{rem}

\begin{exs}
We conclude with descriptions of the sequences counted by Theorem~\ref{BUThm} for $n \le 4$.

\begin{itemize}

\item\textit{$n=1$, 4-periodic sequences:}
blowing up $\bP^2$ at one point gives $4$ sequences $(a_i)$, all of Type~(b):
the cyclic permutations of $(-1, 0, 1, 0)$.  The corresponding fans are
\begin{align*}
   & \big\{e_1,\ e_2,\ -e_1,\ -e_1 - e_2 \big\}, \qquad
   & \big\{e_1,\ e_2,\ -e_1 - e_2,\ -e_2 \big\}, \\[6pt]
   & \big\{e_1,\ e_2,\ -e_1,\ e_1 - e_2 \big\}, \qquad
   & \big\{e_1,\ e_2,\ -e_1 + e_2,\ -e_2 \big\}.
\end{align*}

\medbreak \item\textit{$n=2$, 5-periodic sequences:}
blowing up at two points gives $15$ sequences, $10$ of Type~(b) and $5$ of Type~(c), the cyclic permutations of the following:
\begin{itemize}
\item{Type~(b):}
$(-1, 0, 2, 1, 1)$, $(-1, 1, 1, 2, 0)$;
\smallbreak \item{Type~(c):}
$(0, 0, 1, 1, 1)$.
\end{itemize}

\medbreak \item\textit{$n=3$, 6-periodic sequences:}
blowing up at three points gives $49$ sequences: $1$ of Type~(a), $30$ of Type~(b), $12$ of Type~(c), and $6$ of Type~(d), the cyclic permutations of the following:
\begin{itemize}
\item{Type~(a):}
$(1, 1, 1, 1, 1, 1)$;
\smallbreak \item{Type~(b):}
$(-1, 0, 2, 2, 1, 2)$, $(-1, 2, 1, 2, 2, 0)$, \\ \phantom{Type~(b)\ }
$(-1, 0, 3, 1, 2, 1)$, $(-1, 1, 2, 1, 3, 0)$, $(-1, 1, 1, 3, 1, 1)$;
\smallbreak \item{Type~(c):}
$(0, 0, 1, 2, 1, 2)$, $(0, 0, 2, 1, 2, 1)$;
\smallbreak \item{Type~(d):}
$(0, 1, 1, 2, 1, 1)$.
\end{itemize}

\medbreak \item\textit{$n=4$, 7-periodic sequences:}
blowing up at four points gives $168$ sequences: $7$ of Type~(a), $7 \cdot 14$ of Type~(b), $7 \cdot 5$ of Type~(c), and $7 \cdot 4$ of Type~(d).  We will not list representatives of each cyclic permutation class, but let us give the Type~(d) cases:
$$
   (0, 1, 1, 2, 2, 1, 2), \quad
   (0, 2, 1, 2, 2, 1, 1), \quad
   (0, 1, 2, 1, 3, 1, 1), \quad
   (0, 1, 1, 3, 1, 2, 1).
$$
There are four of them because $4 = C_4 - 2C_3 = C_1 C_2 + C_2 C_1$.  They correspond, respectively, to the divisions of triangulations of the pentagon indicated by the figure.

\begin{figure}[htbp]
\begin{subfigure}{.2\textwidth}
$$
\xymatrix @!0 @R=0.50cm @C=0.5cm
{
&&201\ar@{-}[rrd]\ar@{-}[lld]\ar@{-}[lddd]\ar@{=}[rddd]&
\\
1\ar@{-}[rdd]&&&& 1\ar@{-}[ldd]\\
\\
&2\ar@{-}[rr]&& 2
}
$$
\end{subfigure}
\begin{subfigure}{.2\textwidth}
$$
\xymatrix @!0 @R=0.50cm @C=0.5cm
{
&&102\ar@{-}[rrd]\ar@{-}[lld]\ar@{=}[lddd]\ar@{-}[rddd]&
\\
1\ar@{-}[rdd]&&&& 1\ar@{-}[ldd]\\
\\
&2\ar@{-}[rr]&& 2
}
$$
\end{subfigure}
\begin{subfigure}{.2\textwidth}
$$
\xymatrix @!0 @R=0.50cm @C=0.5cm
{
&&101\ar@{-}[rrd]\ar@{-}[lld]\ar@{=}[lddd]&
\\
1\ar@{-}[rdd]&&&& 2\ar@{-}[ldd]\ar@{-}[llldd]\\
\\
&3\ar@{-}[rr]&& 1
}
$$
\end{subfigure}
\begin{subfigure}{.2\textwidth}
$$
\xymatrix @!0 @R=0.50cm @C=0.5cm
{
&&101\ar@{-}[rrd]\ar@{-}[lld]\ar@{=}[rddd]&
\\
2\ar@{-}[rdd]\ar@{-}[rrrdd]&&&& 1\ar@{-}[ldd]\\
\\
&1\ar@{-}[rr]&& 3
}
$$
\end{subfigure}
\end{figure}

\end{itemize}
\end{exs}

\end{document}